\def\ci{\protect{\perp\!\!\!\!\!\perp}}
\def\fdd{\stackrel{\textnormal{fdd}}{\Rightarrow}}
\def\d{\mathrm{d}}
\def\zero{{\bf{z}}}
\def\dualoperator{\vartheta}
\def\shift{\theta}
\def\C{{\mathbb{C}}}
\def\E{{\mathbb{E}}}
\def\H{{\mathbb{H}}}
\def\L{{\mathbb{L}}}
\def\N{{\mathbb{N}}}
\def\P{{\mathbb{P}}}
\def\Q{{\mathbb{Q}}}
\def\R{{\mathbb{R}}}
\def\Z{{\mathbb{Z}}}
\def\cF{{\mathcal F}}
\def\cG{{\mathcal G}}
\def\cH{{\mathcal H}}
\def\cL{{\mathcal L}}
\def\cM{{\mathcal M}}
\def\cP{{\mathcal P}}
\def\cz{\hat {\mathcal Z}}
\def\cT{{\hat T}}
\def\cV{{\mathcal V}}
\def\cZ{{\mathcal Z}}
\def\X{{\mathcal X}}
\def\eps{\varepsilon}
\def\bTm{{\widetilde T}^{-1}}
\newcommand{\Indicator}[1]{\mathbbm{1}\left(#1\right)}
\newcommand{\stub}[1]{
	\draw (-.1,-.1+#1) -- (.1,.1+#1);
	\draw (.1,-.1+#1) -- (-.1,.1+#1);
}
\newcommand{\laststub}[1]{
	\stub{#1}
	\draw[line width=2pt] (0,0) -- (0,#1);
}
\theoremstyle{plain}
\newtheorem{lemma}{Lemma}[section]
\newtheorem{theorem}[lemma]{Theorem}
\newtheorem{prop}[lemma]{Proposition}
\newtheorem{corollary}[lemma]{Corollary}
\newenvironment{assumption}[1]
{\innerassumption}
{\endinnerassumption}
\newenvironment{theorembis}[1]
{\innertheorembis}
{\endinnertheorembis}
\theoremstyle{remark}
\newtheorem{remark}[lemma]{Remark}
\title[Height and contour processes of Crump-Mode-Jagers forests (II)]{Height and contour processes of Crump-Mode-Jagers forests (II): The Bellman--Harris universality class}
\author{Emmanuel Schertzer}
\address{LPMA/UMR 7599\\Universit\'e Pierre et Marie Curie (P6) -- Bo\^ite courrier 188\\75252 PARIS Cedex 05 (FRANCE)}
\email{emmanuel.schertzer@upmc.fr}
\author{Florian Simatos}
\address{ISAE SUPAERO and Universit\'e de Toulouse\\10 avenue Edouard Belin\\31055 Toulouse Cedex 4\\France}
\email{florian.simatos@isae.fr}
\date{\today}
\numberwithin{equation}{section}
\begin{document}

\begin{abstract}
	Crump--Mode--Jagers (CMJ) trees generalize Galton--Watson trees by allowing individuals to live for an arbitrary duration and give birth at arbitrary times during their life-time. In this paper, we exhibit a simple condition under which the height and contour processes of CMJ forests belong to the universality class of Bellman--Harris processes. This condition formalizes an asymptotic independence between the chronological and genealogical structures. We show that it is satisfied by a large class of CMJ processes and in particular, quite surprisingly, by CMJ processes with a finite variance offspring distribution. Along the way, we prove a general tightness result.
\end{abstract}

\maketitle

\setcounter{tocdepth}{1}
\tableofcontents
\listoffigures

\newpage

\section{Introduction and presentation of results in the non-triangular case}

\subsection{Crump-Mode-Jagers forests} The subject of the present paper is the study of the height and contour processes of planar Crump--Mode--Jagers (CMJ) forests, which are random instances of \emph{chronological forests}. Chronological trees generalize discrete trees in the following way. Each individual $u$ is endowed with a pair $(V_u, \cP_u)$ such that:
\begin{enumerate}
	\item[(1)] $\cP_u$ is a point measure on $(0,\infty)$ where atoms represent the age of $u$ at childbearing, so that the mass $\lvert \cP_u \rvert$ of $\cP_u$ is the total number of children of $u$;
	\item[(2)] $V_u \in (0,\infty)$ represents the life-length of $u$ and satisfies $\cP_u(V_u, \infty) = 0$, i.e., individuals produce their offspring during their life-time.
\end{enumerate}

As noted by Lambert~\cite{Lambert10:0}, a chronological tree can be regarded as a tree satisfying the rule ``edges always grow to the right''. This is illustrated in Figures~\ref{fig:sequential-construction-sticks} and~\ref{fig:sequential-construction} where we present the sequential construction of a planar chronological forest from a sequence of ``sticks'' $\omega=(\omega_n, n\geq0)$, where $\omega_n = (V_n, \cP_n)$. For $n \geq 1$, $(V_n, \cP_n)$ describes the life of the $n$th individual according to the lexicographic order.

A CMJ forest is obtained when the initial sequence of sticks $((V_n, \cP_n), n \geq 0)$ is i.i.d.. In this introduction we aim to present our main results in a concise way. In particular, we explain our main results (corresponding to Theorems~\ref{thm:tightness}--\ref{thm:extended-BH} below) in the non-triangular setting where the common law of the $(V_n, \cP_n)$, denoted by $(V^*, \cP^*)$, is independent of the scaling parameter $p$ and satisfies $\E(\lvert \cP^* \rvert) = 1$. Except for Theorem~\ref{thm:finite-variance}, these results are extended to the triangular case in Theorems~\ref{thm:tightness'},~\ref{thm:height'},~\ref{thm:contour'} and~\ref{thm:extended-BH'} below. Moreover, our results involve some complex objects (such as the spine process) which are defined next only informally: formal and more cumbersome definitions are provided in Section~\ref{sec:notation}.

\subsection{Height, spine and contour of a CMJ forest}

The chronological height, spine and contour processes introduced now are illustrated on Figures~\ref{fig:y-spine} and~\ref{fig:chronological-processes}.

We define the chronological height process at time $n$, denoted $\H(n)$, as the date of birth of the $n$th individual in the forest. The chronological height can be obtained by summing up the ``chronological contribution'' of each ancestor along the ancestral line associated with $n$. To formalize this statement, we consider the spine at time $n$, denoted $\Pi(n)$, which is the measure recording each of those contributions along the spine. In particular, we have the relation $\H(n) = \lvert \Pi(n) \rvert$ with $\lvert \nu \rvert$ the mass of a measure $\nu$.

For the contour process, we follow Duquesne~\cite{Duquesne:0} (with the difference that we consider c\`adl\`ag instead of c\`agl\`ad coding functions) and consider an exploration particle traveling along the edges of the forest from left to right with the following convention: the particle travels at infinite speed when going downward and at unit speed when going upward (see Figure~\ref{fig:chronological-processes}). Usually, the contour process at time~$t$, denoted $\C(t)$, is defined as the distance of the particle to the root. Here we rather encode the chronological contour process by two coordinates and write $\C(t) = (\C^{\star}(t), \X(t))$ with $\C^\star(t)$ the chronological height of the individual visited at time $t$ by the exploration particle and $\X(t)$ the position of the exploration particle relatively to the edge currently visited. In particular, the distance of the particle to the root at time $t$ (i.e., the value of the classical contour process) is given by $\C^\star(t) + \X(t)$. The motivation for this decomposition is that we then have the explicit expression
\begin{equation} \label{eq:expression-C}
	\C^\star(t) = \H \circ \cV^{-1}(t) \ \text{ and } \ \X(t) = t - \cV(\cV^{-1}(t)-), \ t \geq 0,
\end{equation}
where $\cV$ is the renewal process $\cV(n) = V_0 + \cdots + V_n$ and $\cV^{-1}$ is its right-continuous inverse, see Figure~\ref{fig:chronological-processes}.

Moreover, we will denote by $\cH$ the genealogical height process defined similarly as above but from the genealogical forest constructed out of the sequence of sticks $(1, \lvert \cP_i \rvert \epsilon_1)$ where sticks are scaled to unit size and all atoms are gathered at the end (here and in the sequel, $\epsilon_x$ denotes the Dirac measure at $x$).

To summarize, we will consider the following processes:
\begin{description}
	\item[Genealogical process] $\cH$ denotes the \textit{genealogical} height process which encodes the forest constructed out of the sequence $(1, \lvert \cP_i \rvert \epsilon_1)$ -- see Figure~\ref{fig:genealogical-processes};
	\item[Chronological processes] $\H$ and $\C = (\C^\star, \C)$ denote the \textit{chronological} height and contour processes, respectively, and $\Pi =(\Pi(n), n\geq0)$ denotes the spine process.
\end{description}

\subsection{General tightness result}
\label{choice:scaling}

Our first main result is that, under the standard assumptions~\ref{ass-G},~\ref{ass-C1} and~\ref{ass-C2} below, there exists a ``good'' scaling of the chronological height and contour processes, i.e., one that leads to tight sequences with non-degenerate accumulation points. These assumptions are related to the tail of random variables $T(1)$ and $R(1)$ where, for a given ``typical'' individual $x$:
\begin{itemize}
	\item $T(1)$ is the lexicographic distance to her first ancestor;
	\item $R(1)$ is the age of her parent when begetting $x$.
\end{itemize}
Formally, those random variables are defined through the ladder height process associated to the Lukasiewicz path 
\[ S(n) = \sum_{i=0}^{n-1} (|\cP_i| -1) \]
and their joint law is described in more details in~\eqref{eq:F} below. Assumption~\ref{ass-G} concerns the genealogical structure, and Assumptions~\ref{ass-C1} and~\ref{ass-C2} the chronological structure.

\begin{assumption} {\text{\textnormal{G}}} \label{ass-G}
	We say that Assumption~\ref{ass-G} holds if $\E( \lvert \cP^* \rvert^2)<\infty$ or if $\lvert \cP^* \rvert$ is in the domain of attraction of a $\gamma$-stable distribution with $\gamma\in(1,2)$.
\end{assumption}

\begin{assumption} {\text{\textnormal{C1}}} \label{ass-C1}
	We say that Assumption~\ref{ass-C1} holds if $R(1)$ is in the domain of attraction of a $\beta$-stable distribution with $\beta \in (0,1)$.
\end{assumption}

\begin{assumption} {\text{\textnormal{C2}}} \label{ass-C2}
	We say that Assumption~\ref{ass-C2} holds if $V^*$ is in the domain of attraction of an $\alpha$-stable law with $\alpha\in(0,1)$.
\end{assumption}

Actually, our results also apply when $R(1)$ and $V^*$ have finite means. However, this case has already been treated in~\cite{Schertzer:0} and leads to somewhat degenerate limits within the framework of the present paper (deterministic subordinators), see Section~\ref{sect:discussion} for more details.

Let $X,Y,Z$ be stable laws with respective Laplace exponents $\lambda^\gamma, \lambda^\beta, \lambda^\alpha$. Under Assumptions~\ref{ass-G},~\ref{ass-C1} and~\ref{ass-C2}, there exist vanishing scaling sequences $(g_p)$, $(c_p)$ and $(v_p)$ such that 
\[g_p \sum_{i=1}^p (|\cP_i| -1) \Rightarrow X, \ c_p \sum_{i=1}^{[1/g_p]} R(i) \Rightarrow Y \ \text{ and } \ v_p\sum_{i=1}^{p} |V_i| \Rightarrow Z. \]
More precisely, these scaling sequences can be written
\[g_p = p^{-1/\gamma} \ell(p), \ c_p = g_p^{1/\beta} \ell'(p) \ \mbox{and} \ v_p \ = \ p^{-1/\alpha} \ell''(p)\] 
with $\gamma \in (1,2]$ ($\gamma = 2$ corresponding to the finite variance case) and $\ell,\ell'$ and $\ell''$ slowly varying functions. Armed with these scaling sequences we then scale the renewal process $\cV$ and the height and contour processes as follows:
\[ \cV_p(t) = v_p \cV([pt]), \ \ \H_p(t) = c_p \H([pt]), \ \ \C^\star_p(t) = c_p \C^{\star}(pt) \ \ \text{ and } \ \ \X_p(t) = v_p \X(pt) \]
for $t \geq 0$. Note in particular that the two coordinates of the contour do not have the same scaling, and we will actually show that, under general assumptions, we have $c_p \gg v_p$, which informally corresponds to the fact that the edge currently visited is much longer than the height of its bottom point.

Finally $\Pi_p$ will denote the spine process when time is scaled by $p$, space by $g_p$ and mass by $c_p$: $\Pi_p(t)([0,x]) = c_p \Pi([pt])([0,x/g_p])$ (recall that $\Pi(n)$ is a measure). The process $\Pi_p$ will be seen as a path-valued process, see Section~\ref{sub:snake} for more details and in particular for the topology considered.

\begin{theorembis} {A} \label{thm:tightness}
	Under Assumptions~\ref{ass-G},~\ref{ass-C1} and~\ref{ass-C2} and the above scaling, the sequence $(\Pi_p, \cV_p)$ is tight. Moreover, let $(\Pi_\infty, \cV_\infty)$ be any accumulation point and for $t \geq 0$ define $\H_\infty(t) = \lvert \Pi_\infty(t) \rvert$, $\C^\star_\infty(t) = \H_\infty \circ \cV_\infty^{-1}(t)$ and $\X_\infty(t) = t - \cV_\infty(\cV_\infty^{-1}(t)-)$. Then:
	\begin{enumerate}
		\item $\Pi_\infty$ is almost surely continuous, and $\cV_\infty$ is a subordinator with Laplace exponent~$\lambda^\alpha$;
		\item for every $t \geq 0$, $\H_\infty(t)$ and $\C^\star_\infty(t)$ are almost surely finite and strictly positive;
		\item if $(\Pi_p, \cV_p) \Rightarrow (\Pi_\infty, \cV_\infty)$ along a subsequence, then for any finite set $I \subset \R_+$ we have
		\begin{equation} \label{eq:full-conv}
			\left(\Pi_p, \cV_p, (\H_p(t))_{t \in I}, (\C^\star_p(t))_{t \in I}, \X_p \right) \Rightarrow \left(\Pi_\infty, \cV_\infty, (\H_\infty(t))_{t \in I}, (\C^\star_\infty(t))_{t \in I}, \X_\infty \right)
		\end{equation}
		along the same subsequence.
	\end{enumerate}
\end{theorembis}

In the Galton--Watson case, it is well-known that the height and contour processes are related by a deterministic time change~\cite{Duquesne02:0} (as is suggested by the above result and was shown in~\cite{Schertzer:0}, this continues to hold as soon as $V^*$ has finite mean). The convergence~\eqref{eq:full-conv} thus states that a similar result holds for general CMJ's, where the first coordinate of the contour is obtained by a time change of $\H_\infty$ expressed in terms of a subordinator $\cV_\infty$.

\subsection{The Bellman--Harris case}\label{BH-description}

In the Bellman--Harris case, an individual gives birth at her death and the number of children is independent from the life length, i.e., $\cP_u = \xi_u \epsilon_{V_u}$ with $\epsilon_x$ the Dirac measure at $x$ and $\xi_u$ an integer-valued random variable independent from $V_u$. Bellman--Harris branching processes have received considerable attention in the literature, in part owing to the fact that they are the simplest tractable non-Markovian branching processes.

The chronological tree corresponding to a Bellman--Harris process can be obtained by putting i.i.d.\ marks (distributed as $V^*$) on the edges of the corresponding genealogical tree, that can be seen as stretching factors. From this viewpoint, these trees are particular cases of branching random walks which are obtained in a similar manner but without the positivity constraint on the marks.

Upon normalization, it is known since Aldous~\cite{Aldous93:0}, see also~\cite{Duquesne02:0, Gittenberger98:0, Marckert03:0}, that the genealogical tree converges to a random tree called L\'evy tree (the famous Brownian continuous random tree in the case of offspring distribution with finite variance).

Given the above relation between the chronological and genealogical trees in the Bellman--Harris case, in this case it is natural to expect the CMJ tree to converge toward a random tree obtained by marking the limiting genealogical L\'evy tree and then using these marks to stretch the corresponding portion of the tree. This intuition has been carried out for branching random walks with finite variance offspring distribution by Marckert and Mokkadem~\cite{Marckert03:1}, see also~\cite{Gittenberger03:0, Janson05:0}. Formally, the height process of the limiting tree is then described as the terminal value of a Brownian snake.

\subsection{Main result: the Bellman--Harris universality class}

Our second main result is a generalization of the above picture for a much wider class of CMJ forests. In the Bellman--Harris case, the Brownian snake arises because of the independence between $V^*$ and $\lvert \cP^* \rvert$, i.e., between the chronological and genealogical structures. In this paper we identify general asymptotic independence conditions (\eqref{eq:independence-condition} and~\eqref{eq:independence-condition-2} below) under which the limiting height process is, as in the Bellman--Harris case, the terminal value of a snake.

\subsubsection{The $\lambda^\gamma/\lambda^\beta$ snake} As in~\cite{Gittenberger03:0, Janson05:0, Marckert03:1} this snake is a Brownian snake in the case of finite variance offspring distribution but in the case $\gamma \in (1,2)$, it will be a snake whose lifelength process is the height process associated to the L\'evy process with Laplace exponent $\lambda^\gamma$ and whose spatial displacement is given by a subordinator with Laplace exponent $\lambda^\beta$: this snake will be called the $\lambda^\gamma/\lambda^\beta$-snake, see Section~\ref{sub:snake} and~\cite{Duquesne02:0} for more details.

Informally, this object is the continuum counterpart of the description of the Bellman--Harris forests spelled out in the last paragraph of Section \ref{BH-description}. It can be generated by considering a $\gamma$-stable L\'evy tree, which encodes the genealogy, and by marking the tree according to a Poisson point process with intensity measure $\d\lambda \times \Indicator{x > 0} x^{-\beta} \d x$ where $\lambda$ denotes the branch-length measure on the real tree~\cite{Evans08:0}. In particular, the first coordinate of each point of the Poisson point process is a location in the tree, whereas the second coordinate is a positive real number interpreted as a mark (or stretching factor in the spirit of the previous section). The spine process $\Pi_\infty(t)$ associated to the snake is then obtained by considering the marks lying along the ancestral line associated to the point $t$. See Section~\ref{sub:snake} for a definition.

\subsubsection{The Bellman--Harris universality class} The conditions that generalize the independence condition in the Bellman--Harris case are:
\begin{equation} \label{eq:independence-condition}
	\P \left( c_p R(1) \geq \varepsilon \mid T(1) \geq \delta p \right) \to 0 \ \text{ for every } \ \varepsilon, \delta > 0 \tag{IC1}
\end{equation}
and
\begin{equation} \label{eq:independence-condition-2}
	\P \left( \lvert \cP^* \rvert \geq \varepsilon p g_p \mid v_p V^* \geq \delta \right) \to 0 \ \text{ for every } \ \varepsilon, \delta > 0. \tag{IC2}
\end{equation}
In the Bellman--Harris case the independence structure implies that
\[ \P \left( c_p R(1) \geq \varepsilon \mid T(1) \geq \delta p \right) = \P \left( c_p R(1) \geq \varepsilon \right) \]
and
\[ \P \left( \lvert \cP^* \rvert \geq \varepsilon p g_p \mid v_p V^* \geq \delta \right) = \P \left( \lvert \cP^* \rvert \geq \varepsilon p g_p \right) \]
so that~\eqref{eq:independence-condition} and~\eqref{eq:independence-condition-2} are indeed satisfied as $c_p \to 0$ while $p g_p \to \infty$.

\begin{theorembis} {B} \label{thm:height}
	If Assumptions~\ref{ass-G},~\ref{ass-C1} and~\ref{ass-C2} are satisfied and the asymptotic independence condition~\eqref{eq:independence-condition} holds, then $\Pi_p \Rightarrow \Pi_\infty$ where $\Pi_\infty$ is the $\lambda^\gamma/\lambda^\beta$ snake. In particular, $\H_p$ has the same scaling limit than the scaled height process of a well-chosen Bellman--Harris forest.
\end{theorembis}

Combining Theorems~\ref{thm:tightness} and~\ref{thm:height}, we see that when~\ref{ass-G},~\ref{ass-C1},~\ref{ass-C2} and~\eqref{eq:independence-condition} hold, then $(\Pi_p, \cV_p)$ is tight and the marginals of any accumulation point $(\Pi_\infty, \cV_\infty)$ are specified: $\Pi_\infty$ is the $\lambda^\gamma/\lambda^\beta$-snake and $\cV_\infty$ is a subordinator with Laplace exponent $\lambda^\alpha$. As $\H_\infty(t) = \lvert \Pi_\infty(t) \rvert$ this is enough to describe the law of $\H_\infty$ (which by Theorem~\ref{thm:tightness} is the scaling limit of $\H_p$) but since $\C^\star_\infty = \H_\infty \circ \cV^{-1}_\infty$ one needs to specify the correlation structure between $\Pi_\infty$ and $\cV_\infty$ in order to determine the scaling limit of $\C^\star_p$. Our next main result shows that if in addition~\eqref{eq:independence-condition-2} holds, then $\H_\infty$ and $\cV_\infty$ are independent, thereby implying the finite-dimensional convergence of $(\H_p, \C^\star_p)$ (and thus of $(\H_p, \C^\star_p, \X_p)$).

\begin{theorembis} {C} \label{thm:contour}
	If Assumptions~\ref{ass-G},~\ref{ass-C1} and~\ref{ass-C2} are satisfied and the asymptotic independence conditions~\eqref{eq:independence-condition} and~\eqref{eq:independence-condition-2} hold, then $\cV_\infty$ is independent of $\Pi_\infty$. Moreover, in this case we have $c_p / v_p \to \infty$.
\end{theorembis}

Under the assumptions of Theorem~\ref{thm:contour}, we therefore have $(\C^\star_p, \X_p) \fdd (\C^\star_\infty, \X_\infty)$. This convergence suggests that asymptotically, the chronological contour process $\C$ can be seen as a nice, baseline process $\H_\infty \circ \cV_\infty^{-1}$ on the space scale $1/c_p$, to which long ``hairs'' of the order $v_p \gg 1/c_p$ are grafted. Such a behavior was first established for Bellman--Harris processes in~\cite{Vatutin79:1} and then in a more precise form for branching random walks~\cite[Theorem $5$]{Janson05:0}.

\subsection{Explicit examples} \label{sub:examples}
We now describe two large classes of models where the above assumptions are satisfied. The first class consists of CMJ forests with finite variance offspring distribution. We find it quite striking that such a simple condition, without any assumption on how atoms of $\cP^*$ are spread, implies that the corresponding CMJ forests belong to the Bellman--Harris universality class.

\begin{theorembis} {D} \label{thm:finite-variance}
	If Assumptions~\ref{ass-G},~\ref{ass-C1} and~\ref{ass-C2} are satisfied with $\E(\lvert \cP^* \rvert^2)<\infty$, then~\eqref{eq:independence-condition} and~\eqref{eq:independence-condition-2} hold. In particular, the limiting spine process is a $\lambda^2/\lambda^\beta$ snake and the time-change $\cV_\infty$ is an independent subordinator with Laplace exponent $\lambda^\alpha$.
\end{theorembis}

The second class is a natural extension of the Bellman--Harris case, and allows $\lvert \cP^* \rvert$ to have infinite variance.

\begin{theorembis} {E} \label{thm:extended-BH}
	Assume that:
	\begin{itemize}
		\item Assumptions~\ref{ass-G} and~\ref{ass-C2} are satisfied with $V^*$ and $\lvert \cP^* \rvert$ independent;
		\item conditionally on $(V^*, \lvert \cP^* \rvert) = (v,n)$, the locations of the atoms of $\cP^*$ are i.i.d.\ with common distribution $v X$ for some random variable $X \in (0,1]$.
	\end{itemize}
	Then Assumption~\ref{ass-C1} is satisfied with $\beta = \alpha$ and~\eqref{eq:independence-condition} and~\eqref{eq:independence-condition-2} hold. In particular, $\Pi_\infty$ is the $\lambda^\gamma/\lambda^\alpha$-snake and the time-change $\cV_\infty$ is an independent subordinator with Laplace exponent $\lambda^\alpha$.
\end{theorembis}

Note that this result generalizes results for Bellman--Harris processes which correspond to the case $X = 1$.

\subsection{Beyond the Galton--Watson and Bellman--Harris universality classes}\label{sect:discussion}

As presented above, our main results assume that $R(1)$ and $V^*$ have infinite first moment. When both $R(1)$ and $V^*$ have finite first moment, all the edges are ``short'' and the CMJ forest is close to its genealogical counterpart: in other words, the time structure does not matter much. This case of ``short edges'' has been worked out in~\cite{Schertzer:0}. The main result is that CMJ forests with $\E(R(1)) < \infty$ and $\E(V^*) < \infty$ belong to the universality class of Galton--Watson processes and that in the limit, the chronological height and contour processes are related through a deterministic time-change.

Beyond the Galton--Watson and Bellman--Harris universality classes treated in~\cite{Schertzer:0} and in the present paper, there remains a large class of CMJ forests with ``long'' edges but where the chronological and genealogical structures remain dependent in the limit (i.e.,~\eqref{eq:independence-condition} or~\eqref{eq:independence-condition-2} does not hold). In current work in progress, we are looking at the case where $\cP^*$ conditionally on $V^*$ is a renewal process stopped at $V^*$: the chronology and the genealogy then remain positively correlated. Whether or not even more general results can be obtained in this dependent setting is an interesting research direction.

\subsection{Comments on the contour process}

There are various definitions of the contour process in the literature. In every case, the value of the contour is defined as the distance to the root of an exploration particle, but various choices have been made for the speed at which this particle explores the tree.

For discrete trees and in~\cite{Schertzer:0}, the particle is assumed to travel at unit speed along the edges of the forest, in such a way that each point of the tree is visited twice.

Alternatively, Lambert proposed in~\cite{Lambert10:0} the Jumping Chronological Contour Process where the particle travels at infinite speed when going upward and at speed one when going downward. In the binary, homogeneous case where $\cP^*$ is an independent Poisson process stopped at $V^*$, this contour process has the desirable property of being a L\'evy process. This property has far-reaching implications which allow for a detailed study of this important class of CMJ processes, see for instance~\cite{Cloez:0, Davila-Felipe15:0, Delaporte15:0, Lambert15:0, Lambert13:0, Richard13:0, Richard14:0}.

In the present paper, analogously to~\cite{Duquesne:0} we make the exploration particle move at infinite speed when going downward, and at unit speed when going upward, see Figure~\ref{fig:chronological-processes}. We believe that this is a good choice as far as scaling limits are concerned because of the relations~\eqref{eq:expression-C}. In~\cite{Schertzer:0}, we defined the contour process in the ``classical way'', i.e., when the exploration particle moves at constant speed. This definition led to a significant number of technical problems due to the absence of tightness of the contour in the Skorohod topology. Our choice of contour function allows to circumvent these problems and we believe that it has potential for broader applications.

\section{Notation and set-up} \label{sec:notation}

\subsection{Notation} \label{sub:notation} We gather here the notation used in the rest of the paper. Let $\Z$ denote the set of integers, $\N = \Z \cap \R_+$ the set of non-negative integers and $\Q$ the set of rational numbers. For $x, y \in \R$ let $[x] = \max\{n \in \Z: n \leq x\}$, $x^+ = \max(x, 0)$ and $x \wedge y = \min(x, y)$. Throughout we adopt the convention $\max \emptyset = \sup \emptyset = -\infty$ and $\min \emptyset = \inf \emptyset = +\infty$.

\subsubsection{Functions}

The set of c\`adl\`ag functions is endowed with the Skorohod topology. For a c\`adl\`ag function $f: \R \to \R$ we denote by $f(x-)$ its left-limit at $x \in \R$, by $\Delta f(x) = f(x) - f(x-)$ the size of its jump and by $f^{-1}$ its right-continuous inverse:
\[ f^{-1}(t) = \inf \left\{s \geq 0: f(s) > t \right\}, \ t \geq 0. \]
For any $h \geq 0$ and any two mappings $f, g$ defined on $\R_+$ or a subset thereof (such that $f$ is at least defined on $[0,h]$) we define $[f, g]_h$ by
\begin{equation} \label{eq:conc}
	[f, g]_h(x) =
	\begin{cases}
		f(x) & \text{ if } x \leq h,\\
		f(h) + g(x-h) & \text{ if } x > h.
	\end{cases}
\end{equation}
For $h \in \R$ we consider $\Theta_h$, $\cdot \mid_h$ and $\cdot \mid_{h-}$ the shift and stopping operators, which act on functions $f : \R \to \R$ by
\[ \Theta_h(f)(x) =
	\begin{cases}
		f(x+h) - f(h) & \text{ if } x > 0,\\
		0 & \text{ if } x \leq 0,
	\end{cases} \]
and
\[ f\mid_h(x) = f \left( x \wedge h \right) \ \text{ and } \ f\mid_{h-}(x) = f \mid_h(x) - \Delta f(h) \Indicator{x \geq h} = \begin{cases}
	f(x) & \text{ if } x < h,\\
	f(h-) & \text{ if } x \geq h.
\end{cases} \]
Note in particular that $\Theta_h(f)(0) = 0$, $f\mid_h(h) = f(h)$ and $f\mid_{h-}(h) = f(h-)$.

\subsubsection{Measures}

We let $\cM$ be the set of positive Radon measures on $[0,\infty)$ endowed with the weak topology, $\epsilon_x \in \cM$ for $x \geq 0$ be the Dirac measure at $x$ and $\zero$ be the zero measure. We will identify any measure $\nu \in \cM$ with the c\`adl\`ag, non-decreasing function $x \in \R \mapsto \nu[0,x^+] \in \R_+$ which gives sense to $\Theta_h(\nu)$, $\nu\mid_h$, $\nu\mid_{h-}$ and $[\nu, \nu']_h$ for $\nu, \nu' \in \cM$ and $h \in \R$, e.g.,
\[ \Theta_h(\nu)(x) = \nu \left( (h, x+h] \cap \R_+ \right), \ \nu\mid_h(x) = \nu \big( [0,h] \cap [0,x] \big) \ \text{ and } \ \left[\nu,\nu'\right]_h \ = \ \nu \mid_h + \Theta_{-h}(\nu') \]
for $x \geq 0$. We will also write
\[ \left[\nu,\nu'\right]_{h-} \ = \ \nu \mid_{h-} + \Theta_{-h}(\nu') \]
so that $[\nu, \nu']_h = [\nu, \nu']_{h-} + \nu(\{h\}) \epsilon_h$. Note also that we have $\nu = [\nu, \Theta_h(\nu)]_h$.

The mass of $\nu \in \cM$ will be denoted by $\lvert \nu \rvert = \nu[0,\infty)$ and the supremum of its support by $\pi(\nu) = \inf \{ x \geq 0: \pi(x, \infty) = 0 \}$ with the convention $\pi(\zero) = 0$. When $\lvert \nu \rvert$ and $\pi(\nu)$ are finite we consider the reversed measure $\cL(\nu)$ defined by
\[ \cL(\nu)(x) = \nu[\pi(\nu) - x, \pi(\nu)] = \lvert \nu \rvert - \nu[0, \pi(\nu) - x), \ x \geq 0. \]

If $\nu \in \cM$ is of the form $\nu = \sum_{i=1}^{\lvert \nu \rvert} \epsilon_{a(i)}$ with $0 \leq a(1) \leq \cdots \leq a(\lvert \nu \rvert))$ and if $k\in\{0, \ldots, \lvert \nu \rvert-1\}$, we will write $A_k(\nu)=a({k+1})$ for the position of the $(k+1)$st atom of $\nu$ where atoms are ranked from bottom to top.

In the following we need continuity properties of some of the above operators. The following lemma gathers the required results, which can be proved using standard results of the Skorohod topology, see for instance~\cite{Jacod03:0}.

\begin{lemma}\label{lemma:continuity-properties}
	If $f_p \to f$, $t_p \to t$ and either $\Delta f_p(t_p) \to \Delta f(t)$ or $\Delta f(t) = 0$, then $f_p \mid_{t_p} \to f\mid_t$, $f_p \mid_{t_p-} \to f\mid_{t-}$ and $\Theta_{t_p}(f_p) \to \Theta_t(f)$.
	
	If $\nu_p \to \nu$ with $\pi(\nu) = \infty$, $h_p \to h$ and $\nu$ has no atom at $h$, then $\cL(\nu_p \mid_{h_p}) \to \cL(\nu \mid_h)$ as well as $\cL(\nu_p \mid_{h_p-}) \to \cL(\nu \mid_h)$.	
\end{lemma}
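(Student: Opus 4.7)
For Part 1, my plan is to use the Skorohod J1 representation: $f_p \to f$ in $D(\R_+)$ if and only if there exist continuous strictly increasing bijections $\lambda_p$ of $\R_+$ with $\lambda_p \to \mathrm{id}$ and $f_p \circ \lambda_p \to f$, both locally uniformly. The central trick is to modify $\lambda_p$ into a new time-change $\tilde\lambda_p$ satisfying $\tilde\lambda_p(t) = t_p$ exactly, by piecewise-linearly splicing $\lambda_p$ on a small neighborhood of $t$ chosen to contain no other jump of $f$. Since $\lvert \lambda_p(t) - t_p \rvert \to 0$, this is a uniformly vanishing perturbation, so $\tilde\lambda_p \to \mathrm{id}$ uniformly; combined with either $\Delta f(t) = 0$ (continuity at $t$) or $\Delta f_p(t_p) \to \Delta f(t)$ (matching jumps), this ensures $f_p \circ \tilde\lambda_p \to f$ locally uniformly.

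With $\tilde\lambda_p(t) = t_p$ in hand, $(f_p\mid_{t_p}) \circ \tilde\lambda_p$ agrees with $(f_p \circ \tilde\lambda_p)\mid_t$ on $[0,t]$ and is the constant $f_p(t_p)$ past $t$. Both hypotheses imply $f_p(t_p) \to f(t)$: the first case is immediate from the continuity of $f$ at $t$, and in the second, $f_p(t_p) = f_p(t_p-) + \Delta f_p(t_p) \to f(t-) + \Delta f(t) = f(t)$ (the convergence $f_p(t_p-) \to f(t-)$ coming from $f_p(\tilde\lambda_p(t)-) = f_p(\tilde\lambda_p(t-))$ and the uniform convergence of $f_p \circ \tilde\lambda_p$). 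Hence $(f_p\mid_{t_p}) \circ \tilde\lambda_p \to f\mid_t$ locally uniformly. The analogous computation with $f_p(t_p-) \to f(t-)$ in place of $f_p(t_p)$ gives $f_p\mid_{t_p-} \to f\mid_{t-}$. For the shift, introduce $\mu_p(s) := \tilde\lambda_p(s+t) - t_p$, which is a continuous increasing bijection of $\R_+$ fixing $0$ with $\mu_p \to \mathrm{id}$; then
\[ \Theta_{t_p}(f_p) \circ \mu_p(s) \ = \ f_p(\tilde\lambda_p(s+t)) - f_p(t_p) \ \longrightarrow \ f(s+t) - f(t) \ = \ \Theta_t(f)(s) \]
locally uniformly in $s \geq 0$, yielding the desired Skorohod convergence.

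For Part 2, I identify each $\nu \in \cM$ with its non-decreasing c\`adl\`ag distribution function and apply Part 1 with $t = h$, $t_p = h_p$: the absence of an atom of $\nu$ at $h$ means $\Delta \nu(h) = 0$, so $\nu_p\mid_{h_p} \to \nu\mid_h$ and $\nu_p\mid_{h_p-} \to \nu\mid_{h-} = \nu\mid_h$ in Skorohod, and in particular weakly as finite measures supported on a common compact. What remains is the continuity of the reversal $\cL$ at $\nu\mid_h$, for which the crucial ingredient is $\pi(\nu_p\mid_{h_p}) \to \pi(\nu\mid_h)$: the absence of an atom of $\nu$ at $h$ stabilizes the rightmost atom of $\nu_p\mid_{h_p}$ in the limit, while the assumption $\pi(\nu) = \infty$ rules out the degenerate scenario $\nu\mid_h = \zero$ where the reversal would be undefined. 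The identity $\cL(\mu)(x) = \lvert \mu \rvert - \mu[0, \pi(\mu) - x)$ then produces pointwise convergence of $\cL(\nu_p\mid_{h_p})(x)$ at every continuity point $x$ of $\cL(\nu\mid_h)$, and the same argument works with $\nu_p\mid_{h_p-}$. The main obstacle throughout is the splicing construction of $\tilde\lambda_p$ in Part 1, which must be executed so as not to introduce phantom jumps in $f_p \circ \tilde\lambda_p$; this is a standard J1 manipulation, made possible by the finite density of jumps of $f$ and $f_p$ on compacts and justified by the results collected in~\cite{Jacod03:0}.
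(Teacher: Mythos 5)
The paper gives no proof of this lemma at all (it is dispatched with a pointer to Jacod--Shiryaev), so there is no argument of the authors to compare yours against; I can only assess your proposal on its own terms. Your Part~1 is the correct standard J1 argument. The one delicate point is that when $\Delta f(t) \neq 0$ the splice forcing $\tilde\lambda_p(t) = t_p$ must not misalign a macroscopic jump, and this is precisely what the hypothesis $\Delta f_p(t_p) \to \Delta f(t)$ guarantees: it identifies $t_p$, for large $p$, as the jump time of $f_p$ that $\lambda_p$ already matches (up to a vanishing error) with the jump of $f$ at $t$, so the perturbation of $\lambda_p$ in a small window around $t$ free of other comparable jumps of $f$ preserves locally uniform convergence. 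Your derivations of the three conclusions from $\tilde\lambda_p(t) = t_p$, including $f_p(t_p) \to f(t)$ and $f_p(t_p-) \to f(t-)$, are correct.

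Part~2, however, has a genuine gap at the step you yourself single out as crucial: $\pi(\nu_p\mid_{h_p}) \to \pi(\nu\mid_h)$ does \emph{not} follow from the stated hypotheses, and indeed the second half of the lemma is false as literally stated. Take $\nu$ with density $\Indicator{x \leq 1} + \Indicator{x \geq 2}$ with respect to Lebesgue measure, $h = h_p = 3/2$ and $\nu_p = \nu + p^{-1}\epsilon_{6/5}$. Then $\nu_p \to \nu$ (even uniformly as distribution functions), $\pi(\nu) = \infty$ and $\nu$ has no atom at $h$; yet $\pi(\nu_p\mid_{h_p}) = 6/5$ while $\pi(\nu\mid_h) = 1$, so $\cL(\nu_p\mid_{h_p})$ converges to a translate of $\cL(\nu\mid_h)$ and not to $\cL(\nu\mid_h)$. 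The absence of an atom of $\nu$ \emph{at} $h$ controls nothing about the support of $\nu$ just below $h$, nor does it prevent $\nu_p$ from carrying vanishing mass inside a gap of the support of $\nu$ below $h$; so the assertion that ``the rightmost atom of $\nu_p\mid_{h_p}$ stabilizes'' is exactly what fails. What rescues the statement everywhere the paper applies it is the additional property that $h$ is a left accumulation point of the support of $\nu$, equivalently $\pi(\nu\mid_h) = h$ (there $\nu$ is the path of a strictly increasing subordinator, which increases on every interval $(h-\delta, h)$). Under that property, $\nu_p \to \nu$ forces $\nu_p$ to increase somewhere in $(h-\delta, h_p]$ for large $p$, whence $h - \delta \leq \pi(\nu_p\mid_{h_p}) \leq h_p$ and the convergence of the reversal points does hold; your proof should state and invoke this extra hypothesis explicitly, after which the remainder of your Part~2 goes through.
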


\subsubsection{Random variables.}

Let
\[ \L = \{ (v, \nu) \in (0,\infty) \times \cM: \nu\{0\} = 0 \text{ and } v \geq \pi(\nu) \}. \]
We will start from discrete processes defined on the measurable space $(\Omega, \mathcal{F})$ with $\Omega = \L^\Z$ the space of doubly infinite sequences of sticks and $\mathcal{F}$ the $\sigma$-algebra generated by the coordinate mappings. An elementary event $\omega \in \Omega$ is written as $\omega = (\omega_n, n \in \Z)$ and $\omega_n = (V_n, \cP_n)$. For $n \in \Z$ we consider the operators $\shift_n, \dualoperator^n: \Omega \to \Omega$ defined as follows:
\begin{itemize}
	\item $\shift_n$ is the shift operator, defined by $\shift_n(\omega) = (\omega_{n + k}, k \in \Z)$;
	\item $\dualoperator^n$ is the dual (or time-reversal) operator, defined by $\dualoperator^n(\omega) = (\omega_{n - k - 1}, k \in \Z)$.
\end{itemize}
When no confusion can arise we will use the notation $W^n = W \circ \shift_n$ and $\hat W^n = W \circ \dualoperator^n$ for any random variable $W$ defined on $\Omega$.

We define $\sigma(X)$ as the $\sigma$-algebra generated by the random variable $X$, $\cF_{\geq \Gamma} = \sigma (\omega_k, k \geq \Gamma)$ and $\cF_{< \Gamma} = \sigma(\omega_k, k < \Gamma)$ for any random time $\Gamma: \Omega \to \N$, and $m \cF$ the set of random variables that are measurable with respect to the $\sigma$-algebra $\cF$. For random variables $X, Y$ we will write $X \ci Y$ to mean that they are independent.

\subsection{Lukasiewicz path and ladder process.}\label{sect:luka} In this section, we introduce an important trivariate renewal process $(\cZ, T, R)$: $(\cZ, T)$ is the usual ladder process of the Lukasiewicz path, and $R$ adds chronological information. We define the Lukasiewicz path $S = (S(n), n \in \Z)$ by $S(0) = 0$ and, for $n \geq 1$,
\[ S(n) = \sum_{k=0}^{n-1} ( \lvert \cP_k \rvert - 1 ) \ \text{ and } \ S(-n) = -\sum_{k=-n}^{-1} ( \lvert \cP_k \rvert - 1 ). \]
Define the ladder time process $T = (T(k), k \in \N)$ by $T(0)=0$ and for $k \geq 0$,
\[ T(k+1) = \inf \big\{ \ell > T(k): S(\ell) \geq S(T(k)) \big\} = T(1) \circ \shift_{T(k)} + T(k) \]
with the convention $T(k+1) = \infty$ if $T(k) = \infty$. We identify $T$ and the function $x \in \R_+ \mapsto T([x])$, and thus also with the measure $\sum_{k \in \N} (T(k+1) - T(k)) \epsilon_{k+1}$. We consider the following two inverses of $T$:
\[ T^{-1}(n) = \min \left\{ k \geq 0: T(k) \geq n \right\} \ \text{ and } \ \bTm = \max \left\{ k \geq 0: T(k) \leq n \right\}. \]
Note that $T^{-1}(n) = \bTm(n)$ if $n$ is a weak record time and $T^{-1}(n) = \bTm(n)+1$ otherwise. Moreover, it is well-known that in the Galton--Watson case, the height process at time $n$ is related to the renewal process $T$ through the relation $\cH(n) = \tilde T^{-1}(n)\circ\vartheta^n$.

Consider the ladder height process $\cZ$, defined (as a measure) by
\[ \cZ(\{k\}) = S(T(1)) \circ \theta_{T(k-1)} \]
for $k \geq 1$ with $T(k-1) < \infty$. In words, $\cZ(\{k\})$ is the value of the $k$th overshoot of $S$. We can now define the process $R$ that contains the useful chronological information (recall that $A_k(\nu)$ denotes the position of the $(k+1)$st atom of $\nu$):
\[ R = \sum_{k \in \N^{*}: T(k) < \infty} A_{\cZ(\{k\})}(\cP_{T(k)-1}) \ \epsilon_{k}. \]
As we shall see in Theorem~\ref{thm:H-Pi} below, $\hat R^n = R\circ \vartheta^n$ makes it possible to recover the chronological contribution of the successive ancestors of $n$ to its chronological height.

The strong Markov property implies that $(T, \cZ, R)$ is a trivariate renewal process. With the notation of the present paper, the equation below $(2.18)$ in~\cite{Schertzer:0} states that
\begin{equation} \label{eq:F-0}
	\E \big[ F \left(\cP_{T(1)-1}, - S(T(1)-1), T(1) \right) \big] = \sum_{t \geq 1, x \geq 0} \E \left( F(\cP^*, x, t) ; \lvert \cP^* \rvert \geq x+1 \right) \P \left( \tau^-_x = t \right)
\end{equation}
with $\tau^-_x = \inf\{n \geq 0: S(n) = -x\}$, from which we deduce that for any $G: \N \times \N \times \R_+ \to \R_+$ measurable, we have
\begin{equation} \label{eq:F}
	\E \big[ G(T(1), \cZ(1), R(1)) \big] = \sum_{t, x \geq 1} \sum_{z \geq 0} \E \left[ G(t, z, A_z(\cP^*)) ; \lvert \cP^* \rvert = x+z \right] \P \left( \tau^-_{x-1} = t-1 \right).
\end{equation}

\subsection{Spine process}\label{sect:spine-prcocess}

The spine process was introduced in~\cite{Schertzer:0} as an extension of the classical exploration process~\cite{Le-Gall98:0} suited for the chronological setting. Here, we only need a marginal of this process, denoted $\Pi = (\Pi(n), n \geq 0)$, which for simplicity we continue to name spine process. For each $n \geq 0$, $\Pi(n) \in \cM$ is the random measure defined by
\begin{equation} \label{eq:spine-length}
	\Pi(n) = \sum_{i=1}^{\cH(n)} \hat R^{n}(\{i\}) \ \epsilon_{\cH(n)-i} = \left( \sum_{i=1}^{\bTm(n)} R(\{i\}) \ \epsilon_{\bTm(n)-i} \right) \circ \dualoperator^n.
\end{equation}

\subsection{Joint distribution}

One of the main result of~\cite{Schertzer:0} is to relate the height process $\H$ as well as the spine process $\Pi$ to the bivariate renewal process $(T, R)$. Namely, with the notation of the present paper, Theorem~$1.1$ and Proposition~$2.4$ of~\cite{Schertzer:0} can be formulated as follows.

\begin{theorem} \label{thm:H-Pi}
	For any $n \geq 0$ we have
	\begin{equation} \label{eq:H-Pi}
		\H(n) = \left( R \circ \bTm(n) \right)\circ\vartheta^n, \ \cH(n) = \bTm(n) \circ\vartheta^n \ \text{ and } \ \Pi(n) = \cL\left( R \mid_{\bTm(n)} \right) \circ \vartheta^n.
	\end{equation}
	In particular, $\H(n)$ can be recovered by summing up the weights carried by the atoms of the measure $\Pi(n)$, i.e.,
	\[ \H(n) = \lvert \Pi(n) \rvert = \Pi(n)(\cH(n)-) = \sum_{i=1}^{\cH(n)} \ \hat R^{n}(\{i\}) = \hat R^n \left( \cH(n) \right). \]
\end{theorem}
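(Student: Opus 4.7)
The plan is to derive the three identities by exploiting the classical time-reversal bijection between ancestral lines in planar forests and weak ascending ladder epochs of the Lukasiewicz walk, and then to lift this bijection to the chronological setting by tracking the age-at-childbearing of each ancestor via the process $R$.

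First, I would establish the genealogical identity $\cH(n) = \bTm(n)\circ\vartheta^n$. This is a standard combinatorial lemma for planar discrete forests (Le Gall--Le Jan, Duquesne--Le Gall): the ancestors of the individual with lexicographic index~$n$ are in bijection with the weak ascending ladder epochs of the reversed Lukasiewicz walk $\hat S^n = S\circ \vartheta^n$ up to time~$n$. I would give a short self-contained argument by induction on~$n$: moving from~$n-1$ to~$n$ in lexicographic order either descends one genealogical level (corresponding to $\hat S^n$ reaching a new weak record) or climbs up to the next sibling of an ancestor (an excursion of the reversed walk strictly below its past maximum).

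Second, I would refine this bijection chronologically. For each weak ladder index $k$ of $\hat S^n$, the reversed walk reaches a new record at position $T(k)\circ\vartheta^n$ with overshoot $\cZ(\{k\})\circ\vartheta^n$. Unpacking the definition of $R$, this overshoot is precisely the rank, among the children of the $k$th most recent ancestor of~$n$, of the descendant lying on the ancestral line of~$n$. Consequently the age at which that ancestor begot its distinguished child on the spine equals
\[ \hat R^n(\{k\}) = A_{\cZ(\{k\})}(\cP_{T(k)-1}) \circ \vartheta^n. \]
Summing over all ancestors of~$n$ yields
\[ \H(n) = \sum_{k=1}^{\cH(n)} \hat R^n(\{k\}) = \hat R^n(\cH(n)) = (R\circ \bTm(n))\circ\vartheta^n, \]
where the last equality uses the first identity.

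Third, the spine measure $\Pi(n)$ records these age contributions ordered from the root down to~$n$, whereas the reversed walk encounters the ancestors in the opposite order: the $k$th atom of $\hat R^n$ corresponds to the $(\cH(n)-k)$th ancestor along the spine. Stopping $R$ at $\bTm(n)$ discards the ladder indices of the reversed walk that occur after time~$n$, and applying the reversal operator~$\cL$ converts the ``from~$n$ to root'' encoding into the ``from root to~$n$'' encoding of~\eqref{eq:spine-length}, giving $\Pi(n) = \cL(R\mid_{\bTm(n)})\circ\vartheta^n$. The identity $\H(n)=\lvert\Pi(n)\rvert$ follows since $\cL$ preserves total mass. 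The main obstacle I anticipate is the careful bookkeeping of the weak-versus-strict ladder dichotomy around time~$n$: when $n$ itself is a weak record time of $\hat S^n$ (equivalently, $n$ sits on the ``right frontier'' of its parent's children), $T^{-1}$ and $\tilde T^{-1}$ differ by one, and the correct stopping operator ($\mid_h$ versus $\mid_{h-}$) and index shift ($\cP_{T(k)-1}$ versus $\cP_{T(k)}$) must be chosen to avoid either double-counting or missing the contribution of the lowest ancestor.
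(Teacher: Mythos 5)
Your plan is sound, but note first that the paper you are working from does not actually prove this statement: Theorem~\ref{thm:H-Pi} is explicitly a reformulation of Theorem~1.1 and Proposition~2.4 of the companion paper~\cite{Schertzer:0}, so there is no in-paper proof to compare against. Measured against what such a proof must contain, your outline identifies the right skeleton: the identity $\cH(n)=\bTm(n)\circ\vartheta^n$ is the classical correspondence between ancestors of $n$ and weak ascending ladder epochs of the dual walk $\hat S^n$; the chronological identities then follow once one knows which atom of each ancestor's point process lies on the spine; and the third identity is essentially definitional given~\eqref{eq:spine-length}, since $\cL$ applied to $R\mid_{\bTm(n)}$ produces exactly $\sum_{i}R(\{i\})\,\epsilon_{\bTm(n)-i}$.

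The one substantive step you assert rather than establish is the identification of the overshoot: under the paper's planar convention (the next stick is grafted at the \emph{highest} available stub, so children are explored from top to bottom of the parent's stick), $\cZ(\{k\})\circ\vartheta^n$ equals the number of children of the $k$th ancestor whose subtrees are explored strictly after $n$, i.e.\ the number of siblings \emph{below} the spine child; hence the spine child sits at the $(\cZ(\{k\})+1)$st atom from the bottom, which is precisely $A_{\cZ(\{k\})}(\cP_{T(k)-1})\circ\vartheta^n$ with the paper's zero-based convention for $A_j$. Your word ``rank'' is ambiguous on exactly this point and is where an off-by-one would creep in; you correctly flag the weak-versus-strict ladder dichotomy at $n$ itself as the other delicate spot. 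With those two pieces of bookkeeping carried out (e.g.\ by the induction on $n$ you propose), the telescoping of birth dates along the spine gives $\H(n)=\sum_{k}\hat R^n(\{k\})$ and the theorem follows. I would accept this as a correct proof strategy, with the caveat that the combinatorial lemma at its core still has to be written out; the companion paper does this via a recursive construction of the spine process, which is a different packaging of the same ladder-epoch correspondence.
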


\subsection{The $\psi/\phi$-snake} \label{sub:snake}

Let $\psi$ with $\int^\infty \d u / \psi(u) < \infty$ be the Laplace exponent of a spectrally positive L\'evy process with infinite variation that does not drift to $+\infty$. We call $\psi$-height process the height process associated to the L\'evy process with Laplace exponent $\psi$ and $\psi/\phi$-snake the L\'evy snake whose life-time process is the $\psi$-height process and whose spatial displacement is the L\'evy process with Laplace exponent $\phi$, see Duquesne and Le Gall~\cite{Duquesne02:0}.

The $\psi$-height process encodes the genealogy of the forest. As explained in the introduction, the scaling limit of the chronological forest will be obtained by marking the genealogy where the marks correspond to random stretching, which is exactly the interpretation of the $\psi/\phi$-snake when $\phi$ is the L\'evy exponent of a subordinator as will be the case here.
\\

Let a killed path be a c\`adl\`ag mapping $w: [0, \zeta) \to \R$ with $\zeta \in (0,\infty)$ called the life-time of the path and $\mathcal{W}$ be the set of killed paths, which is the state-space of the $\psi/\phi$-snake. 

Whenever it exists in $\R \cup \{\pm \infty\}$, we call terminal value of a killed path the value of $w(\zeta-)$. For two killed paths $w, w' \in \mathcal{W}$ with respective life times $\zeta$ and $\zeta'$, we consider the distance
\begin{equation}\label{eq:distance}
	d \left( w, w'\right) = \left \lvert \zeta - \zeta'\right \rvert + \int_0^{\zeta \wedge \zeta'} d_{(u)} (w_{\leq u}, w'_{\leq u}) \wedge 1 \ \d u,
\end{equation}
where $w_{\leq u}$ is the restriction of the path $w$ to $[0,u]$ and $d_{(u)}(f, g)$ denotes the Skorohod distance on the set of real-valued, c\`adl\`ag functions defined on $[0,u]$. As mentioned in~\cite{Abraham02:0}, $(\mathcal{W}, d)$ is a Polish space. We will use the following characterization of the $\psi/\phi$-snake.

\begin{theorem} \label{teo:exist-unique-snake}
	The $\psi/\phi$-snake is the unique $\mathcal{W}$-valued continuous process $\left(W(t), t\geq 0 \right)$ satisfying the two following properties:
	\begin{enumerate}[label=\textnormal{(PS-R-\Roman*)}, ref=\textnormal{(PS-R-\Roman*)}, leftmargin=20pt]
		\item[(1)] the life-time process $H$ is the $\psi$-height process;
		\item[(2)] conditionally on $H$, $\left(W(t), \ t\geq0\right)$ is the $\mathcal{W}$-valued time-inhomogeneous Markov process with the following transition mechanism. For every $0 \leq s \leq t$,
		\begin{equation} \label{eq:continuous-snake-forward}
			W(t) = \left[W(s), \tilde W \right]_{\min_{[s,t]} H} \ \mbox{in law}
		\end{equation}
		where $\tilde W$ is a L\'evy process independent from $W(s)$ with Laplace exponent $\phi$ and killed at $H(t)-\min_{[s,t]} H$.
	\end{enumerate}
\end{theorem}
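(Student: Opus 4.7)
The plan is to split the argument into uniqueness, which follows from a finite-dimensional determination argument, and existence, which we deduce from the L\'evy snake construction of Duquesne and Le Gall~\cite{Duquesne02:0}.

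For uniqueness, let $(W(t), t \geq 0)$ be any $\mathcal{W}$-valued continuous process satisfying (PS-R-I) and (PS-R-II). The law of $H$ is fixed by (PS-R-I), and continuity together with $H(0) = 0$ forces $W(0)$ to be the trivial path of zero life-time. Given $H$, the conditional law of $(W(t_1), \ldots, W(t_n))$ for any $0 < t_1 < \cdots < t_n$ is then determined inductively via~\eqref{eq:continuous-snake-forward}, since each $W(t_i)$ is obtained from $W(t_{i-1})$ by concatenating at height $\min_{[t_{i-1}, t_i]} H$ an independent L\'evy process with exponent $\phi$ killed at $H(t_i) - \min_{[t_{i-1}, t_i]} H$. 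Unconditioning, the finite-dimensional distributions of $(H, W)$ are determined, and since $W$ is continuous with values in the Polish space $\mathcal{W}$, its law is as well.

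For existence, we invoke the L\'evy snake of~\cite{Duquesne02:0} with branching mechanism $\psi$ and spatial motion given by the L\'evy process with Laplace exponent $\phi$. This is a continuous $\mathcal{W}$-valued process whose life-time is by construction the $\psi$-height process, giving (PS-R-I). Property (PS-R-II) is then the snake property rephrased in our notation: conditionally on $H$, the parts of $W(s)$ lying strictly above level $\min_{[s,t]} H$ are killed, and a fresh independent piece of the L\'evy process with exponent $\phi$ of length $H(t) - \min_{[s,t]} H$ is appended on top. This is precisely the concatenation $[W(s), \tilde W]_{\min_{[s,t]} H}$ defined in~\eqref{eq:conc}.

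The main obstacle is really the existence of a continuous modification of the snake with the prescribed (already delicate) $\psi$-height life-time process, and this is exactly what Duquesne and Le Gall establish. Once their construction is in hand, the uniqueness argument is elementary and the translation between their transition mechanism and our concatenation-based formulation is essentially cosmetic.
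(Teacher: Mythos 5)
Your argument is correct and coincides with what the paper itself relies on: the paper states this characterization without proof, simply citing Duquesne and Le Gall for the construction of the $\psi/\phi$-snake, and your route (uniqueness via conditionally-on-$H$ finite-dimensional distributions determined inductively from the trivial initial path, existence via the L\'evy snake with spatial motion of Laplace exponent $\phi$) is the standard justification. The only points worth flagging are cosmetic: the trivial initial path has zero life-time, which sits slightly outside the paper's definition of $\mathcal{W}$ (where $\zeta \in (0,\infty)$), and identifying the snake property with the concatenation $\left[W(s), \tilde W\right]_{\min_{[s,t]} H}$ taken at level $\min_{[s,t]} H$ rather than at $\min_{[s,t]} H-$ is harmless only because, conditionally on $H$, the path $W(s)$ almost surely has no jump at that level.
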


\subsection{Triangular setting}

Recall that the main results announced in the introduction are stated in a non-triangular setting. In the following, we consider a triangular setting where $\P_p$ is the law under which the common law of the $(V_k, \cP_k)$ is equal to $(V^*_p, \cP^*_p)$. When considering convergence, we are then implicitly working under $\P_p$, that is, $X_p \Rightarrow X_\infty$ means that $\E_p(f(X_p)) \to \E(f(X_\infty))$ for every bounded continuous function $f$. Likewise, $\fdd$ refers to convergence of the finite-dimensional distributions under $\P_p$.

\section{General tightness result} \label{sec:general}

\subsection{Scaling} 
\label{sub:scaling-1}

As in the introduction, we consider throughout three vanishing scaling sequences $(g_p)$, $(c_p)$ and $(v_p)$ used to scale the genealogical and chronological processes as follows:
\[ \cH_p(t) = g_p \cH([pt]) \ \text{ and } \ S_p(t) = \frac{1}{p g_p} S([pt]), \]
\[ \C_p = (\C^\star_p, \X_p) \ \text{ with } \ \C^\star_p(t) = c_p \C^\star(pt) \ \text{ and } \ \X_p(t) = v_p \X_p(pt), \]
and
\[ \H_p(t) = c_p \H([pt]), \ \Pi_p(t)(x) = c_p \Pi([pt])(x / g_p) \ \text{ and } \ \cV_p(t) = v_p\cV([pt]) \]
with as before $\cV(n) = V_0 + \cdots + V_n$ for $n \geq 0$. In the sequel, we will consider $\Pi_p$ as a $\mathcal{W}$-valued process since the definition~\eqref{eq:spine-length} of $\Pi(n)$ implies $\Pi_p(t)[\cH_p(t), \infty) = 0$ and thus allows to identify $\Pi_p(t)$ with the killed path $x \in [0, \cH_p(t)) \mapsto \Pi_p(t)[0,x]$. Scale in addition the processes $T, R$ and $\cZ$ as
\[ T_p(x) = \frac{1}{p} T(x/g_p),\ R_p(x) = c_p R(x/g_p) \ \mbox{and } \cZ_p(x) = \frac{1}{p g_p} \cZ(x/g_p) , \ x \geq 0, \]
and finally define the dual and shifted versions of $T_p$, $R_p$ and $\cZ_p$ as follows:
\begin{equation} \label{eq:notation-scaled-RTZ-1}
	\cT^t_p = T_p \circ \dualoperator^{[pt]}, \ \hat R^t_p = R_p \circ \dualoperator^{[pt]} \ \text{ and } \cz^t_p = \cz_p \circ \vartheta^{[pt]}
\end{equation}
and
\begin{equation} \label{eq:notation-scaled-RTZ-2}
	T^t_p = T_p \circ \theta_{[pt]}, \ R^t_p = R_p \circ \theta_{[pt]} \ \text{ and } \cZ^t_p = \cZ_p \circ \theta_{[pt]}.
\end{equation}
Note that with these definitions, Theorem~\ref{thm:H-Pi} gives
\[ \H_p(t) = \left \lvert \Pi_p(t) \right \rvert = \Pi_p(t) \big( \cH_p(t)- \big) \ \text{ and } \ \Pi_p(t) = \cL \big( \hat R^t_p \mid_{\cH_p(t)} \big). \]

\subsection{Main assumptions and general tightness result}

Throughout the paper, we consider a double-sided L\'evy process $S_\infty = (S_\infty(t), t \in \R)$ of infinite variation, which is spectrally positive and does not drift to $+\infty$ as $t \to +\infty$. We denote by $\psi$ its Laplace exponent which is assumed to satisfy
\[ \int_1^\infty \frac{\d u}{\psi(u)} < \infty. \]
We let $T_\infty$ denote the ladder time process of $(S_\infty(t), t \geq 0)$ and $\cZ_\infty = S_\infty \circ T_\infty$ denote its ladder height process. We also consider $\cH_\infty$ its associated height process which is therefore the $\psi$-height process.

Duquesne and Le Gall~\cite{Duquesne02:0} proved that the following assumption implies the joint convergence of the Lukasiewicz path together with the genealogical height and contour processes. Proposition~\ref{prop:convergence-genealogy} below states a slight extension of this result needed for our purposes. Note that the condition on $Z_p$ below is automatically satisfied in the non-triangular case.

\begin{assumption} {\text{\textnormal{G}}'} \label{ass-G'}
	We say that Assumption~\ref{ass-G'} holds if $S_p \Rightarrow S_\infty$ with $S_\infty$ introduced above, and if for every $\delta > 0$ we have
	\[ \liminf_{p \to \infty} \ \P \left( Z_p([\delta / g_p]) = 0 \right) > 0 \]
	where $(Z_p(k), k \geq 0)$ is a Galton--Watson process with offspring distribution $\lvert \cP^*_p \rvert$ and started with $[p g_p]$ individuals.
\end{assumption}

Note that Assumption~\ref{ass-G'} implies that $p g_p \to \infty$. In addition to the genealogical Assumption~\ref{ass-G'}, we will need the next chronological assumptions.

\begin{assumption} {\text{\textnormal{C1'}}} \label{ass-C1'}
	We say that Assumption~\ref{ass-C1'} holds if $R_p \Rightarrow R_\infty$ with $R_\infty$ a non-degenerate subordinator. In this case, its Laplace exponent is denoted by $\phi$.
\end{assumption}

\begin{assumption} {\text{\textnormal{C2'}}} \label{ass-C2'}
	We say that Assumption~\ref{ass-C2'} holds if $\cV_p \Rightarrow \cV_\infty$ with $\cV_\infty$ a non-degenerate subordinator.
\end{assumption}

\begin{theorembis} {\ref{thm:tightness}'} [Triangular version of Theorem~\ref{thm:tightness}] \label{thm:tightness'}
	Under Assumptions~\ref{ass-G'},~\ref{ass-C1'} and~\ref{ass-C2'}, the sequence $(\Pi_p, \cV_p)$ is tight. Moreover, let $(\Pi_\infty, \cV_\infty)$ be any accumulation point and for $t \geq 0$ define $\H_\infty(t) = \lvert \Pi_\infty(t) \rvert$, $\C^\star_\infty(t) = \H_\infty \circ \cV_\infty^{-1}(t)$ and $\X_\infty(t) = t - \cV_\infty(\cV_\infty^{-1}(t)-)$. Then:
	\begin{enumerate}
		\item \label{tightness-1} $\Pi_\infty$ is almost surely continuous;
		\item for every $t \geq 0$, $\H_\infty(t)$ and $\C^\star_\infty(t)$ are almost surely finite and strictly positive;
		\item \label{tightness-3} if $(\Pi_p, \cV_p) \Rightarrow (\Pi_\infty, \cV_\infty)$ along a subsequence, then for any finite set $I \subset \R_+$ we have
		\begin{equation}
			\left(\Pi_p, \cV_p, (\H_p(t))_{t \in I}, (\C^\star_p(t))_{t \in I}, \X_p \right) \Rightarrow \left(\Pi_\infty, \cV_\infty, (\H_\infty(t))_{t \in I}, (\C^\star_\infty(t))_{t \in I}, \X_\infty \right) \tag{\ref{eq:full-conv}}
		\end{equation}
		along the same subsequence.
	\end{enumerate}
\end{theorembis}

The proof of Theorem~\ref{thm:tightness'} relies on genealogical results which we establish first.

\subsection{Genealogical convergence} 

For every $t\in\R$, define $S^t_\infty = \Theta_t(S_\infty)$ the shifted version of $S_\infty$ at time $t$, i.e.,
\[S_\infty^t(u) = S_\infty(t+u) - S_\infty(t), \ u \geq 0, \] 
and $\hat S^t_\infty$ the dual path of $S_\infty$ at time $t$ as
\[\hat S_\infty^t(u) = S_\infty(t) - S_\infty(t-u), \ u \geq 0.\] 
For every random variable $W$ that can be written as a measurable function of $S_\infty$, $\hat W^t$ or $W \circ \dualoperator^t$ will refer to the same functional applied to $\hat S_\infty^t$ and $W^t$ or $W \circ \theta_t$ will refer to the same functional applied to $S_\infty^t$. This gives for instance sense to $\cT^t_\infty, \hat \cZ^t_\infty$, $T^t_\infty$ and $\cZ^t_\infty$ that will appear repeatedly, and is coherent with the notation introduced in~\eqref{eq:notation-scaled-RTZ-1} and~\eqref{eq:notation-scaled-RTZ-2}.

An important relation, which follows from the definition of the height process, is that for every $t \geq 0$ we have $\cH_\infty(t) = (\hat T^t_\infty)^{-1}(t)$ almost surely. Note that this is the continuous analog of the relation $\cH(n) = T^{-1}(n) \circ \vartheta^n$ of~\eqref{eq:H-Pi}. The next result is an extension of a result due to Duquesne and Le Gall~\cite{Duquesne02:0}

\begin{prop} \label{prop:convergence-genealogy}
	If Condition~\ref{ass-G'} holds, then
	\begin{equation} \label{eq:joint-conv}
		\left ( S_p, \cH_p, (\cz^t_p, \cT^t_p, (\cT^t_p)^{-1})_{t \in \Q} \right) \ 
		\Rightarrow \ \left ( S_\infty, \cH_\infty, (\cz^t_\infty, \cT^t_\infty, (\cT^t_\infty)^{-1})_{t \in \Q} \right).
	\end{equation}
\end{prop}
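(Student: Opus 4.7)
The plan is to bootstrap from the joint convergence $(S_p, \cH_p) \Rightarrow (S_\infty, \cH_\infty)$ established in the monograph of Duquesne and Le Gall~\cite{Duquesne02:0}. Assumption~\ref{ass-G'} is tailored so as to match their hypotheses in the triangular setting: the functional limit $S_p \Rightarrow S_\infty$ drives the Lukasiewicz convergence, while the uniform non-degeneracy condition on $Z_p$ controls the extinction probability of the associated Galton--Watson process and is precisely what is needed to transfer convergence from the Lukasiewicz level to the height process. I would first invoke their result (or its triangular adaptation) to secure the base convergence of $(S_p, \cH_p)$.

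The extension then consists in incorporating the dual ladder triple $(\cz^t_p, \cT^t_p, (\cT^t_p)^{-1})$ at every rational $t$. The idea is that, at a fixed deterministic time $t$, the dual path $\hat S^t_p$ is a measurable functional of $S_p|_{[0,t]}$, and $S_\infty$ (being a L\'evy process) has no fixed discontinuity at $t$; the time-reversal map is therefore almost surely continuous at $S_\infty$ in the Skorohod topology, which yields $\hat S^t_p \Rightarrow \hat S^t_\infty$ jointly with $(S_p, \cH_p)$. Because $\hat S^t_\infty$ has the same law as $S_\infty$ killed at $t$, it is again a spectrally positive L\'evy path of infinite variation, and the standard ladder-process invariance principle (a scaled random walk ladder process converges to the continuous subordinator whenever the walk converges to an infinite-variation spectrally positive L\'evy process) gives $\cT^t_p \Rightarrow \cT^t_\infty$ and $\cz^t_p \Rightarrow \cz^t_\infty$. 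The inverse $(\cT^t_p)^{-1} \Rightarrow (\cT^t_\infty)^{-1}$ then follows from the continuity of the right-continuous inverse at strictly increasing limits, since the subordinator $\cT^t_\infty$ is almost surely strictly increasing.

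The joint convergence for finitely many rational times $t_1, \ldots, t_k$ is obtained by iterating the splitting at deterministic times, exploiting that $S_\infty$ is almost surely continuous at each $t_i$ and that the joint law of $(\hat S^{t_1}_\infty, \ldots, \hat S^{t_k}_\infty, S_\infty)$ can be read off from $S_\infty$ by continuous operations. Since $\Q_+$ is countable, a standard projective-limit argument then upgrades finite-dimensional joint convergence to convergence indexed by all $t \in \Q_+$. The main obstacle, to my mind, is the ladder-process invariance step: the functional $S \mapsto (T, \cZ, T^{-1})$ is not continuous on the whole Skorohod space, so one must identify a set of full $\P$-measure on which continuity holds. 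This is handled using the infinite-variation condition on $\psi$ together with $\int^\infty \d u / \psi(u) < \infty$: the first ensures that $\hat S^t_\infty$ oscillates densely near any of its running-supremum values, so that weak records are unambiguous in the limit and the discrete ladder indices do not accumulate pathologically; the second guarantees finiteness and non-triviality of $\cH_\infty$. Once this regularity of the limit is secured, the continuous mapping theorem delivers the joint convergence~\eqref{eq:joint-conv}.
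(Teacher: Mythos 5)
Your overall architecture matches the paper's: invoke Duquesne--Le Gall for $(S_p,\cH_p)$, reduce the dual statements at each rational $t$ to the case $t=0$ by time reversal and measurability of the limits with respect to $S_\infty$, and handle countably many times by a product-topology argument. The gap is in the middle step. You derive $\cT^t_p \Rightarrow \cT^t_\infty$ and $\cz^t_p \Rightarrow \cz^t_\infty$ from a ``standard ladder-process invariance principle'' according to which the rescaled ladder process of a random walk converges whenever the walk does. No such principle holds with a \emph{prescribed} normalization: here $T$ is rescaled as $T_p(x)=p^{-1}T(x/g_p)$ and $\cZ$ as $\cZ_p(x)=(pg_p)^{-1}\cZ(x/g_p)$, where $g_p$ is the sequence fixed in advance by the height-process scaling. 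The convergence $S_p\Rightarrow S_\infty$ alone does not determine the correct local-time normalization for the ladder epochs; pinning it down to be $g_p$ is exactly where the second half of Assumption~\ref{ass-G'} (the condition on $Z_p$) is used, and it is the nontrivial content of Duquesne and Le Gall's Theorem~2.2.1. The paper's proof extracts from their Equation~(50) the almost sure convergence of the rescaled local time at the maximum, which in the present notation is the convergence $(S_p,(T^{-1}_p(t))_{t\in I})\Rightarrow(S_\infty,(T^{-1}_\infty(t))_{t\in I})$ for finite $I$, upgraded to a functional statement by monotonicity and continuity of $T^{-1}_\infty$; only then does it recover $T_p$ by inversion (Whitt) and $\cZ_p=S_p\circ T_p$ by composition (Kurtz). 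In other words, what the literature actually provides under Assumption~\ref{ass-G'} is the convergence of $T^{-1}$, not of $T$; your argument runs in the opposite direction and treats as ``standard'' precisely the step that carries the weight of the hypothesis.

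A secondary remark: your closing paragraph attributes the regularity needed for the continuous-mapping step to the infinite variation of $S_\infty$ and to $\int^\infty \d u/\psi(u)<\infty$. The latter condition is irrelevant to the ladder processes (it governs finiteness and continuity of $\cH_\infty$), and the former does not by itself repair the normalization issue above. The rest of your outline (duality of the reversed L\'evy path, continuity of the right-continuous inverse at strictly increasing subordinators, iteration over finitely many deterministic times) is sound and agrees with the paper.
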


\begin{proof}
	Throughout the proof, we repeatedly use the following token in order to get joint convergence results: if $(X_p, Y_p) \Rightarrow (X_\infty, Y_\infty)$ and $(X_p, Z_p) \Rightarrow (X_\infty, Z_\infty)$ with $Y_\infty$ and $Z_\infty$ measurable functions of $X_\infty$, then $(X_p, Y_p, Z_p) \Rightarrow (X_\infty, Y_\infty, Z_\infty)$. This comes from the continuous mapping theorem and will be uses without further mention.
	
	First, we reduce the proof of~\eqref{eq:joint-conv} to the simpler convergence
	\begin{equation} \label{eq:joint-conv-reduction}
		(S_p, T_p, T^{-1}_p, \cZ_p) \Rightarrow (S_\infty, T_\infty, T^{-1}_\infty, \cZ_\infty).
	\end{equation}
	Indeed, if~\eqref{eq:joint-conv-reduction} holds, then for each fixed $t \geq 0$ a simple time-reversal argument implies that $(S_p, \hat T^t_p, (\hat T^t_p)^{-1}, \hat \cZ^t_p) \Rightarrow (S_\infty, \hat T^t_\infty, (\hat T^t_\infty)^{-1}, \hat \cZ^t_\infty)$ and since $(\hat T^t_\infty, (\hat T^t_\infty)^{-1}, \hat \cZ^t_\infty)$ is measurable with respect to $S_\infty$ we obtain the joint convergence for $t \in \Q$. Moreover, Duquesne and Le Gall~\cite[Corollary $2.5.1$]{Duquesne02:0} have proved that Condition~\ref{ass-G'} implies that $(S_p, \cH_p) \Rightarrow (S_\infty, \cH_\infty)$. Since again $\cH_\infty$ is a measurable function of $S_\infty$, this finally gives the full joint convergence of~\eqref{eq:joint-conv}. The rest of the proof is therefore devoted to proving~\eqref{eq:joint-conv-reduction}.
	
	The joint convergence of the Lukasiewicz path together with its local time process at its maximum is proved in the proof of Theorem~$2.2.1$ in~\cite{Duquesne02:0}: to be precise, the almost sure convergence $\gamma_p^{-1} \Lambda^{(p)}_{[p \gamma_p t]} \to L_t$ stated in Equation~$(50)$ of~\cite{Duquesne02:0} implies with our notation that $(S_p, (T^{-1}_p(t))_{t \in I}) \Rightarrow (S_\infty, (T^{-1}_\infty(t))_{t \in I})$ for every finite set $I \subset \R_+$. Since $T^{-1}_p$ and $T^{-1}_\infty$ are non-decreasing and $T^{-1}_\infty$ is continuous (as the local time process at $0$ of $S_\infty$ reflected at its maximum), standard properties of the Skorohod topology imply that the finite-dimensional convergence $T^{-1}_p \fdd T^{-1}_\infty$ actually implies the functional convergence $T^{-1}_p \Rightarrow T^{-1}_\infty$, see for instance~\cite[Theorem VI.2.15]{Jacod03:0}. Thus we obtain $(S_p, T^{-1}_p) \Rightarrow (S_\infty, T^{-1}_\infty)$.
	
	Because $T_p$ for $p \in \N \cup \{\infty\}$ is non-decreasing, we have $(T^{-1}_p)^{-1} = T_p$ and so general properties of the inverse map in the Skorohod topology show that the convergence $(S_p, T^{-1}_p) \Rightarrow (S_\infty, T^{-1}_\infty)$ implies $(S_p, T^{-1}_p, (T_p(t))_{t \in I}) \Rightarrow (S_\infty, T^{-1}_\infty, (T_\infty(t))_{t \in I})$ for any finite set $I \subset \R_+$, see for instance the remark following Theorem~$7.1$ in~\cite{Whitt80:0}. Since $T_p$ is a random walk, we thus obtain $(S_p, T^{-1}_p, T_p) \Rightarrow (S_\infty, T^{-1}_\infty, T_\infty)$.
	
	Since $T_\infty$ is strictly increasing, this convergence implies that
	\[ \left( S_p, T^{-1}_p, T_p, (S_p \circ T_p(t))_{t \in I} \right) \Rightarrow \left( S_\infty, T^{-1}_\infty, T_\infty, (S_\infty \circ T_\infty(t))_{t \in I} \right) \]
	for any finite set $I \subset \R_+$, see for instance~\cite[Lemma~$2.3$]{Kurtz91:1}. Since $S_p \circ T_p = \cZ_p$ for $p \in \N \cup \{\infty\}$ and $\cZ_p$ is a random walk, the previous finite-dimensional result can be strengthened to a functional one and so we get the desired result.
\end{proof}

\subsection{Proof of Theorem {\protect\ref{thm:tightness'}}} \label{proof:tightness-general-CMJ}

We decompose the proof into several steps.

\subsection*{Step 1: proof of tightness and of~\eqref{tightness-1}}

By assumption, $\cV_p \Rightarrow \cV_\infty$ and so the proof of this step amounts to showing that $(\Pi_p)$ is tight (as a $\mathcal W$-valued process) and that any accumulation point is continuous.

The measure $\Pi(n)$ describes the ancestral line of the individual $n$, where the first atoms correspond to the ancestors closest to the root. In particular, we have $\Pi(n) \mid_{\cH(\mu)-1} = \Pi(m) \mid_{\cH(\mu)-1}$ with $\mu$ the most recent common ancestor of $m$ and $n$, whose height $\cH(\mu)$ is given by $\cH(\mu) = \min_{\{m, \ldots, n\}} \cH - \Indicator{\mu \neq m}$. A formal proof of these facts is given in~\cite{Schertzer:0}. Upon scaling we thus obtain $\Pi_p(s)\mid_h = \Pi_p(t) \mid_h$ with $h = \min_{[s,t]} \cH_p - 2 c_p$ and so from the expression~\eqref{eq:distance} of the distance $d$ we get that
\[ d\left(\Pi_p(s), \Pi_p(t)\right) \leq \left \lvert \cH_p(t) - \cH_p(s) \right \rvert + \cH_p(t)\wedge \cH_p(s) - \inf_{[s,t]} \cH_p + 2 c_p. \]
Since $\cH_p \Rightarrow \cH_\infty$ with $\cH_\infty$ continuous, the above inequality gives the tightness of $(\Pi_p)$ and also shows that any accumulation point is almost surely continuous.

\subsection*{Step 2: exchangeability argument}

We now present a simple yet crucial exchangeability argument. For $a < b$ let us write $\omega_a^b = (\omega_k, k = [a], \ldots, [b]-1)$. Then for any $t, k$, exchangeability implies that $\omega_0^k \circ \vartheta^{\cV^{-1}(t)}$ conditioned on $\cV^{-1}(t) \geq k$ is equal in distribution to $\omega_0^k$ conditioned on $\cV^{-1}(t) \geq k$. Indeed, since
\[ \E_p \left( \varphi(\omega_0^k) \mid \cV^{-1}(t) \geq k \right) = \frac{1}{\P_p(\cV^{-1}_p(t) \geq k)} \sum_{\ell \geq k} \E_p \left( \varphi(\omega_0^k) ; \cV^{-1}(t) = \ell \right) \]
in order to prove this claim it is enough to prove that
\[ \E_p \left( \varphi(\omega_0^k) ; \cV^{-1}(t) = \ell \right) = \E_p \left( \varphi(\omega_0^k) \circ \vartheta^\ell ; \cV^{-1}(t) = \ell \right) \]
for every $\ell \geq k$. To see this, write
\[ \E_p \left( \varphi(\omega_0^k) ; \cV^{-1}(t) = \ell \right) = \E_p \left( \varphi(\omega_0^k) ; \sum_{i=0}^{\ell-1} V_i \leq t < \sum_{i=0}^\ell V_i \right) = \E_p \left( \psi(\omega_0, \ldots, \omega_\ell) \right) \]
with
\[ \psi(\omega_0, \ldots, \omega_\ell) = \varphi(\omega_0^k) \Indicator{t - V_i <  \sum_{i=0}^{\ell-1} V_i \leq t}. \]
Since the $\omega_k$'s are i.i.d., we have
\begin{align*}
	\E_p \left( \psi(\omega_0, \ldots, \omega_{\ell-1}, \omega_\ell) \right) & = \E_p \left( \psi(\omega_{\ell-1}, \ldots, \omega_0, \omega_\ell) \right)\\
	& = \E_p \left( \varphi(\omega_{\ell-1}, \ldots, \omega_{\ell-k}) ; \sum_{i=0}^{\ell-1} V_i \leq t < \sum_{i=0}^\ell V_i \right)\\
	& = \E_p \left( \varphi(\omega_0^k) \circ \vartheta^\ell ; \cV^{-1}(t) = \ell \right)
\end{align*}
This exchangeability property will be used in the sequel in the following form: for any measurable mapping $\varphi \geq 0$ acting on finite sequence of sticks, we have
\begin{equation} \label{eq:exchangeability}
	\E_p \left( \varphi(\omega_0^{\varepsilon p}) \circ \vartheta^{p \cV^{-1}_p(t)} ; \cV^{-1}_p(t) \geq \varepsilon \right) = \E_p \left( \varphi(\omega_0^{\varepsilon p}) ; \cV^{-1}_p(t) \geq \varepsilon \right).
\end{equation}

\subsection*{Step 3: proof of~\eqref{tightness-3} and that $\H_\infty(t), \C^\star_\infty(t) < \infty$} Assume without less of generality that $(\Pi_p, \cV_p) \Rightarrow (\Pi_\infty, \cV_\infty)$. By standard properties of the Skorohod topology, this implies that $(\Pi_p, \cV_p, \X_p) \Rightarrow (\Pi_\infty, \cV_\infty, \X_\infty)$ and so in order to prove~\eqref{tightness-3} and that $\H_\infty(t)$ and $\C^\star_\infty(t)$ are almost surely finite, it is enough to prove that
\[ (\Pi_p, \cV_p, \H_p(\tau_p)) \Rightarrow (\Pi_\infty, \cV_\infty, \H_\infty(\tau_\infty)) \ \text{ for } \ \tau_p = t \ \text{ or } \ \tau_p = \cV^{-1}_p(t), \ t \geq 0, p \in \N \cup \{\infty\} \]
with $\H_\infty(\tau_\infty)$ being almost surely finite. We will only show that
\begin{equation}\label{eq:X}
	\H_p(\tau_p) \Rightarrow \H_\infty(\tau_\infty) \ \text{ with } \ \P(\H_\infty(\tau_\infty) < \infty) = 1
\end{equation}
where either $\tau_p = t$, or $\tau_p = \cV^{-1}_p(t)$ ($p \in \N \cup \{\infty\}$, $t > 0$). Indeed, the joint convergence with $(\Pi_p,\cV_p)$ relies on the same arguments but is only notationally more cumbersome. In order to show~\eqref{eq:X}, we will need the next technical lemma. 

\begin{lemma} \label{lemma:conv-terminal-values}
	Let $t_p \to t_\infty$ in $\R_+$, $w_p$ for $p \in \N \cup \{\infty\}$ a $\mathcal{W}$-valued process with $w_p(t)$ non-decreasing for every $t \geq 0$, and $\zeta_p$ the life time of $w_p(t_p)$. If $w_p \to w_\infty$, $w_\infty$ is continuous and
	\begin{equation} \label{eq:cond-terminal-values}
		\lim_{c \uparrow \zeta_\infty} \limsup_{p \to \infty} \left ( w_p(t_p)(\zeta_p-) - w_p(t_p)(c) \right ) = 0
	\end{equation}
	then $w_\infty(t_\infty)(\zeta_\infty-) < \infty$ and $w_p(t_p)(\zeta_p-) \to w_\infty(t_\infty)(\zeta_\infty-)$.
\end{lemma}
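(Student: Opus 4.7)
The plan is to extract from the Skorohod convergence $w_p \to w_\infty$ and the continuity of $w_\infty$ the convergence $w_p(t_p) \to w_\infty(t_\infty)$ in $(\mathcal{W}, d)$, to upgrade this into pointwise convergence at continuity points of the limit using the monotonicity assumption, and then to sandwich $w_p(t_p)(\zeta_p-)$ between $w_p(t_p)(c)$ and $w_p(t_p)(c) + \bigl[w_p(t_p)(\zeta_p-) - w_p(t_p)(c)\bigr]$ for $c < \zeta_\infty$ close to $\zeta_\infty$. Hypothesis~\eqref{eq:cond-terminal-values} will then kill the sandwich width, simultaneously delivering finiteness and convergence.

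First, since $w_\infty$ is continuous and $t_p \to t_\infty$, Skorohod convergence on $\mathcal{W}$-valued paths forces $w_p(t_p) \to w_\infty(t_\infty)$ in the metric $d$ of~\eqref{eq:distance}. Unpacking the definition of $d$, I get $\zeta_p \to \zeta_\infty$ together with
\[
\int_0^{\zeta_p \wedge \zeta_\infty} d_{(u)}\bigl(w_p(t_p)_{\leq u}, w_\infty(t_\infty)_{\leq u}\bigr) \wedge 1 \, du \to 0,
\]
which forces the integrand to tend to zero in Lebesgue measure on $(0, \zeta_\infty)$.

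The key step, and main obstacle, is to show that this integral-form convergence implies $w_p(t_p)(c) \to w_\infty(t_\infty)(c)$ at every continuity point $c < \zeta_\infty$ of $w_\infty(t_\infty)$. I would argue by contradiction: any subsequence along which this convergence fails admits a further subsequence along which the convergence in measure becomes almost-everywhere convergence, so pick such a $u > c$; on $[0,u]$, the restrictions $w_p(t_p)_{\leq u}$ are non-decreasing c\`adl\`ag functions converging in the Skorohod sense to a function continuous at $c$, and Lemma~\ref{lemma:continuity-properties} (applied via the stopping operator at $c$) then yields pointwise convergence at $c$, a contradiction. Monotonicity of each $w_p(t)$ is precisely what makes this bridge from the integral form to pointwise values at continuity points possible.

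With this pointwise statement in hand, for any continuity point $c < \zeta_\infty$ of $w_\infty(t_\infty)$ and $p$ large enough that $c < \zeta_p$, monotonicity gives
\[
w_p(t_p)(c) \leq w_p(t_p)(\zeta_p-) \leq w_p(t_p)(c) + \bigl[w_p(t_p)(\zeta_p-) - w_p(t_p)(c)\bigr].
\]
Passing to liminf and limsup in $p$ yields
\[
w_\infty(t_\infty)(c) \leq \liminf_{p} w_p(t_p)(\zeta_p-) \leq \limsup_{p} w_p(t_p)(\zeta_p-) \leq w_\infty(t_\infty)(c) + \eta(c),
\]
with $\eta(c) = \limsup_p [w_p(t_p)(\zeta_p-) - w_p(t_p)(c)]$. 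Letting $c \uparrow \zeta_\infty$ along continuity points (dense since $w_\infty(t_\infty)$ is non-decreasing), hypothesis~\eqref{eq:cond-terminal-values} forces $\eta(c) \to 0$ while $w_\infty(t_\infty)(c) \to w_\infty(t_\infty)(\zeta_\infty-)$, which simultaneously gives $w_\infty(t_\infty)(\zeta_\infty-) < \infty$ and $w_p(t_p)(\zeta_p-) \to w_\infty(t_\infty)(\zeta_\infty-)$.
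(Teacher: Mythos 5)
Your proof is correct and follows essentially the same route as the paper: establish $w_p(t_p)(c)\to w_\infty(t_\infty)(c)$ at continuity points $c<\zeta_\infty$ (which the paper simply asserts from $w_p(t_p)\to w_\infty(t_\infty)$ in $\mathcal W$, while you unpack the metric $d$ in detail), then squeeze the terminal values using hypothesis~\eqref{eq:cond-terminal-values} and let $c\uparrow\zeta_\infty$. The only minor quibble is that monotonicity is not what bridges Skorohod convergence on $[0,u]$ to pointwise convergence at an interior continuity point (that is a standard fact for arbitrary c\`adl\`ag functions); its real role is in the inequality $w_p(t_p)(\zeta_p-)\geq w_p(t_p)(c)$ underlying your sandwich.
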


\begin{proof}
	Let $c < \zeta_\infty$ with $w_\infty(t_\infty)$ continuous at $c$: then $w_p(t_p) \to w_\infty(t_\infty)$ in $\mathcal{W}$ (which holds because $w_p \to w_\infty$ with $w_\infty$ continuous) implies $w_p(t_p)(c) \to w_\infty(t_\infty)(c)$ (note that $c < \zeta_p$ for $p$ large enough as $w_p \to w_\infty$ implies $\zeta_p \to \zeta_\infty$). Since $w_\infty(t_\infty)(c) < \infty$, this implies in view of~\eqref{eq:cond-terminal-values} that $\limsup_{p \to \infty} w_p(t_p)(\zeta_p-) < \infty$. Next, since $w_p(t_p)(\zeta_p-) \geq w_p(t_p)(c)$ for $p$ large enough, we obtain
	\[ w_\infty(t_\infty)(c) \leq \limsup_{p \to \infty} w_p(t_p)(\zeta_p-) \]
	and then
	\[ w_\infty(t_\infty)(\zeta_\infty-) \leq \limsup_{p \to \infty} w_p(t_p)(\zeta_p-) \]
	by letting $c \uparrow \zeta_\infty$ along continuity points. This shows that $w_\infty(t_\infty)(\zeta_\infty-) < \infty$ and we now proceed to showing that $w_p(t_p)(\zeta_p-) \to w_\infty(t_\infty)(\zeta_\infty-)$. For $p \in \N \cup \{\infty\}$ and $c < \zeta_\infty$ let $\delta_p(c) = \left \lvert w_p(t_p)(\zeta_p-) - w_p(t_p)(c) \right \rvert$: then
	\[ \left \lvert w_p(t_p)(\zeta_p-) - w_\infty(t_\infty)(\zeta_\infty-) \right \rvert \leq \delta_p(c) + \delta_\infty(c) + \left \lvert w_p(t_p)(c) - w_\infty(t_\infty)(c) \right \rvert. \]
	Proceeding with similar arguments as above we obtain the result by letting first $p \to \infty$ and then $c \uparrow \zeta_\infty$.
\end{proof}

Since $\H_p(\tau_p) = \Pi_p(t) (\cH_p(\tau_p)-)$, proving~\eqref{eq:X} is the same as proving
\begin{equation}\label{eq:tt}
	\Pi_p(\tau_p) (\cH_p(\tau_p)-) \Rightarrow \Pi_\infty(\tau_\infty) (\cH_\infty(\tau_\infty)-) \ \text{ with } \ \P \left( \Pi_\infty(\tau_\infty) (\cH_\infty(\tau_\infty)-) < \infty \right) = 1.
\end{equation}
By standard arguments (e.g., Skorohod's representation theorem), Lemma~\ref{lemma:conv-terminal-values} implies that in order to prove this, it is enough to prove that
\[ \limsup_{p \to \infty} \ \P_p \Big( \Pi_p(\tau_p) (\cH_p(\tau_p)-) - \Pi_p(\tau_p) (\cH_p(\tau_p)-c) \geq \eta \Big) \mathop{\longrightarrow}_{c \to 0} 0 \]
for every $\eta > 0$. Since $\Pi(n) = \cL(\hat R^n \mid_{\cH(n)})$ according to Theorem~\ref{thm:H-Pi}, we have
\[ \Pi_p(\tau_p) (\cH_p(\tau_p)-) - \Pi_p(\tau_p) (\cH_p(\tau_p)-c) \leq \hat R^{\tau_p}_p(c) \]
which reduces the proof of~\eqref{eq:tt} to showing that
\begin{equation} \label{eq:goal-R}
	\limsup_{p \to \infty} \ \P \left( \hat R^{\tau_p}_p(c) \geq \eta \right) \mathop{\longrightarrow}_{c \to 0} 0
\end{equation}
for every $\eta > 0$. If $\tau_p = t$, then $\hat R^{\tau_p}_p(c)$ is identical in law to $R_p(c)$, and so~\eqref{eq:goal-R}
follows by the convergence of $R_p$ and the continuity of $R_\infty$ at $0$.

Let us now prove~\eqref{eq:goal-R} for $\tau_p = \cV^{-1}_p(t)$. Since $T^{-1}_p(T_p(t)) = t$ and $R_p \circ T^{-1}_p$ is increasing, in the event $\{T_p(c) \leq \varepsilon\}$ we have
\[ R_p(c) = R_p \circ T^{-1}_p (T_p(c)) \leq R_p \circ T^{-1}_p (\varepsilon). \]
In particular,
\begin{align*}
	\P_p \left( R_p(c) \circ \vartheta^{\tau_p} \geq \eta \right) & \leq \P_p \left( R_p \circ T^{-1}_p(\varepsilon) \circ \vartheta^{\tau_p} \geq \eta \right) + \P_p \left( T_p(c) \circ \vartheta^{\tau_p} \geq \varepsilon \right)\\
	& \leq \P_p \left( R_p \circ T^{-1}_p(\varepsilon) \circ \vartheta^{\tau_p} \geq \eta, \cV^{-1}_p(t) \geq \varepsilon \right)\\
	& \hspace{20mm} + \P_p \left( T_p(c) \circ \vartheta^{\tau_p} \geq \varepsilon, \cV^{-1}_p(t) \geq \varepsilon \right)\\
	& \hspace{20mm} + 2 \P_p \left( \cV^{-1}_p(t) \leq \varepsilon \right).
\end{align*}
By definition, we can write $R_p \circ T^{-1}_p(\varepsilon)$ as a function of $\omega_0^{p \varepsilon}$, i.e., $R_p \circ T^{-1}_p(\varepsilon) = \varphi(\omega_0^{p \varepsilon})$ for some measurable mapping $\varphi$. The exchangeability identity~\eqref{eq:exchangeability} thus gives
\[ \P_p \left( R_p \circ T^{-1}_p(\varepsilon) \circ \vartheta^{\tau_p} \geq \eta, \cV^{-1}_p(t) \geq \varepsilon \right) = \P_p \left( R_p \circ T^{-1}_p(\varepsilon) \geq \eta, \cV^{-1}_p(t) \geq \varepsilon \right) \]
and, for the same reasons,
\[ \P_p \left( T_p(c) \circ \vartheta^{\tau_p} \geq \varepsilon, \cV^{-1}_p(t) \geq \varepsilon \right) = \P_p \left( T_p(c) \geq \varepsilon, \cV^{-1}_p(t) \geq \varepsilon \right) \]
which thus leads to the bound
\[ \P_p \left( R_p(c) \circ \vartheta^{\tau_p} \geq \eta \right) \leq \P_p \left( R_p \circ T^{-1}_p(\varepsilon) \geq \eta \right) + \P_p \left( T_p(c) \geq \varepsilon \right) + 2 \P_p \left( \cV^{-1}_p(t) \leq \varepsilon \right). \]
Letting first $p \to \infty$, then $c \to 0$ and finally $\eta \to 0$ yields~\eqref{eq:goal-R} in the case $\tau_p = \cV^{-1}_p(t)$, which achieves the proof of this step.

\subsection*{Step 4: proof that $\H_\infty(t), \C^\star_\infty(t) > 0$}

We now prove that $\H_\infty(t)$ and $\C^\star_\infty(t)$ are almost surely strictly positive. For $\H_\infty(t)$, this comes from the convergence $\H_p(t) \Rightarrow \H_\infty(t)$ which we have just proved, together with the fact that $\H_p(t)$ is equal in distribution to $R_p \circ \bTm_p(t)$ with $R_p$ and $\bTm_p$ converging to two subordinators (this does not imply that $R_p \circ \bTm_p(t) \Rightarrow R_\infty \circ T^{-1}_\infty(t)$, but it does imply that any accumulation point is necessarily almost surely $>0$).

Let us now prove that
\[ \liminf_{p \to \infty} \ \P_p \left( \C^\star_p(t) \geq \eta \right) \mathop{\longrightarrow}_{\eta \downarrow 0} 1. \]
We use the same exchangeability arguments as in the previous step. We have
\[ \C^\star_p(t) = \H_p(\cV^{-1}_p(t)) = \left( R_p \circ \bTm_p \circ \vartheta^{p \cV^{-1}_p(t)} \right) (\cV^{-1}_p(t)). \]
Let $\varphi_p$ be the measurable mapping such that $\varphi_p(\omega_0^n) = R_p \circ \bTm_p(n/p)$, so that $\varphi_p(\omega_0^{pt}) = R_p \circ \bTm_p(t)$: then for any $u \geq t$, in the event $\{ \cV^{-1}_p(t) = u \}$ we have
\[ \left( R_p \circ \bTm_p \circ \vartheta^{p \cV^{-1}_p(t)} \right) (\cV^{-1}_p(t)) = \left( R_p \circ \bTm_p \circ \vartheta^{p u} \right) (u) = \varphi_p(\omega_0^{pu}) \circ \vartheta^{p u} \geq \varphi_p(\omega_0^{pt}) \circ \vartheta^{p u} \]
with the last inequality following from the monotonicity of $n \mapsto \varphi_p(\omega_0^n)$. In particular,
\[ \P_p \left( \C^\star_p(t) \geq \eta \right) \geq \P_p \left( \H_p(\cV^{-1}_p(t)) \geq \eta, \cV^{-1}_p(t) \geq \varepsilon \right) \geq \P_p \left( \varphi_p(\omega_0^{pt}) \circ \vartheta^{p \cV^{-1}_p(t)} \geq \eta, \cV^{-1}_p(t) \geq \varepsilon \right) \]
and~\eqref{eq:exchangeability} finally gives
\[ \P_p \left( \C^\star_p(t) \geq \eta \right) \geq \P_p \left( \varphi_p(\omega_0^{pt}) \geq \eta, \cV^{-1}_p(t) \geq \varepsilon \right) \geq \P_p \left( \H_p(t) \geq \eta \right) - \P_p \left( \cV^{-1}_p(t) \leq \varepsilon \right). \]
Letting first $p \to \infty$, then $\eta \downarrow 0$ and finally $\varepsilon \to 0$ gives the result, since we know that $\H_p(t) \Rightarrow \H_\infty(t)$ with $\P(\H_\infty(t) > 0) = 1$.

\section{Convergence of the height process} \label{sub:proof-of-thm:main-result-2}

In this section we state and prove the following triangular version of Theorem~\ref{thm:height}.

\begin{theorembis} {\ref{thm:height}'} [Triangular version of Theorem~\ref{thm:height}] \label{thm:height'}
	If Assumptions~\ref{ass-G'},~\ref{ass-C1'} and~\ref{ass-C2'} are satisfied and the asymptotic independence relation
	\begin{equation} \label{eq:independence-condition'}
		\P_p \left( c_p R(1) \geq \varepsilon \mid T(1) \geq \delta p \right) \to 0 \ \text{ for every } \ \varepsilon, \delta > 0 \tag{\ref{eq:independence-condition}'}
	\end{equation}
	holds, then $\Pi_p \Rightarrow \Pi_\infty$ where $\Pi_\infty$ is the $\psi/\phi$ snake. In particular, $\H_p \fdd \lvert \Pi_\infty(\, \cdot \,) \rvert$.
\end{theorembis}

The rest of the section is devoted to the proof of Theorem~\ref{thm:height'}: until the end of this section, we thus assume that all the assumptions of Theorem~\ref{thm:height'} hold. The core of the proof is given in this section, and the lengthy proof of a technical result (Proposition~\ref{prop:more-ind}) is in Section~\ref{sect:preliminaries}.

First, we note that the assumptions made imply that $(R_p, T_p, \cZ_p) \Rightarrow (R_\infty, T_\infty, \cZ_\infty)$ with $R_\infty \ci (\cZ_\infty, T_\infty)$. Indeed, $R_p \Rightarrow R_\infty$ by assumption and $(T_p, \cZ_p) \Rightarrow (T_\infty, \cZ_\infty)$ by Proposition~\ref{prop:convergence-genealogy}. Moreover, the independence condition~\eqref{eq:independence-condition'} means precisely that in the limit, $R_\infty$ and $T_\infty$ do not jump simultaneously, and as any accumulation point of $(R_p, T_p, \cZ_p)$ is necessarily a subordinator, this implies that $R_\infty \ci T_\infty$ from which we get $R_\infty \ci (T_\infty, \cZ_\infty)$.

In order to prove Theorem \ref{thm:height'}, we will need the following stronger independence property. The proof is rather long and technical, and thus postponed to Section~\ref{sect:preliminaries}.

\begin{prop}\label{prop:more-ind}
	Assume that the assumptions of Theorem~\ref{thm:height'} hold. Then for any finite set $J\subset \R$ we have $R_\infty \ci (\cZ_\infty^t, T_\infty^t)_{t\in J}$.
\end{prop}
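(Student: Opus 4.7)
My plan exploits the following pre-limit structure: the shifted ladder processes $(\cZ_p^t, T_p^t)_{t\in J}$ are measurable with respect to $\cG_p := \sigma(|\cP_j|, j\in \Z)$, the $\sigma$-algebra generated by the Lukasiewicz path, because they depend on the $\omega_j$'s only through the masses $|\cP_j|$. In contrast, $R_p$ records positions of specific atoms of the measures $\cP_{T(k)-1}$, whose conditional laws given $\cG_p$ depend only on the $\cG_p$-measurable quantities $(\cZ(\{k\}), |\cP_{T(k)-1}|)$ through~\eqref{eq:F}. Therefore, conditionally on $\cG_p$, $R_p$ is automatically independent of $(\cZ_p^t, T_p^t)_{t\in J}$; the whole question reduces to showing that this conditioning does not distort the scaling limit of $R_p$.

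I would first extract a subsequence along which $(R_p, (\cZ_p^t, T_p^t)_{t\in J})$ converges jointly. Tightness is free: $R_p\Rightarrow R_\infty$ by~\ref{ass-C1'}, and tightness of the shifted ladder processes comes from Proposition~\ref{prop:convergence-genealogy} applied to the shifted i.i.d.\ sticks.

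The heart of the argument would be the conditional concentration
\begin{equation}\label{eq:key-cond}
\E_p\!\left[\,e^{-\lambda R_p(y)}\,\big|\,\cG_p\right] \,\xrightarrow[p\to\infty]{\P}\, e^{-y\phi(\lambda)} \qquad (\lambda,y>0).
\end{equation}
Conditionally on $\cG_p$ the left-hand side factorizes as $\prod_{k=1}^{[y/g_p]} F_p(\cZ(\{k\}),|\cP_{T(k)-1}|;\lambda c_p)$ with $F_p(z,n;\mu)=\E[e^{-\mu A_z(\cP^*)}\mid |\cP^*|=n]$, and by the strong Markov property of the random walk $S$ the pairs $(\cZ(\{k\}),|\cP_{T(k)-1}|)_{k\geq 1}$ are i.i.d.\ under $\P_p$. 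The mean of $\sum_k (1-F_p(\cZ(\{k\}),|\cP_{T(k)-1}|;\lambda c_p))$ converges to $y\phi(\lambda)$ thanks to the unconditional convergence $R_p\Rightarrow R_\infty$; the real work is the concentration around this mean. Because $c_p R(1)$ is heavy-tailed a crude variance bound is insufficient, and this is exactly where~\eqref{eq:independence-condition'} enters: it disentangles, asymptotically, the event ``large chronological contribution $c_p R(1)$'' from the event ``large genealogical jump $T(1)$'', which is what caused the dangerous variance contributions.

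Once~\eqref{eq:key-cond} is established, the $\cG_p$-measurability of $(\cZ_p^t, T_p^t)_{t\in J}$ immediately yields, for any bounded continuous $g$,
\[ \E_p\!\left[e^{-\lambda R_p(y)}\,g\!\left((\cZ_p^t, T_p^t)_{t\in J}\right)\right] = \E_p\!\left[g\!\left((\cZ_p^t, T_p^t)_{t\in J}\right)\,\E_p[e^{-\lambda R_p(y)}\mid \cG_p]\right] \longrightarrow e^{-y\phi(\lambda)}\,\E\!\left[g\!\left((\cZ_\infty^t, T_\infty^t)_{t\in J}\right)\right]. \]
Since $R_\infty$ is a subordinator, its law is characterized by its one-dimensional Laplace transforms $\lambda\mapsto e^{-y\phi(\lambda)}$, so factorization over this separating family gives the desired independence $R_\infty \ci (\cZ_\infty^t, T_\infty^t)_{t\in J}$. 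The main obstacle I foresee is precisely the quantitative concentration in~\eqref{eq:key-cond}: the heavy-tailed nature of $R(1)$ makes a direct second-moment estimate ineffective, so one would need to isolate the contributions to the variance coming from events excluded by~\eqref{eq:independence-condition'} and show they provide the required cancellation.
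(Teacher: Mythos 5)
Your setup is correct as far as it goes: the shifted ladder processes are measurable with respect to $\cG_p=\sigma(\lvert\cP_j\rvert,\,j\in\Z)$, and conditionally on $\cG_p$ the increments $R(\{k\})=A_{\cZ(\{k\})}(\cP_{T(k)-1})$ are independent with the conditional laws you read off from~\eqref{eq:F}. The fatal problem is that your key step --- the convergence in probability of $\E_p[e^{-\lambda R_p(y)}\mid\cG_p]$ to the \emph{constant} $e^{-y\phi(\lambda)}$ --- is not just delicate: it is false under the hypotheses of the proposition. Take $\lvert\cP^*\rvert$ critical with $\P(\lvert\cP^*\rvert>n)\approx n^{-b}$ for some $b>2$ (finite variance, $\gamma=2$, $g_p\asymp p^{-1/2}$), and place \emph{all} atoms of $\cP^*$ at the single position $f(\lvert\cP^*\rvert)$ with $f(n)=n^{(b-1)/\beta}$ and $V^*=f(\lvert\cP^*\rvert)$. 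Since $\lvert\cP_{T(1)-1}\rvert$ is size-biased (by~\eqref{eq:F-0}), $R(1)=f(\lvert\cP_{T(1)-1}\rvert)$ has a regularly varying tail of index $\beta$, $V^*$ has tail index $\alpha=b\beta/(b-1)<1$ for $\beta$ small, and one checks directly (consistently with Theorem~\ref{thm:finite-variance}) that~\eqref{eq:independence-condition'} and~\eqref{eq:independence-condition-2'} hold. Yet here $R_p$ is $\cG_p$-measurable, so $\E_p[e^{-\lambda R_p(y)}\mid\cG_p]=e^{-\lambda R_p(y)}\Rightarrow e^{-\lambda R_\infty(y)}$, a nondegenerate limit. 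In your notation, no conditional averaging occurs: $\xi_{p,1}=1-e^{-\lambda c_pR(1)}$, $\E[\xi_{p,1}^2]\asymp\P(c_pR(1)>1)\asymp c_p^\beta\asymp g_p$, and the variance of $\sum_k\xi_{p,k}$ stays of order one. Condition~\eqref{eq:independence-condition'} cannot repair this: it controls the joint tail of $(c_pR(1),T(1)/p)$, whereas your concentration requires the conditional law of $R(1)$ given $(\cZ(1),\lvert\cP_{T(1)-1}\rvert)$ to be asymptotically non-informative --- a genuinely different statement, and one that fails here.

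The conceptual point is that the proposition asserts independence of $R_\infty$ from the specific functionals $(\cZ^t_\infty,T^t_\infty)_{t\in J}$, not degeneracy of $R_\infty$ conditionally on the whole genealogical $\sigma$-field; the example shows $R_\infty$ can be a measurable function of the genealogy and still be independent of every shifted ladder process. Any proof must therefore exploit the structure of the ladder functionals themselves rather than condition them away, which is what the paper does: starting from the marginal independence $R_\infty\ci(T_\infty,\cZ_\infty)$ (deduced from~\eqref{eq:independence-condition'} via absence of common jumps), it proceeds by induction on $\lvert J\rvert$, decomposing $R_\infty$, $T_\infty$, $\cZ_\infty$ at the ladder epoch straddling a newly added time $\tau$ (Lemma~\ref{prop:discrete-snake}, Propositions~\ref{prop:T-Z} and~\ref{lem:constinuous-snake-L}), combining pre-limit measurability with respect to $\cF_{<[pt]}$ versus $\cF_{\geq[pt]}$ with an excursion-theoretic conditional independence (Corollary~\ref{lemma:subtelty}), and concluding with the splicing Lemma~\ref{lem:sub}. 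Nothing in your proposal substitutes for that mechanism, so the argument as written does not prove the proposition.
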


\begin{remark}
	In Theorem~\ref{thm:extended-BH}, we introduce a simple extension of a Bellman--Harris forest. In this case, one can check that $R_p \ci (T_p, \cZ_p)$ but that $R_p$ is in general not independent from $(\cZ_p^t, T_p^t)_{t\in J}$ for $\lvert J \rvert>1$. As a consequence, the passage from $R_\infty \ci (T_\infty, \cZ_\infty)$ to Proposition~\ref{prop:more-ind} only holds asymptotically and actually requires some delicate arguments developed in Section~\ref{sect:preliminaries}.
\end{remark}

Recall from Theorem \ref{thm:tightness'} that $(\Pi_p)$ is tight. Since by assumptions we have $(S_p, \cH_p) \Rightarrow (S_\infty, \cH_\infty)$ and $(R_p, T_p, \cZ_p) \Rightarrow (R_\infty, T_\infty, \cZ_\infty)$ with $R_\infty \ci (T_\infty, \cZ_\infty)$, it follows by duality and the previous result that the sequence
\[ \left ( S_p, \cH_p, \Pi_p, (\cz^t_p, \cT^t_p, \hat R^t_p)_{t \in \Q}\right) \]
is tight. Consider any accumulation point and assume without loss of generality in the rest of this section that there is almost sure convergence
\begin{equation} \label{eq:big-conv}
	\left ( S_p, \cH_p, \Pi_p, (\cz^t_\infty, \cT^t_p, \hat R^t_p)_{t \in \Q} \right)
	\to \left ( S_\infty, \cH_\infty, \Pi_\infty, (\cz^t_\infty, \cT^t_\infty, \hat R^t_\infty)_{t \in \Q}\right)
\end{equation}
with $\hat R^t_\infty \ci (\cz^t_\infty, \cT^t_\infty)$ for any $t \in \Q$ and $\Pi_\infty$ continuous with life-time process $\cH_\infty$. Since
\begin{equation} \label{eq:formula-T-H}
	(\hat T^t_\infty)^{-1}(t-s) = \cH_\infty(t) - \min_{[s,t]} \cH_\infty, \ 0 \leq s \leq t \in \Q
\end{equation}
(see for instance~\cite[Lemma~$5.3$]{Schertzer:0}) we obtain in particular $\hat R^t_\infty \ci \cH_\infty(t)$. Actually, 
as a consequence of Proposition \ref{prop:more-ind}, a stronger result holds.

\begin{lemma}
	For any $t \in \Q$ we have $\hat R^t_\infty \ci \cH_\infty$.
\end{lemma}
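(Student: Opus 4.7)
The plan is to reduce the claim to a finite-dimensional independence statement, and then extract it from Proposition~\ref{prop:more-ind} via a time-reversal symmetry.

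First, since $\cH_\infty$ is almost surely continuous (as an accumulation point of $\cH_p$ in~\eqref{eq:big-conv}), the $\sigma$-algebra it generates coincides with that of $(\cH_\infty(q))_{q \in \Q \cap \R_+}$. A routine monotone class argument then reduces the claim to showing
$$\hat R^t_\infty \ci \bigl(\cH_\infty(q_1), \ldots, \cH_\infty(q_k)\bigr)$$
for every finite collection $\{q_1, \ldots, q_k\} \subset \Q \cap \R_+$. Next, I would invoke the identity $\cH_\infty(q) = (\hat T^q_\infty)^{-1}(q)$ (stated just before Proposition~\ref{prop:convergence-genealogy}), which exhibits each $\cH_\infty(q_i)$ as a measurable function of $\hat T^{q_i}_\infty$, so that it suffices to prove the stronger statement
$$\hat R^t_\infty \ci \bigl(\hat T^{q_1}_\infty, \ldots, \hat T^{q_k}_\infty\bigr).$$

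The crux of the argument is the shift identity
$$\Theta_s(\hat S^t_\infty) = \hat S^{t-s}_\infty, \qquad s \in \R,$$
which is immediate from $\hat S^t_\infty(u) = S_\infty(t)-S_\infty(t-u)$ extended to $u \in \R$. Consequently the ladder time and ladder height processes of $\Theta_s(\hat S^t_\infty)$ are exactly $\hat T^{t-s}_\infty$ and $\hat \cZ^{t-s}_\infty$. I would then apply Proposition~\ref{prop:more-ind} to the time-reversed process $\hat S^t_\infty$: in the prelimit the sticks $(\omega_n)_{n \in \Z}$ are i.i.d., hence the dual sequence used to build $\hat S^t_p$ has the same joint law under $\P_p$ as the forward sequence, and this distributional identity is inherited in the limit along~\eqref{eq:big-conv}. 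Proposition~\ref{prop:more-ind} applied to the dual picture therefore yields, for every finite $J \subset \R$,
$$\hat R^t_\infty \ci \bigl(\hat T^{t-s}_\infty, \hat \cZ^{t-s}_\infty\bigr)_{s \in J}.$$
Choosing $J = \{t - q_1, \ldots, t - q_k\} \subset \R$ and projecting onto the $\hat T$ coordinates closes the argument in combination with the first two steps.

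I expect the main obstacle to be the rigorous transfer of Proposition~\ref{prop:more-ind} to the dual process. While the underlying symmetry is transparent from the i.i.d.\ nature of the sticks in the prelimit, Proposition~\ref{prop:more-ind} is a limiting distributional statement, so one must either rerun its proof with the time-reversed sequence or, more conceptually, establish a joint time-reversal invariance in distribution for the full prelimit tuple $(R_p, T_p, \cZ_p, S_p)$ together with all its shifts, then pass this invariance to the limit. Once this symmetry is in place, the remainder of the proof is essentially bookkeeping.
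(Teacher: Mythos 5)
Your proposal is correct and follows essentially the same route as the paper: the paper likewise applies Proposition~\ref{prop:more-ind}, transfers the independence to the dual picture via the identity $T^u_\infty \circ \dualoperator^t = \hat T^{t-u}_\infty$ (justified by the distributional invariance of the i.i.d.\ stick sequence under time reversal, the point you flag as the main obstacle and which the paper dispatches in one line), and concludes with $\cH_\infty(u) = (\hat T^u_\infty)^{-1}(u)$. Your explicit monotone-class reduction to finite-dimensional marginals of $\cH_\infty$ is left implicit in the paper but is the right way to make that last step rigorous.
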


\begin{proof}
	According to Proposition~\ref{prop:more-ind} we have $R_\infty \ci (T_\infty^u, u \in J)$ for any finite set $J \subset \R$, which implies $\hat R^t_\infty \ci (T_\infty^u \circ \dualoperator^t, u \in J)$ for every $t \in \Q$. Since $T^u_\infty \circ \dualoperator^t = \hat T^{t-u}_\infty$ (i.e., reverting time from $t$ and then shifting by $u$ is the same as reverting time from $t-u$) and $\cH_\infty(u) = (\hat T^u_\infty)^{-1}(u)$ we obtain the result.
\end{proof}

\begin{lemma} \label{lemma:formula-Pi-R}
	For any $t \in \Q$ we have $\Pi_\infty(t) = \cL(\hat R^t_\infty \mid_{\cH_\infty(t)})$.
\end{lemma}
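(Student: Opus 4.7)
The plan is to pass to the limit in the exact discrete identity $\Pi_p(t) = \cL(\hat R^t_p \mid_{\cH_p(t)})$, which is the scaled form of the relation $\Pi(n) = \cL(R \mid_{\bTm(n)}) \circ \dualoperator^n$ from Theorem~\ref{thm:H-Pi}. The main engine will be the second part of Lemma~\ref{lemma:continuity-properties}, applied along the almost sure joint convergence~\eqref{eq:big-conv}.

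First I would extract from~\eqref{eq:big-conv} together with the continuity of $\cH_\infty$ and of $\Pi_\infty$ that, almost surely, $\hat R^t_p \to \hat R^t_\infty$ in the Skorohod topology, $\cH_p(t) \to \cH_\infty(t)$ in $\R$, and $\Pi_p(t) \to \Pi_\infty(t)$ in $\mathcal{W}$. To apply Lemma~\ref{lemma:continuity-properties} to the pair $(\hat R^t_p, \cH_p(t))$ and thereby conclude
\[ \cL \big( \hat R^t_p \mid_{\cH_p(t)} \big) \to \cL \big( \hat R^t_\infty \mid_{\cH_\infty(t)} \big), \]
I need to verify that, almost surely, (i) $\pi(\hat R^t_\infty) = \infty$ and (ii) $\hat R^t_\infty$ has no atom at $\cH_\infty(t)$. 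Property (i) is immediate since $\hat R^t_\infty$ is distributed as a subordinator with Laplace exponent $\phi$, and so has almost surely unbounded support.

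The delicate point is (ii), and this is where the immediately preceding lemma enters in an essential way: it provides $\hat R^t_\infty \ci \cH_\infty$, and in particular $\hat R^t_\infty \ci \cH_\infty(t)$. Since the jump set of a subordinator is almost surely a countable subset of $(0,\infty)$, conditioning on $\cH_\infty(t) = h$ gives $\P(\hat R^t_\infty \text{ has an atom at } h) = 0$ for every $h$, and integrating over the law of $\cH_\infty(t)$ establishes (ii). With (i) and (ii) in hand, Lemma~\ref{lemma:continuity-properties} delivers the convergence above, and uniqueness of limits combined with $\Pi_p(t) \to \Pi_\infty(t)$ then yields the claimed identity $\Pi_\infty(t) = \cL(\hat R^t_\infty \mid_{\cH_\infty(t)})$ almost surely. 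The only real obstacle is (ii); everything else is routine continuity bookkeeping already in place thanks to~\eqref{eq:big-conv} and the preceding independence lemma.
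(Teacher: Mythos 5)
Your proposal is correct and follows essentially the same route as the paper: pass to the limit in the discrete identity $\Pi_p(t) = \cL(\hat R^t_p \mid_{\cH_p(t)})$ via Lemma~\ref{lemma:continuity-properties}, using the independence $\hat R^t_\infty \ci \cH_\infty(t)$ to rule out a jump of $\hat R^t_\infty$ at $\cH_\infty(t)$, and then identify the limit with $\Pi_\infty(t)$ by the convergence $\Pi_p(t) \to \Pi_\infty(t)$. Your write-up is in fact slightly more careful than the paper's in explicitly verifying the hypothesis $\pi(\hat R^t_\infty) = \infty$ of Lemma~\ref{lemma:continuity-properties}.
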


\begin{proof}
	Since $\hat R^t_\infty \ci \cH_\infty(t)$ and $\hat R^t_\infty$ is a subordinator, $\hat R^t_\infty$ does not jump at time $\cH_\infty(t)$ and so Lemma~\ref{lemma:continuity-properties} and the convergence~\eqref{eq:big-conv} imply that
	\[ \Pi_p(t) = \cL \left( \hat R^t_p \mid _{\cH_p(t)} \right) \to \cL \left( \hat R^t_\infty \mid _{\cH_\infty(t)} \right) \]
	with the convergence holding in the usual Skorohod space. On the other hand, since $\Pi_p \to \Pi_\infty$ with $\Pi_\infty$ continuous, we have also $\Pi_p(t) \to \Pi_\infty(t)$, this time in $\mathcal{W}$. From these two convergences one gets the desired result.
\end{proof}

For $s \leq t$ with $t \in \Q$, we define
\[ \tilde \Pi_\infty(s,t) = \cL \big( \hat R^t_\infty \mid_{(\hat T^t_\infty)^{-1}(t-s)} \big).\]
Intuitively, $\tilde \Pi_\infty(s,t)$ is the section of the chronological spine from $t$ that does not overlap the 
chronological spine from $s$.

\begin{lemma} \label{lemma:snake-like-2}
	Let $s \leq t$ with $t \in \Q$. Then $\Pi_\infty(t) = \left[\Pi_\infty(s), \ \tilde \Pi_\infty(s,t) \right]_{\min_{[s,t]}\cH_\infty}$. Moreover, conditionally on $\cH_\infty$, $\tilde \Pi_\infty(s,t)$ is a subordinator with Laplace exponent $\phi$ killed at $(\hat T^t_\infty)^{-1}(t-s)$ and independent from $(\Pi_\infty(u), u \leq s)$.
\end{lemma}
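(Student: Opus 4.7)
The plan is to establish the decomposition $\Pi_\infty(t) = [\Pi_\infty(s), \tilde\Pi_\infty(s,t)]_{\min_{[s,t]} \cH_\infty}$ by passing a corresponding discrete identity to the limit, and then to extract the distributional and independence properties of $\tilde\Pi_\infty(s,t)$ from Proposition~\ref{prop:more-ind} together with basic properties of subordinators.

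For the decomposition, I would start from a discrete ``snake'' identity. As recalled in Step~1 of the proof of Theorem~\ref{thm:tightness'}, for $m \leq n$ the spines $\Pi(m)$ and $\Pi(n)$ share their initial segment up to the height of the most recent common ancestor $\mu$, so that $\Pi(n) \mid_{\cH(\mu) - 1} = \Pi(m) \mid_{\cH(\mu) - 1}$, with $\cH(\mu) = \min_{\{m, \dots, n\}} \cH - \Indicator{\mu \neq m}$. Combining this with $\Pi(n) = \cL(\hat R^n \mid_{\cH(n)})$ from Theorem~\ref{thm:H-Pi} yields a discrete decomposition of the form $\Pi(n) = [\Pi(m), \cL(\hat R^n \mid_{\cH(n) - \cH(\mu) + \Indicator{\mu \neq m}})]_{\cH(\mu) - \Indicator{\mu \neq m}}$. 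After rescaling by $(g_p, c_p)$ and using the almost sure convergence~\eqref{eq:big-conv} together with Lemma~\ref{lemma:continuity-properties} (the relevant arguments $\min_{[s,t]} \cH_\infty$ being continuity points since $\cH_\infty$ is continuous, and $\hat R^t_\infty$ a.s.\ not jumping at the $\cH_\infty$-measurable level $\min_{[s,t]} \cH_\infty$ by Prop.~\ref{prop:more-ind} and the independence of $\hat R^t_\infty$ from $\cH_\infty$), the identity passes to the limit. The identification of the killing time as $(\hat T^t_\infty)^{-1}(t-s)$ then follows from \eqref{eq:formula-T-H}.

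For the distributional properties, I work conditionally on $\cH_\infty$. By Proposition~\ref{prop:more-ind}, $\hat R^t_\infty$ is a subordinator with Laplace exponent $\phi$ independent from the family of $(T^u_\infty, \cZ^u_\infty)$, and hence from $\cH_\infty$. Since $(\hat T^t_\infty)^{-1}(t-s)$ is $\cH_\infty$-measurable, conditionally on $\cH_\infty$ the killed process $\hat R^t_\infty \mid_{(\hat T^t_\infty)^{-1}(t-s)}$ is a subordinator with Laplace exponent $\phi$ killed at the deterministic time $(\hat T^t_\infty)^{-1}(t-s)$. Applying $\cL$ preserves this distribution, since the jumps of a subordinator form a homogeneous Poisson point process whose law is invariant under time reversal. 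For the independence from $(\Pi_\infty(u), u \leq s)$, the key observation is that $\tilde\Pi_\infty(s,t)$ encodes the ``fresh'' ancestors of $t$ lying strictly above the height $\min_{[s,t]} \cH_\infty$ of the common ancestors, and these are determined by increments of the underlying subordinator $R_\infty$ on a $\cH_\infty$-measurable index range that is disjoint from the ones encoding $\Pi_\infty(u)$ for $u \leq s$. Conditionally on $\cH_\infty$, these increments are independent by the subordinator structure of $R_\infty$.

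The main obstacle I expect is the last point, namely turning the intuitive ``disjoint increments'' statement into a rigorous conditional independence. Proposition~\ref{prop:more-ind} provides independence of $R_\infty$ from a finite collection of $(\cZ^u_\infty, T^u_\infty)$, but what is needed here is independence at the level of appropriately restricted pieces of the family $(\hat R^u_\infty)_{u \in \Q}$. The cleanest route is probably to argue the disjointness at the discrete level, using that $\tilde \Pi(m,n)$ is measurable with respect to a shift of the sticks past the ladder index corresponding to the MRCA, hence independent of the sticks determining the spines at earlier times, and then to pass to the limit using \eqref{eq:big-conv} and the already established independence structure.
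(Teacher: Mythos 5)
Your proposal follows essentially the same route as the paper: the decomposition is obtained by passing the discrete coincidence of spines below the MRCA to the limit and identifying the shifted piece via Lemma~\ref{lemma:formula-Pi-R} and~\eqref{eq:formula-T-H}, while the conditional law of $\tilde\Pi_\infty(s,t)$ follows from the independence of $\hat R^t_\infty$ from $\cH_\infty$. For the conditional independence, your fallback route is exactly the paper's argument --- note only that the clean discrete split is at the stick index $[ps]$ (one checks $\tilde\Pi_p(s,t)\in m\cF_{\geq [ps]}$ while $\bigl((\hat R^u_p)_{u\leq s}, S_p\mid_s\bigr)\in m\cF_{<[ps]}$, then passes to the limit and conditions on $\sigma(S_\infty)=\sigma(\cH_\infty)$), and that your primary heuristic of ``disjoint increments of $R_\infty$'' cannot be made literal at the continuum, since the $\hat R^u_\infty$ are only defined as weak limits of their discrete counterparts rather than as functionals of a single subordinator.
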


\begin{proof}
	That conditionally on $\cH_\infty$, $\tilde \Pi_\infty(s,t)$ is a subordinator with Laplace exponent $\phi$ killed at $(\hat T^t_\infty)^{-1}(t-s)$ is immediate from its definition and the fact that $\hat R^t_\infty$ is a subordinator with Laplace exponent $\phi$ independent from $\cH_\infty$. It remains to prove $\Pi_\infty(t) = \left[\Pi_\infty(s) , \ \tilde \Pi_\infty(s,t) \right]_{\min_{[s,t]}\cH_\infty}$ and the conditional independence.
	\\
	
	\noindent \textit{Proof of the relation $\Pi_\infty(t) = \left[\Pi_\infty(s), \ \tilde \Pi_\infty(s,t) \right]_{\min_{[s,t]}\cH_\infty}$.} We have seen in the proof of Theorem~\ref{thm:tightness'} that $\Pi_p(t) \mid_{\min_{[s,t]} \cH_p-2c_p} = \Pi_p(s) \mid_{\min_{[s,t]} \cH_p-2c_p}$: letting $p \to \infty$ we thus obtain $\Pi_\infty(t) \mid_{\min_{[s,t]} \cH_\infty} = \Pi_\infty(s) \mid_{\min_{[s,t]} \cH_\infty}$ (using as before the convergence of the $\hat R^t_p$'s, their asymptotic independence with $\cH_\infty$ and Lemma~\ref{lemma:continuity-properties}) and so
	\[ \Pi_\infty(t) = \left[ \Pi_\infty(t), \Theta_{\min_{[s,t]} \cH_\infty} (\Pi_\infty(t)) \right]_{\min_{[s,t]} \cH_\infty} = \left[ \Pi_\infty(s), \Theta_{\min_{[s,t]} \cH_\infty} (\Pi_\infty(t)) \right]_{\min_{[s,t]} \cH_\infty}. \]
	Since $\Pi_\infty(t) = \cL(\hat R^t_\infty \mid_{(\hat T^t_\infty)^{-1}(t)})$ (Lemma~\ref{lemma:formula-Pi-R}) and $(\hat T^t_\infty)^{-1}(t-s) = \cH_\infty(t) - \min_{[s,t]} \cH_\infty$ (Relation~\eqref{eq:formula-T-H}) we obtain
	\[ \Theta_{\min_{[s,t]} \cH_\infty} \left( \Pi_\infty(t) \right) = \cL \big( \hat R^t_\infty \mid_{(\hat T^t_\infty)^{-1}(t-s)} \big). \]
	Combining the last two displays proves the desired relation.
	\\
	
	\noindent \textit{Proof of the conditional independence.} We now prove that $\tilde \Pi_\infty(s,t)$ is independent of $(\Pi_\infty(u), u \leq s)$ conditionally on $\cH_\infty$. For finite $p$, define 
	\[\tilde \Pi_p(s,t) = \ \cL(\hat R^t_p\mid_{(\hat T^t_p)^{-1}(t-s)-}).\]
	 We have $(\tilde \Pi_p(s,t), \Theta_s(S_p)) \in m \cF_{\geq [ps]}$ and $((\hat R^u_p)_{u \leq s}, S_p\mid_s) \in m \cF_{< [ps]}$ which implies by letting $p \to \infty$ that
	\[ \left( \tilde \Pi_\infty(s,t), \Theta_s(S_\infty) \right) \ci \left( (\hat R^u_\infty)_{u \leq s, u \in \Q}, S_\infty\mid_s \right). \]
	Since $\Pi_\infty(u) = \cL(\hat R^u_\infty\mid_{\cH_{\infty}(u)})$ for $u \in \Q$, this gives
	\[ \left( \tilde \Pi_\infty(s,t), \Theta_s(S_\infty) \right) \ci \left( \left(\Pi_\infty(u) \right)_{u \leq s}, S_\infty \mid_s \right) \]
	(note that we can remove the condition $u \in \Q$ by continuity of $\Pi_\infty$) which further gives the independence between $\tilde \Pi_\infty(s,t)$ and $(\Pi_\infty(u), u \leq s)$ conditionally on $S_\infty$. Since finally $\sigma(S_\infty) = \sigma(\cH_\infty)$ this gives the desired conditional independence result.
\end{proof}

We can now prove Theorem~\ref{thm:height'}.

\begin{proof} [Proof of Theorem~\ref{thm:height'}]
	We already know that its life-time process is $\cH_\infty$ and is the height process associated to $S_\infty$. Thus according to Theorem~\ref{teo:exist-unique-snake} we only have to show that conditionally on $\cH_\infty$, $\Pi_\infty$ is (time-inhomogeneous) Markovian and satisfies for every $s \leq t$ the equality in distribution
	\[ \Pi_\infty(t) = \left[ \Pi_\infty(s) , \Gamma \right]_{\min_{[s,t]} \cH_\infty} \]
	with $\Gamma$ an independent subordinator with Laplace exponent $\phi$ killed at $\cH_\infty(t) - \min_{[s,t]} \cH_\infty$. To do so, we only have to prove that
	\[ \E \left[ f(\Pi_\infty(t)) \mid \cH_\infty, \Pi_\infty(u), u \leq s \right] = \E \left[ f \left( \left[ \Pi_\infty(s), \Gamma \right]_{\min_{[s,t]} \cH_\infty} \right) \mid \cH_\infty, \Pi_\infty(s) \right]. \]
	For $t \in \Q$ this is a direct consequence of Lemma~\ref{lemma:snake-like-2}, and so the result follows by continuity of $\Pi_\infty$.
\end{proof}

\section{Convergence of the contour process} \label{sec:general-contour}\label{sec:contour}

In this section we state and prove the following triangular version of Theorem~\ref{thm:contour}.

\begin{theorembis} {\ref{thm:contour}'} [Triangular version of Theorem~\ref{thm:contour}] \label{thm:contour'}
	If Assumptions~\ref{ass-G'},~\ref{ass-C1'} and~\ref{ass-C2'} are satisfied, if~\eqref{eq:independence-condition'} holds and if
	\begin{equation} \label{eq:independence-condition-2'}
		\P \left( \lvert \cP^*_p \rvert \geq \varepsilon p g_p \mid v_p V^*_p \geq \delta \right) \mathop{\longrightarrow}_{p \to \infty} 0 \ \text{ for every } \ \varepsilon, \delta > 0, \tag{\ref{eq:independence-condition-2}'}
	\end{equation}
	then $\cV_\infty$ is independent of $\H_\infty$ and $c_p / v_p \to \infty$. In particular, $\C^\star_p \fdd \H_\infty \circ \cV^{-1}_\infty$.
\end{theorembis}

Under the assumptions of this theorem, we know by Theorem~\ref{thm:height'} that $\Pi_p \Rightarrow \Pi_\infty$ and since $\cV_p \Rightarrow \cV_\infty$, we can assume without loss of generality that $(\Pi_p, \cV_p) \Rightarrow (\Pi_\infty, \cV_\infty)$ with an unknown (at this point) correlation structure between $\Pi_\infty$ and $\cV_\infty$.

As we will see, the main idea is that the asymptotic behavior of $\cV_p$ and $\C_p$ is governed by long edges, i.e., edges with length $\geq \varepsilon / v_p$ for some $\varepsilon > 0$. In contrast, $\H$ and $\C^\star$ only ``see'', by construction, the birth times of individuals, which are of the order of $1/c_p$ and are thus somehow insensitive to long edges because, intuitively, these long edges are close to the leaves.

To formalize this idea, for each $\varepsilon > 0$ and $k \geq 0$ we consider $\lambda_k$ the $k$th individual with an edge longer than $\varepsilon / v_p$ and $\lambda^+_k$ its last child: $\lambda_0 = 0$ and for $k \geq 1$,
\[ \lambda_k = \min \left\{ n > \lambda_{k-1}: V_n \geq \varepsilon / v_p \right\} \ \text{ and } \ \lambda^+_k = \inf \big\{ n \geq \lambda_k: S(n) = S(\lambda_k)-1 \big\}. \]
In particular, $\{\lambda_k, \ldots, \lambda^+_k-1\}$ is the set of $\lambda_k$'s descendants (including $\lambda_k$) and
\[ \bigcup_{k \geq 0} \left \{ \lambda_k, \ldots, \lambda^+_k-1 \right \}. \]
is the set of individuals with an ancestor (including potentially the individual herself) with an edge $\geq \varepsilon / v_p$. Note that the sequences $(\lambda_k)$ and $(\lambda^+_k)$ depend on $p$ and $\varepsilon$ but this dependency is omitted from the notation for simplicity. We will also denote with $\tilde{\cdot}$ scaled versions of these random times, namely
\[ \tilde \lambda_k^+ = \frac{1}{p} \lambda_k^+ \ \text{ and } \ \tilde \lambda_k^+ = \frac{1}{p} \lambda_k^+ \]
We finally define $\lambda^\infty_k$ as the time of the $k$th jump $\geq \varepsilon$ of $\cV_\infty$. Note that since $\cV_p \Rightarrow \cV_\infty$, standard properties of the Skorohod topology imply that
\begin{equation} \label{eq:conv-time}
	\left( \tilde \lambda_k, k \geq 0 \right) \Rightarrow \left( \lambda^\infty_k, k \geq 0 \right),
\end{equation}
see for instance~\cite[Proposition VI.2.17]{Jacod03:0}.

We will thus consider sequences $(X_{p, \varepsilon})$ indexed by the scaling parameter $p$ and also by $\varepsilon$, and for such sequences we will use the notation $X_{p, \varepsilon} \Rightarrow_{p, \varepsilon} X$ to mean that $X_{p, \varepsilon}$ converges weakly to $X$ when we let first $p \to \infty$ and then $\varepsilon \downarrow 0$, or more formally that for every bounded continuous function $f$ we have
\[ \limsup_{p \to \infty} \left \lvert \E \left[ f(X_{p, \varepsilon}) \right] - \E \left[ f(X) \right] \right \rvert \mathop{\longrightarrow}_{\varepsilon \to 0} 0. \]
For instance, standard properties of the Skorohod topology imply that $\cV_p^{< \varepsilon}(t) \Rightarrow_{p, \varepsilon} \texttt{d}t$ for each fixed $t \geq 0$, where $\texttt{d}\geq0$ denotes the drift of the process $\cV^\infty$ and $\cV_p^{< \varepsilon}$ for $p \in \N \cup \{\infty\}$ is obtained from $\cV_p$ by only keeping jumps $< \varepsilon$, i.e., by removing all jumps $\geq \varepsilon$.

\subsection{Step 1: $c_p / v_p \to \infty$}

The proof of $c_p / v_p \to \infty$ relies on the next lemma which shows that the time spent visiting subtrees rooted at long edges is negligible.

\begin{lemma} \label{lemma:subtrees-long-edges}
	For every $k \geq 0$ we have $\tilde \lambda^+_k - \tilde \lambda_k \Rightarrow 0$.
\end{lemma}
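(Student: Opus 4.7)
The plan is to write $\lambda_k^+ - \lambda_k$ as a hitting time of a Lukasiewicz walk down to $-N$, where $N = \lvert \cP_{\lambda_k}\rvert$, and then use~\eqref{eq:independence-condition-2'} together with the scaling $S_p \Rightarrow S_\infty$ of Assumption~\ref{ass-G'} to show that this hitting time is $o(p)$ in probability.

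First, I would observe that $\lambda_k$ is a stopping time for the natural filtration generated by $(\omega_n)$, since the event $\{V_j \geq \varepsilon/v_p\}$ is observable at step $j$. By the strong Markov property, the sequence $(\omega_{\lambda_k + j}, j \geq 0)$ is an i.i.d.\ sequence with common law $(V^*_p, \cP^*_p)$, except that for $k \geq 1$ the first coordinate $\omega_{\lambda_k}$ has the conditional law of $(V^*_p, \cP^*_p)$ given $V^*_p \geq \varepsilon / v_p$. Setting $N = \lvert \cP_{\lambda_k} \rvert$ and tracking the Lukasiewicz walk started afresh at $\lambda_k$ (its first step is $N-1$, after which it has i.i.d.\ unconditional increments), I would deduce
\[ \lambda_k^+ - \lambda_k \ \stackrel{d}{=} \ 1 + \tau_{-N}, \]
where, conditionally on $N$, $\tau_{-N} = \inf\{n \geq 0 : \bar{S}(n) = -N\}$ for an independent random walk $\bar{S}$ with increment law $\lvert \cP^*_p \rvert - 1$ started at $0$.

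Next, \eqref{eq:independence-condition-2'} applied with $\delta = \varepsilon$ yields $\P_p(N \geq \eta p g_p) \to 0$ for every $\eta > 0$ (for $k=0$ this is automatic since $\lambda_0 = 0$ and the first stick is unconditional). Combining this with the monotonicity of $n \mapsto \tau_{-n}$ and the independence of $N$ from $\bar{S}$ gives, for any $\delta, \eta > 0$,
\[ \P_p\bigl(\tau_{-N}/p > \delta\bigr) \ \leq \ \P_p\bigl(N > \eta p g_p\bigr) + \P_p\bigl(\tau_{-[\eta p g_p]}/p > \delta\bigr). \]
From the scaling convergence $S_p \Rightarrow S_\infty$ and standard properties of first passage times (see below), one obtains $\tau_{-[\eta p g_p]}/p \Rightarrow \tau_\infty(-\eta)$, where $\tau_\infty(-\eta) = \inf\{t : S_\infty(t) \leq -\eta\}$. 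Because $S_\infty$ is spectrally positive, of infinite variation, and does not drift to $+\infty$, the point $0$ is regular for $(-\infty, 0)$, and therefore $\tau_\infty(-\eta) \downarrow 0$ almost surely as $\eta \downarrow 0$. Letting first $p \to \infty$ and then $\eta \downarrow 0$ in the above estimate yields $\tau_{-N}/p \Rightarrow 0$, which combined with $1/p \to 0$ gives $\tilde\lambda_k^+ - \tilde\lambda_k \Rightarrow 0$.

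The main technical point to handle with care is the passage $\tau_{-[\eta p g_p]}/p \Rightarrow \tau_\infty(-\eta)$: the first passage functional is not Skorohod-continuous on all paths, so I would argue that $S_\infty$ almost surely crosses the level $-\eta$ continuously (it has no negative jumps and is regular downwards) and at a unique time (by quasi-left continuity and the absence of intervals of constancy for a non-trivial spectrally positive Lévy process), which makes the level $-\eta$ an almost sure continuity point of the functional and allows an appeal to the continuous mapping theorem.
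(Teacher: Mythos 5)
Your proposal is correct and follows essentially the same route as the paper's proof: identify $\lambda_k^+-\lambda_k$ as (one plus) the first passage time of an independent copy of the Lukasiewicz walk to level $-N$ with $N=\lvert\cP_{\lambda_k}\rvert$ independent of that walk, use~\eqref{eq:independence-condition-2'} to get $N/(pg_p)\Rightarrow 0$, and conclude from $S_p\Rightarrow S_\infty$ that the passage time to a level $o(pg_p)$ is $o(p)$. Your write-up merely fills in the details the paper leaves implicit (the monotonicity bound in $N$, the continuous-mapping argument for the first-passage functional, and the downward regularity of $S_\infty$ giving $\tau_\infty(-\eta)\downarrow 0$), all of which are sound.
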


\begin{proof}
	We note that $\lambda^+_k - \lambda_k$ is equal in distribution to the hitting time of $-M$ by $S$, where $M \ci S$ is distributed as $\lvert \cP_{\lambda_1} \rvert$. Since $S_p \Rightarrow S_\infty$, the time needed for $S$ to hit $-\varepsilon p g_p$ is of the order of $p$. Since $\lvert \cP_{\lambda_1} \rvert / (p g_p) \Rightarrow 0$ by assumption, i.e., $M$ is negligible compared to $p g_p$, the hitting time of $M$ is also negligible compared to $p$, hence the result.
\end{proof}

We now prove that $c_p / v_p \to \infty$: by working along appropriate subsequences assume without loss of generality that $v_p / c_p \to \ell = \limsup_p(v_p / c_p)$. Since $R(k) \leq V_{T(k)-1}$ we have
\[ \H([pt]) = \left(\sum_{k: T(k)\leq [pt]} R(k) \right)\circ \vartheta^{[pt]} \leq \left(\sum_{k: T(k)\leq [pt]} V_{T(k)-1} \right)\circ \vartheta^{[pt]}. \]
Let
\[ E_p = \left\{ [pt] \notin \bigcup_{k \geq 1} \left\{ \lambda_k, \ldots, \lambda^+_k-1 \right\} \right\} \]
be the event that no ancestor of $[pt]$ has long edges. In $E_p$ we have
\[ \H([pt]) \leq \sum_{j\leq [pt]} V_j \Indicator{V_j<\eps / v_p} \]
and so upon scaling, we get $v_p \H_p(t) / c_p \leq \cV^{< \varepsilon}_p(t)$ in $E_p$. Since $\H_p(t) \Rightarrow R_\infty(T^{-1}_\infty(t))$, $\cV^{< \varepsilon}_p(t) \Rightarrow_{p, \varepsilon} \texttt{d} t$ and $\P(E_p) \to 1$ as a consequence of Lemma~\ref{lemma:subtrees-long-edges}, we obtain that $\P(\ell R_\infty(T^{-1}_\infty(t)) \leq \texttt{d}t) = 1$ which can only hold if $\ell = 0$.

\subsection{Step 2: snake-like property}

Let in the sequel
\[ X(m) = \sup_{\{0,\ldots,m\}} S - S(m). \]
In the following statement, we consider a functional $W$ whose $k$th increment is a functional of the sequence of sticks between the $k$th and $(k+1)$th record time of the path $S$. By standard fluctuation theory, $W$ can be seen as a renewal process
whose time is measured in local time units for the reflected process $X$.

The basic idea behind the next relation consists in decomposing the process $W$ before and after the local time corresponding to the excursion of $X$ straddling $t$. We refer the reader to Figure \ref{fig:discrete-snake}.

\begin{lemma}[Snake-like property]\label{prop:discrete-snake}
	Let $Z \in m \cF_{< T(1)}$ and 
	\begin{equation} \label{eq:W}
		W = \sum_{k \in \N} Z \circ \theta_{T(k)} \ \epsilon_{k+1}.
	\end{equation}
	Then $W \mid_{\tilde T^{-1}(m)} \in \cF_{\leq m}$ and if $m$ is not a weak record time for the walk $S$, then
	\begin{equation}\label{eq:discrete-snake}
		\Theta_{T^{-1}( m)}(W) \ = \Theta_{(\cZ^m)^{-1}\circ X(m)} (W^m) \ \text{ and } \ W = \left[ W , \Theta_{(\cZ^m)^{-1}\circ X(m)} (W^m) \right]_{\tilde T^{-1}(m)+1}.
	\end{equation}
\end{lemma}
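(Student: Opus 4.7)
The plan is to split the lemma into two observations of unequal difficulty. First, the measurability claim $W\mid_{\tilde T^{-1}(m)} \in \cF_{\leq m}$ is essentially definitional. Second, I would reduce the second identity of~\eqref{eq:discrete-snake} to the first one via the general relation $\nu = [\nu, \Theta_h(\nu)]_h$ recalled earlier in the paper, applied with $\nu = W$ and $h = \tilde T^{-1}(m) + 1 = T^{-1}(m)$ (the last equality using that $m$ is not a weak record). All the work is then concentrated in the first identity.

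For the measurability claim, set $k_0 = \tilde T^{-1}(m)$. The restriction $W\mid_{k_0}$ only involves the weights $Z \circ \theta_{T(k)}$ for $0 \leq k < k_0$. Since $Z \in m\cF_{<T(1)}$, each such weight lies in $\sigma(\omega_{T(k)}, \dots, \omega_{T(k+1)-1})$; and since $T(k+1) \leq T(k_0) \leq m$ for $k < k_0$, while $k_0$ itself is a functional of $S\mid_m$, everything is measurable in $\cF_{\leq m}$.

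The core of the proof is the following ladder-time matching. Set $n = T^{-1}(m)$, $M = \sup_{[0,m]} S$, $X(m) = M - S(m)$, and $j^\star = (\cZ^m)^{-1}(X(m))$. Since $m$ is not a weak record, one has $T(n-1) < m < T(n)$, $M = S(T(n-1)) > S(m)$ and $X(m) > 0$. The defining property of ladder times of $S$ gives $S < M$ strictly on $(T(n-1), T(n))$, hence $S^m < X(m)$ on $(0, T(n)-m)$, while $S^m(T(n)-m) = S(T(n)) - S(m) \geq X(m)$. Therefore $T(n) - m$ is the first ladder time of $S^m$ attaining the level $X(m)$, which identifies $T^m(j^\star) = T(n) - m$. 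For $k \geq 1$, once both walks sit at a fresh record at the matched times $T(n)$ and $T^m(j^\star)$, their subsequent ladder times are in bijection (they record the same successive overshoots of the same walk, differing only by the additive constant $S(m)$), yielding $T^m(j^\star + k) = T(n+k) - m$. The first identity then follows by comparing weights atom by atom: the $\ell$-th atom of $\Theta_{T^{-1}(m)}(W)$ carries $Z \circ \theta_{T(n + \ell - 1)}$ and that of $\Theta_{j^\star}(W^m)$ carries $Z \circ \theta_{m + T^m(j^\star + \ell - 1)}$, which coincide by the matching.

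I expect the main obstacle to be the base case $k = 0$ of the ladder-time matching, namely the identification $T^m(j^\star) = T(n)-m$, together with the correct handling of $(\cZ^m)^{-1}$ at a plateau of $\cZ^m$ at height $X(m)$ (i.e., the case $S(T(n)) = M$ with no strict overshoot); the convention must be such that the ladder time of $S^m$ at which $\cZ^m$ \emph{attains} $X(m)$ is counted. The subsequent cases $k \geq 1$ are a conceptually straightforward translation of the strong Markov property at time $T(n)$ (equivalently $T^m(j^\star)$), which becomes a renewal-type argument shifting ladder indices by the constant $n - j^\star$.
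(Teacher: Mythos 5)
Your proposal is correct and follows essentially the same route as the paper: the measurability claim and the reduction of the second identity to the first via $W=[W,\Theta_h(W)]_h$ with $h=T^{-1}(m)=\tilde T^{-1}(m)+1$ are identical, and your ladder-time matching $T^m(j^\star+k)=T(n+k)-m$ is precisely a rigorous rendering of the paper's informal ``number of excursions needed to escape the valley straddling $m$'' argument, which the paper justifies only by reference to Figure~\ref{fig:discrete-snake}. The overshoot/plateau subtlety you flag (the convention for $(\cZ^m)^{-1}$ when $S(T(n))=M$ exactly) is genuine and is not addressed in the paper's proof either; it is harmless in the paper's applications since such events are ultimately negligible in the scaling limit.
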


\begin{proof}
	The fact that $W \mid_{\tilde T^{-1}(m)} \in \cF_{\leq m}$ is a direct consequence of the assumption $Z \in m \cF_{< T(1)}$ and the fact that $T(\bTm(m)) \leq m$ and is a stopping time.
	
	Consider now $m$ which is not a weak ascending ladder height time and let us prove the two relations of~\eqref{eq:discrete-snake}. Since $W = [W, \Theta_h(W)]_h$ for every $h \geq 0$, the second relation follows directly from the first one with $h = T^{-1}(m)$ since, when $m$ is not a weak ascending ladder height time, we have $T^{-1}(m) = \tilde T^{-1}(m) + 1$.
	
	As for the first one, $\Theta_{T^{-1}(m)}(W)$ is obtained by skipping the $T^{-1}(m)$ first excursions of $S$ reflected at its maximum. In terms of the shifted process $W^m$ at time $m$, this amounts to skipping the excursions needed to escape the ``valley'' in which $m$ sits, see Figure~\ref{fig:discrete-snake} for an explanation of this loose statement on a picture. The shifted process needs to reach level $X(m)$ to escape the valley, and the number of excursions needed to do so is precisely given by $(\cZ^m)^{-1} \circ X(m)$ by definition of $\cZ^m$. This proves the result.
\end{proof}

\subsection{Step 3: $\Pi_\infty \ci \cV_\infty$ by a perturbation argument} In this step we prove that $\Pi_\infty \ci \cV_\infty$ through a perturbation argument. We first introduce the perturbed sequence, explain the main idea and prove that $S_\infty \ci \cV_\infty$ in Section~\ref{subsub:perturbation}. The independence between $\Pi_\infty$ and $\cV_\infty$ is then established in Section~\ref{subsub:full-perturbation}.

\subsubsection{Perturbed sequence, main idea and $S_\infty \ci \cV_\infty$} \label{subsub:perturbation}

For each $p \geq 1$, let $(V'_n, \cP'_n)$ be a sequence of i.i.d.\ random variables, independent of $(V_n, \cP_n)$ and with the same law as that of $(V^*_p, \cP^*_p)$ conditioned on $V^*_p < \varepsilon / v_p$. Out of the sequence $(V'_n, \cP'_n)$ and the original sequence $(V_n,\cP_n)$, we define a new sequence of sticks
\[ \left( \widehat{V}_n, \widehat{\cP}_n \right) = \begin{cases}
	\left( V_n, \cP_n \right) & \text{ if } V_n < \varepsilon / v_p,\\
	\left( V'_n, \cP'_n \right) & \text{ else}.
\end{cases} \]
Then $(\widehat{V}_n, \widehat{\cP}_n)$ forms a sequence of i.i.d.\ random variables with common distribution the law of $(V'_1, \cP'_1)$ and we will denote with a hat \ $\widehat \cdot$ \ all the processes defined from this sequence of sticks, e.g., $\widehat \Pi$, $\widehat \Pi_p$, etc. Moreover, by construction we have
\[ \big( (\lambda_k, V_{\lambda_k}, \cP_{\lambda_k}), k \geq 1 \big) \ci \big( (\widehat V_n, \widehat \cP_n), n \geq 1 \big) \]
which implies for instance that
\[ \widehat \Pi_p \ci \cV^{\geq \varepsilon}_p \]
with $\cV^{\geq \varepsilon}_p = \cV_p - \cV^{< \varepsilon}_p$ obtained from $\cV_p$ by only keeping jumps $\geq \varepsilon$. If we can show that the perturbation induced by the $(V'_n, \cP'_n)$'s is negligible in the sense that $\Pi_p(t) - \widehat \Pi_p(t) \Rightarrow 0$ for each fixed $t$, then we will get $\Pi_\infty \ci \cV_\infty$ (because $\cV^{\geq \varepsilon}_p \Rightarrow_{p, \varepsilon} (\cV_\infty(t) - \texttt{d}t, t \geq 0)$). This is the object of the next section, and before going on we use these arguments to show that $S_\infty \ci \cV_\infty$.

The fact that $S_p - \widehat S_p \Rightarrow 0$ is a direct consequence of the assumption~\eqref{eq:independence-condition-2'} and the fact that $((V_n, \cP_n), n \geq 1)$ and $((\widehat V_n, \widehat \cP_n), n \geq 1)$ only differ by a locally finite number of terms (by Lemma \ref{lemma:subtrees-long-edges}). More precisely, we have
\begin{equation} \label{eq:bound-perturbation-S}
	\sup_{[0,t]} \left \lvert S_p - \widehat S_p \right \rvert \leq \frac{1}{p g_p} \sum_{k: \lambda_k \leq [pt]} \left( \left \lvert \cP_{\lambda_k} \right \rvert + \left \lvert \cP'_{\lambda_k} \right \rvert \right).
\end{equation}
By assumption we have $\lvert \cP_{\lambda_k} \rvert / (p g_p) \Rightarrow 0$. Since
\[ \P \left( \left \lvert \cP'_{\lambda_k} \right \rvert \geq x p g_p \right) = \P \left( \left \lvert \cP^*_p \right \rvert \geq x p g_p \mid V^*_p \leq \varepsilon v_p \right) \leq \frac{\P \left( \left \lvert \cP^*_p \right \rvert \geq x p g_p \right)}{\P \left( V^*_p \leq \varepsilon v_p \right)} \]
we also have $\lvert \cP'_{\lambda_k} \rvert / (p g_p) \Rightarrow 0$. Since the number of terms in the sum in the right-hand side of~\eqref{eq:bound-perturbation-S} forms a tight sequence, we get as desired that $S_p - \widehat S_p \Rightarrow 0$. We now show that the perturbation does not significantly change the chronological height process as well.

\subsubsection{Perturbation of the chronological spine process} \label{subsub:full-perturbation}

Let $((\widehat V^{(k)}_n, \widehat \cP^{(k)}_n), n \geq 1)$ be obtained by induction as $(\widehat V^{(0)}_n, \widehat \cP^{(0)}_n) = (V_n, \cP_n)$ and
\[ \left( \widehat{V}^{(k+1)}_n, \widehat{\cP}^{(k+1)}_n \right) = \begin{cases}
	\left( V'_n, \cP'_n \right) & \text{ if } n = \lambda_{k+1},\\
	\left( \widehat{V}^{(k)}_n, \widehat{\cP}^{(k)}_n \right) & \text{ else}.
\end{cases} \]
Thus, $((\widehat V^{(k)}_n, \widehat \cP^{(k)}_n), n \geq 1)$ and $((\widehat V^{(k+1)}_n, \widehat \cP^{(k+1)}_n), n \geq 1)$ only differ by one element and in order to prove that $\Pi_p(t) - \widehat \Pi_p(t) \Rightarrow 0$ it is enough to prove that $\widehat \Pi^{(k)}_p(t) - \widehat \Pi^{(k+1)}_p(t) \Rightarrow 0$ for every $k \geq 0$. Because 
 $\Pi_p(t) = (R_p \mid_{{\tilde T}_p^{-1}(t)}) \circ \vartheta^{[pt]}$, $(R_p, T_p) \Rightarrow (R_\infty, T_\infty)$ with $R_\infty$ a subordinator independent from $T_\infty$, it is enough to prove that
\[ \widehat R^{(k)}_p - \widehat R^{(k+1)}_p \Rightarrow 0 \ \text{ and } \ \widehat T^{(k)}_p - \widehat T^{(k+1)}_p \Rightarrow 0, \ k \geq 0. \]
By shifting and using the strong Markov property, it is enough to prove the result for $k = 0$. We show that $R_p - \widehat R^{(1)}_p \Rightarrow 0$, the convergence $T_p - \widehat T^{(1)}_p \Rightarrow 0$ can be shown along the same lines. To ease the notation let us define $\tilde \lambda = \frac{1}{p} \lambda_1$ and let us also denote with \ $\underline{\cdot}$ \ instead of with \ $\widehat{\cdot}^{(1)}$ \ all quantities defined from the $(\widehat V^{(1)}_n, \widehat \cP^{(1)}_n)$'s, e.g., $\underline{R}_p$ instead of $\widehat R^{(1)}_p$, etc.

For the sake of simplicity, let us now assume that the drift of the subordinator $\cZ_\infty$ is equal to $0$. We will briefly discuss in Remark~\ref{lem:patch} how to adapt our argument to the case of positive drift. Since $S_\infty \ci \cV_\infty$, $\lambda_1$ is with high probability not a record time of $S$ and so Lemma~\ref{prop:discrete-snake} entails (in an event of probability going to one)
\begin{equation}\label{eq:1}
	R = R \mid_{\bTm(\lambda_1)} + \Delta R \left( T^{-1}(\lambda_1) \right) \epsilon_{T^{-1}(\lambda_1)} + \Theta_{(\cZ^{\lambda_1})^{-1}(X(\lambda_1))} \left( R^{\lambda_1} \right) 
\end{equation}
and
\begin{equation}\label{eq:2}
	\underline R = \underline R \mid_{\underline{\widetilde T}^{-1}(\lambda_1)} + \Delta \underline R \left( \underline{T}^{-1}(\lambda_1) \right) \epsilon_{T^{-1}(\lambda_1)} + \Theta_{(\underline \cZ^{\lambda_1})^{-1}(\underline X(\lambda_1))} \left( \underline R^{\lambda_1} \right).
\end{equation}
In order to grasp more intuition on what follows, let us briefly give an interpretation of the previous relations. 
Recall that for the processes $R, T, \cZ$, time is measured in local time units for the reflected process $X$. The previous relation consists in decomposing the process $R$ (and $\bar R$) before and after the local time corresponding to the excursion of $X$ straddling $\lambda_1$. In particular, ${(Z^{\lambda_1})^{-1}(X(\lambda_1))}$ is the local time needed to exit the ``valley'' straddling $\lambda_1$.
See Figure \ref{fig:discrete-snake}.
 
Since the two initial sequences of sticks coincide up to $\lambda_1$, we have
\[ R \mid_{\widetilde T^{-1}(\lambda_1)} = \underline R \mid_{\underline{\widetilde T}^{-1}(\lambda_1)}. \]
Next, let $E$ be the event
\begin{multline*}
	\Big\{ \underbrace{\mbox{depth of the valley straddling $\lambda_1$} \ + \ \mbox{overshoot upon exiting the valley}}_{\mbox{for $S$}}\\
	\geq \ \underbrace{\mbox{depth of the valley}}_{\mbox{for $\bar S$}} \Big\}
\end{multline*}
which can be formerly defined as 
\[
E = \left\{ \cZ^{\lambda_1} \circ \left( \cZ^{\lambda_1} \right)^{-1} \circ X(\lambda_1) \geq \underline X(\lambda_1) \right\} \
\]
and analogously, define 
\[\ \underline E = \left\{ \underline \cZ^{\lambda_1} \circ \left( \underline \cZ^{\lambda_1} \right)^{-1} \circ \underline X(\lambda_1) \geq X(\lambda_1) \right\}.\]
In the event $E \cap \underline E$, the two Lukaziewicz paths exit the valley straddling $\lambda_1$ at the same time, i.e.,
\[ T \circ T^{-1}(\lambda_1) = \underline T \circ \underline T^{-1}(\lambda_1), \]
and this implies that, in this event,
\[ \Theta_{(Z^{\lambda_1})^{-1}(X(\lambda_1))} \left( R^{\lambda_1} \right) = \Theta_{(\underline Z^{\lambda_1})^{-1}(\underline X(\lambda_1))} \left( \underline R^{\lambda_1} \right). \]
We now claim that $\P(E), \P({\underline E})\to1$. Let us first consider the overshoot when exiting the valley straddling $\lambda_1$, which is given by
\[ \cZ^{\lambda_1} \circ \left( \cZ^{\lambda_1} \right)^{-1} \circ X(\lambda_1) - X(\lambda_1). \]
As $X(\lambda_1) \in m\cF_{<\lambda_1}$ and $\cZ^{\lambda_1}\in m\cF_{\geq\lambda_1}$, the latter quantity converges in distribution (after proper space rescaling by $1/p g_p$)
to $\cZ_\infty\circ \cZ_\infty^{-1}(\xi)-\xi$, where $\xi \ci \cZ_\infty$ is almost surely positive. In the absence of drift for the subordinator
$\cZ_\infty$, we must have
\[ \cZ_\infty\circ \cZ_\infty^{-1}(\xi)-\xi >0 \]
because $S_\infty$ does not creep at level $\xi$ (see for instance~\cite[Thereom VI.19]{Bertoin96:0}) and as a consequence, $\cZ^{\lambda_1} \circ \left( \cZ^{\lambda_1} \right)^{-1} \circ X(\lambda_1) - X(\lambda_1)$ 
is of the order of $1/(pg_p)$. 
On the other hand, $\underline X(\lambda_1) - X(\lambda_1)$ is negligible compared to $1/(pg_p)$ because of~\eqref{eq:independence-condition-2'} which implies that $\P(E) \to 1$. By similar arguments one can prove that $\P(\underline E) \to 1$. This shows that $\P(E), \P({\underline E})\to1$, and as discussed earlier, the last terms on the RHS of~\eqref{eq:1} and~\eqref{eq:2} are equal.

In order to conclude the proof, we only have to show that for the middle terms of~\eqref{eq:1} and~\eqref{eq:2} we have
\[ \P \left( c_p \Delta R(T^{-1}(\lambda_1)) \geq x \right) \mathop{\longrightarrow}_{p \to \infty} 0, \ \P \left( c_p \Delta \underline R(\underline T^{-1}(\lambda_1)) \geq x\right) \mathop{\longrightarrow}_{p \to \infty} 0 \]
for any $x > 0$. We have 
\[ \Delta R(T^{-1}(\lambda_1)) = \Delta R^{\lambda_1} \left( (\cZ^{\lambda_1})^{-1} (X(\lambda_1)) \right). \]
As $X(\lambda_1) \in m\cF_{<\lambda_1}$ and $(R^{\lambda_1}, \cZ^{\lambda_1}) \in m\cF_{\geq\lambda_1}$ we see that $c_p \Delta R(T^{-1}(\lambda_1))$ is equal in distribution to $\Delta R_p (\cZ^{-1}_p(\zeta_p))$ with $(R_p, \cZ_p) \ci \zeta_p$ and $\zeta_p$ equal in distribution to $X_p(\tilde \lambda)$. Thus $\zeta_p$ converges in distribution to some $\zeta_\infty$ independent from $\cZ_\infty$, so that $\cZ^{-1}_p(\zeta_p) \Rightarrow \cZ^{-1}_\infty(\zeta_\infty)$ and because $R_\infty \ci \cZ_\infty$ we obtain $\Delta R_p(\cZ^{-1}_p(\zeta_p)) \Rightarrow 0$. In the event $E \cap \underline E$ we have
\[ \Delta \underline R(\underline T^{-1}(\lambda_1)) = \Delta R^{\lambda_1} \left( (Z^{\lambda_1})^{-1} (\underline X(\lambda_1)) \right) \]
and so the same argument as above gives $c_p \Delta \underline R(\underline T^{-1}(\lambda_1)) \Rightarrow 0$. This shows that $\cV_\infty \ci \Pi_\infty$
which completes the proof of Theorem~\ref{thm:contour'}.

\begin{remark}\label{lem:patch}
	As mentioned in the proof, we only proved the previous result assuming that $\cZ_\infty$ has no drift. In this case, we argued that the two random walks exit the $\lambda_1$-valley at the same time with high probability. In the presence 
	of drift, this is not the case anymore. The two random walks $S$ or $\bar S$ can exit the valley by ``creeping''. In this case, the exit times are not equal, but 
	their differences vanish (macroscopically) at the limit. A similar but more cumbersome argument can then be applied, but the spirit of the proof remains the same.
\end{remark}

\section{Applications} \label{sec:applications}

We now come back to the two specific examples of Section~\ref{sub:examples}.

\subsection{Proof of Theorem~\ref{thm:finite-variance}}

In the non-triangular case and when $\lvert \cP^* \rvert$ has finite variance, Assumptions~\ref{ass-G},~\ref{ass-C1} and~\ref{ass-C2} imply Assumptions~\ref{ass-G'},~\ref{ass-C1'} and~\ref{ass-C2'} with $\psi(\lambda) = \lambda^2$, and so in order to prove Theorem~\ref{thm:finite-variance} we only have to check that conditions~\eqref{eq:independence-condition} and~\eqref{eq:independence-condition-2} hold. We first check~\eqref{eq:independence-condition-2}. Fix $\varepsilon > 0$ and let $\lambda = \inf\{n\geq0: V_n \geq \eps / v_p \}$. Then for every $K>0$
\[ \P\left( \lvert \cP^* \rvert \geq x \sqrt{p} \mid v_p V^* \geq \eps \right) = \P \left( \lvert \cP_{\lambda} \rvert \geq x \sqrt{p} \right) \leq \P \left( \lambda \geq [Kp] \right) + \P \left( \max_{i\in\{0,\ldots,[Kp]\}} \lvert \cP_i \rvert \geq x \sqrt{p} \right). \]
Since $\lvert \cP^* \rvert$ has finite variance, the second term vanishes when we let $p \to \infty$, while since $\cV_p \Rightarrow \cV_\infty$ we have that $\lambda/p$ is tight, and so the first term also vanishes when we let first $p \to \infty$ and then $K \to \infty$.

We now turn to~\eqref{eq:independence-condition}. We have
\[ \P \left( c_p R(1) \geq \varepsilon \mid T(1) \geq \delta p \right) = \frac{\P \left( c_p R(1) \geq \varepsilon , T(1) \geq \delta p \right)}{\P \left( T(1) \geq \delta p \right)}. \]

In the finite variance case, the tail of the ladder height time of a recurrent, zero-mean random walk with finite variance decays asymptotically like $x^{-1/2}$ up to a multiplicative constant. Applying this result to $S$ and $-S$, we obtain the existence of finite and positive constants $\kappa_1, \kappa_2$ such that
\[ \P(T(1) \geq x) \sim \kappa_1 x^{-1/2} \ \mbox{and } \ \P(\tau^-_1 \geq x) \sim \kappa_2 x^{-1/2} \ \text{ as } \ x \to \infty, \]
and in particular, in order to prove~\eqref{eq:independence-condition} we only have to prove that
\[ \sqrt p \ \P \left( c_p R(1) \geq \varepsilon , T(1) \geq \delta p \right) \to 0. \]
Since $x \mapsto 1 - e^{-x}$ is decreasing, Markov inequality gives
\[ \P \left( c_p R(1) \geq \varepsilon , T(1) \geq \delta p \right) \leq \frac{1}{(1 - e^{-\varepsilon})(1-e^{-\delta})} \E \left( \left( 1 - e^{-c_p R(1)} \right) \left( 1 - e^{-T(1)/p} \right) \right) \]
and so in order to prove the result, it is enough to show that
\[ \sqrt{p} \E \left( \left( 1 - e^{-c_p R(1)} \right) \left( 1 - e^{-T(1)/p} \right) \right) \mathop{\longrightarrow}_{p \to \infty} 0. \]
Considering $F(t,z,r) = (1 - e^{- t / p}) (1 - e^{-c_p r})$ in~\eqref{eq:F} and using the notation $\xi = \lvert \cP^* \rvert$, we obtain
\begin{align*}
	\E \left( \left(1-e^{- T(1) / p} \right) \left( 1 - e^{- c_p R(1)} \right) \right) & = \sum_{t,x,z} \E \left( \left(1 - e^{- t / p} \right) \left( 1 - e^{- c_p A_z(\cP^*)} \right) ; \xi = x+z \right) \P(\tau^-_{x-1} = t-1)\\
	& = \sum_{x,z} \E \left( 1 - e^{- c_p A_z(\cP^*)} ; \xi = x+z \right) \E\left( 1 - e^{-(\tau^-_{x-1}+1) / p} \right)\\
	& = \sum_{x \geq 1} \E \left( 1 - e^{- c_p A_{\xi-x}(\cP^*)} ; \xi \geq x \right) \left( 1 - e^{-1/p} u^x_p \right)
\end{align*}
where the initial sum is taken over $t,x \geq 1$ and $z \geq 0$ and $u_p = \E(e^{-\tau^-_1 / p})$. We thus have
\[ \sqrt{p} \E \left( \left(1 - e^{- T(1) / p} \right) \left( 1 - e^{-c_p R(1)} \right) \right) = \sqrt{p}\sum_{x \geq 1} \E \left( 1 - e^{-c_p A_{\xi - x}(\cP^*)} ; \xi \geq x \right) \left( 1 - u^x_p \right) + o(1) \]
with $o(1)$ an error term that vanishes as $p \to \infty$. We split the sum into two terms, depending on whether $x$ is large or not. Fix until further notice some $\eta > 0$, and consider the terms $x \leq \eta \sqrt{p}$: we have
\begin{align*}
	\sqrt{p}\sum_{x \leq \eta \sqrt{p}} \E \left( 1 - e^{-c_p A_{\xi-x}(\cP^*)} ; \xi \geq x+1 \right) & \left( 1 - u^x_p \right)\\
	& \hspace{-10mm} \leq \sqrt{p}\left( 1 - u^{\eta \sqrt{p}}_p \right) \sum_{x \leq \eta \sqrt{p}} \E \left( 1 - e^{-c_p A_{\xi-x}(\cP^*)} ; \xi \geq x+1 \right)\\
	& \hspace{-10mm} \leq \sqrt{p}\left( 1 - u^{\eta \sqrt{p}}_p \right) \sum_x \E \left( 1 - e^{-c_p A_{\xi-x}(\cP^*)} ; \xi \geq x+1 \right)\\
	& \hspace{-10mm} = \sqrt{p}\E \left( 1 - e^{-c_p R(1)} \right) \times \left( 1 - u^{\eta \sqrt{p}}_p \right) = 1 - u^{\eta \sqrt{p}}_p.
\end{align*}
From the tail behavior $\P(\tau^-_1 \geq x) \sim \kappa_2 x^{-1/2}$, standard Tauberian theorems imply the existence of a finite constant $\kappa$ such that $\E(1 - e^{-\lambda \tau^-_1}) \sim \kappa \lambda^{1/2}$ as $\lambda \to 0$ (see for instance \cite[Corollary~$8.1.7$]{Bingham89:0}), which implies in particular that $1 - u^{\eta \sqrt p}_p \to 1 - e^{-\eta \kappa}$. Thus we obtain
\[ \limsup_{p \to \infty} \sqrt{p}\sum_{x \leq \eta \sqrt{p}} \E \left( 1 - e^{-c_p A_{\xi-x}(\cP^*)} ; \xi \geq x+1 \right) \left( 1 - u^x_p \right) \leq 1 - e^{-\eta \kappa}. \]
Let us now look at the other terms corresponding to $x \geq \eta \sqrt{p}$: we have
\begin{align*}
	\sqrt{p}\sum_{x \geq \eta \sqrt{p}} \E \left( 1 - e^{-c_p A_{\xi-x}(\cP^*)} ; \xi \geq x+1 \right) & \left( 1 - u^x_p \right)\\
	& \hspace{-25mm} \leq \sqrt{p}\sum_{x \geq \eta \sqrt{p}} \E \left( 1 - e^{-c_p A_{\xi-x}(\cP^*)} ; \xi \geq x+1 \right)\\
	& \hspace{-25mm} = \sqrt{p}\sum_{x \geq \eta \sqrt{p}} \E \left( 1 - e^{-c_p A_{\xi-x}(\cP^*)} ; \xi \geq x+1, \xi \geq \eta \sqrt{p}\right)\\
	& \hspace{-25mm} \leq \sqrt{p}\sum_x \E \left( 1 - e^{-c_p A_{\xi-x}(\cP^*)} ; \xi \geq x+1, \xi \geq \eta \sqrt{p}\right).
\end{align*}
From~\eqref{eq:F-0} we obtain
\[ \E \left( 1 - e^{-c_p R(1)}; \lvert \cP_{T(1) - 1} \rvert \geq \eta \sqrt{p}\right) = \sum_x \E \left( 1 - e^{-c_p A_{\xi-x}(\cP^*)} ; \xi \geq x+1, \xi \geq \eta \sqrt{p}\right) \]
and so using Cauchy--Schwarz inequality we get
\begin{align*}
	\sqrt{p}\sum_{x \geq \eta \sqrt{p}} \E \left( 1 - e^{-c_p A_{\xi-x}(\cP^*)} ; \xi \geq x+1 \right) & \left( 1 - u^x_p \right)\\
	& \hspace{-25mm} \leq \sqrt{p} \sqrt{ \E \left[ \left( 1 - e^{-c_p R(1)} \right)^2 \right] \P \left( \lvert \cP_{T(1) - 1} \rvert \geq \eta \sqrt{p}\right) }\\
	& \hspace{-25mm} \leq \eta^{-1/2} \sqrt{ \sqrt{p}\E \left( 1 - e^{-2c_p R(1)} \right) \E \left( \left \lvert \cP_{T(1)-1} \right \rvert ; \left \lvert \cP_{T(1)-1} \right \rvert \geq \eta \sqrt{p}\right) }.
\end{align*}
Since $\lvert \cP^* \rvert$ is assumed to have finite variance, $\lvert \cP_{T(1)-1} \rvert$ has finite mean and so as $p \to \infty$ we have $\E(\lvert \cP_{T(1)-1} \rvert ; \lvert \cP_{T(1)-1} \rvert \geq \eta \sqrt{p}) \to 0$. On the other hand, the fact that $R(1)$ is in the domain of attraction of a $\beta$-stable distribution and the choice of $c_p$ (which ensures that $R_\infty$ has Laplace exponent $\lambda^\beta$) implies that $\sqrt{p}\E(1 - e^{-2c_p R(1)}) \to 2^\beta$, and so we get
\[ \sqrt{p}\sum_{x \geq \eta \sqrt{p}} \E \left( 1 - e^{-c_p A_{\xi-x}(\cP^*)} ; \xi \geq x+1 \right) \left( 1 - u^x_p \right) \mathop{\longrightarrow}_{p \to \infty} 0. \]
We have thus proved that for any $\eta > 0$,
\[ \limsup_{p \to \infty} \sqrt{p}  \E \left( \left( 1- e^{- t T(1) / p} \right) \left( 1 - e^{-c_p R(1)} \right) \right) \leq 1 - e^{-\eta \kappa}. \]
Letting $\eta \to 0$ concludes the proof of~\eqref{eq:independence-condition}.

\subsection{Proof of a triangular version of Theorem~\ref{thm:extended-BH}}

The following result extends Theorem~\ref{thm:extended-BH} to a triangular setting.

\begin{theorembis} {\ref{thm:extended-BH}'} [Triangular version of Theorem~\ref{thm:extended-BH}] \label{thm:extended-BH'}
	Assume that:
	\begin{itemize}
		\item Assumptions~\ref{ass-G'} and~\ref{ass-C2'} are satisfied and $V^*_p$ and $\lvert \cP^*_p \rvert$ are independent;
		\item conditionally on $(V^*_p, \lvert \cP^*_p \rvert) = (v,n)$, the locations of the atoms of $\cP^*_p$ are i.i.d.\ with common distribution $v X_p$ for some random variable $X_p \in (0,1]$;
		\item $\liminf_p \E(X_p) > 0$.
	\end{itemize}
	Then Assumption~\ref{ass-C1'} is satisfied with $c_p = v_{[1/g_p]}/\E(X_p)$ and $R_\infty = \cV_\infty$, and~\eqref{eq:independence-condition'} and~\eqref{eq:independence-condition-2'} hold. In particular, $\Pi_\infty$ is the $\psi/\varphi$-snake with $\varphi$ the Laplace exponent of $\cV_\infty$.
\end{theorembis}

\begin{proof}
	We need to check Assumption~\ref{ass-C1'} and that the asymptotic independence conditions~\eqref{eq:independence-condition'} and~\eqref{eq:independence-condition-2'} hold. Let for simplicity define $\xi = \lvert \cP_{T(1)-1} \rvert$. It is well-known that conditionally on $\xi = n$, $Z(1)$ is uniformly distributed on $\{0, \ldots, n-1\}$ (this can be checked from~\eqref{eq:F-0}). Given the law of $(V^*_p, \cP^*_p)$, it follows that $R(1)$ conditioned on $\xi = n$ is equal in distribution to $V^*_p X_p^{(U:n)}$ where $V^*_p, X^1_p, \ldots, X^n_p$ and $U$ are independent, the $X^k_p$ are i.i.d.\ distributed as $X_p$, $U$ is uniformly distributed on $\{1, \ldots, n\}$ and $X_p^{(k:n)}$ is the $k$th order statistic of the $(X^1_p, \ldots, X^n_p)$. Since $X_p^{(U:n)}$ is equal in distribution to $X_p$ by exchangeability, we obtain that $R(1)$ is independent from $\xi$, and thus from $T(1)$, and is equal in distribution to $V^*_p X_p$ with $V^*_p \ci X_p$. 

	It follows that Assumption~\ref{ass-C1'} is satisfied with $c_p = v_{[1/g_p]} / \E(X_p)$ and $R_\infty = \cV_\infty$. The independence condition~\eqref{eq:independence-condition} also holds since we have just argued that 
	\[\P_p(c_p R(1) \geq \varepsilon \mid T(1) \geq \delta p) = \P(c_p V^*_p X_p \geq \varepsilon) \to 0. \] 
	Further, (\ref{eq:independence-condition-2})
	holds since $V^*_p \ci \lvert \cP^*_p \rvert$ by assumption and $p g_p \to \infty$.
\end{proof}

\section{Proof of Proposition \ref{prop:more-ind}}\label{sect:preliminaries}

In the following, for $m \in \N$, $t \geq 0$ and $p\in\N \cup \{\infty\}$ we define 
\begin{equation} \label{eq:def-X}
	X(m) = \sup_{\{0,\ldots,m\}} S - S(m) \ \text{ and } \ X_p(t) = \sup_{[0,t]} S_p - S_p(t) = - \inf_{[0,t]} \hat S^t_p.
\end{equation}

To prove Proposition \ref{prop:more-ind} we will repeatedly use the following two simple lemmas. The first lemma is obvious but the situation it considers will actually be often encountered in what follows. The second lemma follows from Lemma~\ref{lemma:continuity-properties} and the assumption $(R_p, T_p) \Rightarrow (R_\infty, T_\infty)$ with $R_\infty \ci T_\infty$ (so that that $R_\infty$ is almost surely continuous at any random time $\Gamma \in m \sigma(T_\infty)$). Recall also that under Condition~\ref{ass-G'} we have proved in Proposition~\ref{prop:convergence-genealogy} that $T^{-1}_p \Rightarrow T^{-1}_\infty$ which is also needed in the proof.

\begin{lemma}\label{lemma:ind}
	If the pair $(A_\infty, B_\infty)$ is the weak limit of a sequence $(A_p, B_p)$ such that $A_p \in m \cF_{< [pt]}$ and $B_p \in m \cF_{\geq [pt]}$ for some $t \geq 0$, then $A_\infty \ci B_\infty$.
\end{lemma}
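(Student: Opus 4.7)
The plan is to observe that the claimed independence at the level of limits follows from exact independence at every finite level, combined with the fact that weak convergence transports independence across limits. I would phrase the argument in two short moves.

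First I would check that $A_p \ci B_p$ for every finite $p$. Under $\P_p$ the doubly infinite sequence of sticks $(\omega_k, k\in\Z)$ is i.i.d.\ with common law $(V^*_p,\cP^*_p)$, so the $\sigma$-algebras
\[
\cF_{<[pt]} = \sigma(\omega_k : k < [pt]) \quad\text{and}\quad \cF_{\geq [pt]} = \sigma(\omega_k : k \geq [pt])
\]
are generated by disjoint families of independent coordinates and are therefore themselves independent. Since $A_p$ is measurable with respect to $\cF_{<[pt]}$ and $B_p$ with respect to $\cF_{\geq [pt]}$, this yields $A_p \ci B_p$ for each $p$.

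Next I would upgrade this to independence in the limit. Fix bounded continuous test functions $f$ and $g$ on the respective target spaces. The map $(a,b)\mapsto f(a)g(b)$ is bounded and continuous on the product space, so the joint weak convergence $(A_p,B_p) \Rightarrow (A_\infty,B_\infty)$ together with the product structure $\E[f(A_p)g(B_p)] = \E[f(A_p)]\,\E[g(B_p)]$ gives
\[
\E[f(A_\infty)g(B_\infty)] = \lim_{p\to\infty} \E[f(A_p)]\,\E[g(B_p)] = \E[f(A_\infty)]\,\E[g(B_\infty)],
\]
where the last equality uses that the marginal convergences $A_p \Rightarrow A_\infty$ and $B_p \Rightarrow B_\infty$ follow from the joint one by continuity of the projections. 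Since this equality holds for all bounded continuous $f,g$, the laws of $A_\infty$ and $B_\infty$ factor, i.e.\ $A_\infty \ci B_\infty$.

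There is essentially no obstacle here: the lemma is standard, and the only point to double-check is that the triangular setup does not disturb the i.i.d.\ structure of the sticks under $\P_p$, which is automatic by the definition of $\P_p$ in Section~\ref{sec:notation}. I would keep the write-up to a few lines, since the purpose of the lemma is purely to package this routine fact in a form that can be invoked silently in the proofs of Proposition~\ref{prop:more-ind} and related statements.
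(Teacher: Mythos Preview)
Your argument is correct and is precisely the standard reasoning the paper has in mind; in fact the paper gives no proof at all, stating only that the lemma ``is obvious but the situation it considers will actually be often encountered in what follows.'' Your two-step write-up (independence of $\cF_{<[pt]}$ and $\cF_{\geq [pt]}$ under $\P_p$ from the i.i.d.\ structure, followed by passage to the limit via bounded continuous test functions) is exactly how one fills in this omission.
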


\begin{lemma}\label{lemma:conv-RmidT}
	If Condition~\ref{ass-G'} holds and $(R_p, T_p) \Rightarrow (R_\infty, T_\infty)$ with $R_\infty \ci T_\infty$, then for any $t \geq 0$ we have $R_p\mid_{T^{-1}_p(t)} \Rightarrow R_\infty\mid_{T^{-1}_\infty(t)}$ as well as $R_p\mid_{T^{-1}_p(t)-} \Rightarrow R_\infty\mid_{T^{-1}_\infty(t)}$.
\end{lemma}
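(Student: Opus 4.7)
The plan is to apply the continuity property of the stopping operator from Lemma~\ref{lemma:continuity-properties} to the pair $(R_p, T_p^{-1}(t))$. This requires (i) joint weak convergence of $(R_p, T_p^{-1})$ to $(R_\infty, T_\infty^{-1})$, and (ii) the fact that $R_\infty$ almost surely does not jump at the random time $T_\infty^{-1}(t)$. Once both ingredients are in hand, Skorohod's representation theorem will let us transfer the deterministic continuity statement to a weak convergence statement.

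For the joint convergence, Proposition~\ref{prop:convergence-genealogy} gives $T_p^{-1} \Rightarrow T_\infty^{-1}$, and the assumption gives $(R_p, T_p) \Rightarrow (R_\infty, T_\infty)$. Since $T_\infty^{-1}$ is a measurable function of $T_\infty$, the token recalled at the top of the proof of Proposition~\ref{prop:convergence-genealogy} (combining two convergences that share a common marginal whose other coordinate is a measurable function of it) yields $(R_p, T_p^{-1}) \Rightarrow (R_\infty, T_\infty^{-1})$.

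For the no-jump condition, recall that under Assumption~\ref{ass-C1'}, $R_\infty$ is a subordinator, and by hypothesis $R_\infty \ci T_\infty$, hence $R_\infty \ci T_\infty^{-1}(t)$. Conditioning on $T_\infty^{-1}(t) = s$, the event $\{\Delta R_\infty(s) > 0\}$ has probability zero because a subordinator has no fixed discontinuities; integrating against the law of $T_\infty^{-1}(t)$ yields $\P(\Delta R_\infty(T_\infty^{-1}(t)) = 0) = 1$. In particular, $R_\infty \mid_{T_\infty^{-1}(t)-} = R_\infty \mid_{T_\infty^{-1}(t)}$ almost surely.

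To conclude, invoke Skorohod's representation theorem to realize $(R_p, T_p^{-1}(t)) \to (R_\infty, T_\infty^{-1}(t))$ almost surely on some probability space. On the full-probability event where in addition $\Delta R_\infty(T_\infty^{-1}(t)) = 0$, Lemma~\ref{lemma:continuity-properties} applied with $f_p = R_p$ and $t_p = T_p^{-1}(t)$ gives $R_p \mid_{T_p^{-1}(t)} \to R_\infty \mid_{T_\infty^{-1}(t)}$ and $R_p \mid_{T_p^{-1}(t)-} \to R_\infty \mid_{T_\infty^{-1}(t)-} = R_\infty \mid_{T_\infty^{-1}(t)}$ in the Skorohod topology. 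This almost sure convergence implies the claimed weak convergence. The only subtle point is the no-jump condition, which is handled cleanly by the independence $R_\infty \ci T_\infty$; without this independence one could not rule out the subordinator jumping at $T_\infty^{-1}(t)$ and the argument would break down.
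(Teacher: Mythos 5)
Your argument is correct and follows exactly the route the paper indicates (the paper only sketches it in one sentence before the lemma statement): joint convergence of $(R_p,T_p^{-1})$ via Proposition~\ref{prop:convergence-genealogy}, absence of a jump of $R_\infty$ at $T_\infty^{-1}(t)$ from the independence $R_\infty \ci T_\infty$ and the absence of fixed discontinuities of the subordinator $R_\infty$, then Skorohod representation and Lemma~\ref{lemma:continuity-properties}. The only cosmetic point is that the joint convergence $(R_p,T_p^{-1})\Rightarrow(R_\infty,T_\infty^{-1})$ is most cleanly justified by the continuous mapping theorem applied to the inverse map (continuous at the strictly increasing, unbounded $T_\infty$) rather than by the ``token,'' which as stated requires both added coordinates to be measurable functions of the common one, whereas $R_\infty$ is independent of $T_\infty$.
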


For $0 \leq s \leq t$ let
\[ \Xi^s(t) = \left( T^s_\infty \mid_{(T^s_\infty)^{-1}(t-s)-}, \cZ^s_\infty \mid_{(T^s_\infty)^{-1}(t-s)-} \right). \]

Since $R^u_p$ is distributed as $R_p$, the sequence $(R^u_p)$ is tight. In the following, we will assume without loss of generality by working along appropriate subsequences that $R^u_p \Rightarrow R^u_\infty$ with $R^u_\infty$ equal in distribution to $R_\infty$, and that this convergence holds jointly with any other random variables needed. In particular, the $R^u_\infty$ are assumed to live on the same probability space as all the other random variables previously defined, in particular $S_\infty$ and $R_\infty$.

\subsection{Subordinator decomposition}

We first extend the snake-like property of Proposition~\ref{prop:discrete-snake} to the continuum. For further notice we make the following remark.

\begin{remark} \label{rk:proba-1}
	Under Condition~\ref{ass-G'}, $S$ suitably rescaled converges to a L\'evy process with infinite variation and so the condition ``$m$ is not a weak record time'' holds with high probability. In particular, without loss of generality and in order to avoid cumbersome statements we will henceforth assume that~\eqref{eq:discrete-snake} always holds, thereby neglecting an event of vanishing probability.
\end{remark}

The main application of Proposition~\ref{prop:discrete-snake} is to disentangle the dependence of the subordinators $R_\infty, T_\infty, \cZ_\infty$ in the past before time $t$ and in the future after time $t$, where time is now measured in real time units. This is the purpose of the next result.

\begin{prop} \label{prop:T-Z}
	For any $s \leq t \in \R$ such that $\Delta S_\infty(t-s) = 0$, $(T^s_\infty, \cZ^s_\infty)$ can be expressed as a measurable function of $T^t_\infty$, $\cZ^t_\infty$, $\Xi^s(t)$ and $X_\infty(t)$.
\end{prop}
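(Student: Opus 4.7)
The plan is to pass to the limit in the discrete snake-like identity of Lemma~\ref{prop:discrete-snake}, applied to the walk $S^s$ at position $m' = [p(t-s)]$, and then to recover the resulting gluing parameter from the given data.

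First, I would apply Lemma~\ref{prop:discrete-snake} to the walk $S^s$ at position $m'$, once with $W$ taken to be the ladder-time process $T^s$ of $S^s$, and once with $W$ its ladder-height counterpart $\cZ^s$. As justified by Remark~\ref{rk:proba-1}, and modulo an event of vanishing probability, this yields discrete identities of the form
\[ T^s = [\, T^s\mid_{\widetilde{(T^s)^{-1}}(m')},\ \Theta_{((\cZ^s)^{m'})^{-1}\circ X^s(m')}((T^s)^{m'})\,]_{\widetilde{(T^s)^{-1}}(m')+1} \]
and the analogue for $\cZ^s$. The key observation is that $(T^s)^{m'} = T \circ \theta_{[ps]+m'}$ with $[ps]+m' = [pt]+O(1)$, so after rescaling the shifted objects converge to $T^t_\infty$ and $\cZ^t_\infty$, while the restricted parts are precisely the rescaled versions of $\Xi^s(t)_1$ and $\Xi^s(t)_2$.

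Second, I would pass to the limit $p \to \infty$. The joint convergence of all the relevant objects---the $S^s$-based ladder processes together with their $S^t$-shifted versions---follows from Proposition~\ref{prop:convergence-genealogy} combined with the same time-shift/time-reversal token used at the beginning of its proof. The hypothesis $\Delta S_\infty(t-s) = 0$ guarantees that $t-s$ is a continuity point of $S^s_\infty$; combined with Lemma~\ref{lemma:continuity-properties}, this ensures that the cutting, shift, and right-continuous-inverse operations involved in the identity above all commute with the weak limit. This yields the continuum identity
\[ T^s_\infty = [\, \Xi^s(t)_1,\ \Theta_{(\cZ^t_\infty)^{-1}(X^s_\infty(t-s))}(T^t_\infty)\,]_{(T^s_\infty)^{-1}(t-s)}, \]
and the analogous equality for $\cZ^s_\infty$, where $X^s_\infty(t-s) = \sup_{[0,t-s]} S^s_\infty - S^s_\infty(t-s)$.

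The third and final step, which I expect to be the main obstacle, is to show that the scalar $X^s_\infty(t-s)$ is itself a measurable function of $(T^t_\infty,\cZ^t_\infty,\Xi^s(t),X_\infty(t))$. Writing $X^s_\infty(t-s) = |\Xi^s(t)_2| - (S_\infty(t)-S_\infty(s))$ reduces the task to recovering the increment $S_\infty(t)-S_\infty(s)$; combining this with the decomposition $X_\infty(t) = \max(\sup_{[0,s]} S_\infty,\ S_\infty(s) + |\Xi^s(t)_2|) - S_\infty(t)$ and a case analysis on whether $\sup_{[0,t]} S_\infty$ is attained on $[0,s)$ or on $[s,t]$, one can isolate $X^s_\infty(t-s)$ using only the given data---the strict increase of the subordinator $\cZ^t_\infty$ then ensures that $(\cZ^t_\infty)^{-1}$ detects any change in the recovered value. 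Composing the snake-like decomposition from the second step with this reconstruction of $X^s_\infty(t-s)$ completes the proof.
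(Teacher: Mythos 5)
Your overall strategy --- passing the discrete snake identity of Lemma~\ref{prop:discrete-snake} to the limit via Lemma~\ref{lemma:continuity-properties} under the hypothesis $\Delta S_\infty(t-s)=0$ --- is exactly the paper's, but two steps do not go through as written. First, your concatenation identity drops the atom at the straddling ladder index: in the discrete lemma the first slot of $[W,\cdot\,]_{\tilde T^{-1}(m)+1}$ is $W$ itself, so the value $W(T^{-1}(m))$, including the jump across the excursion straddling $m$, is retained; your first slot $\Xi^s(t)_1$ is stopped at $(T^s_\infty)^{-1}(t-s)-$ and therefore loses it. The paper supplies this jump separately: $\Delta T_\infty(T^{-1}_\infty(t))$ equals the exit time of the valley, $T^t_\infty\circ(\cZ^t_\infty)^{-1}\circ X_\infty(t)$, plus $t-T_\infty(T^{-1}_\infty(t)-)$, and $\Delta\cZ_\infty(T^{-1}_\infty(t))$ is the corresponding overshoot. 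Both are functions of the given data, so this gap is fillable, but it must be filled: without it the reconstructed processes are wrong by an additive constant on $[(T^s_\infty)^{-1}(t-s),\infty)$.

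The second gap is more serious: your Step 3 cannot work. The quantity $X^s_\infty(t-s)=\sup_{[s,t]}S_\infty-S_\infty(t)$ is \emph{not} a measurable function of $(T^t_\infty,\cZ^t_\infty,\Xi^s(t),X_\infty(t))$ for $s>0$. Indeed $X_\infty(t)=\max\bigl(\sup_{[0,s]}S_\infty-S_\infty(t),\ X^s_\infty(t-s)\bigr)$, and none of the remaining data sees the path on $[0,s)$ or the increment $S_\infty(t)-S_\infty(s)$: the processes $\Xi^s(t)$, $T^t_\infty$, $\cZ^t_\infty$ are all defined relative to $S_\infty(s)$, respectively $S_\infty(t)$, and $\Xi^s(t)$ stops before the excursion straddling $t-s$, so it does not record $S^s_\infty(t-s)$. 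One can exhibit two paths with identical data but different values of $S_\infty(t)-S_\infty(s)$, hence different $X^s_\infty(t-s)$; your proposed case analysis cannot even decide which branch of the maximum is active. The paper never faces this issue because it reduces to $s=0$ by shifting time, which amounts to taking the fourth datum to be $X^s_\infty(t-s)$ rather than $X_\infty(t)$ --- and this is how the proposition is actually invoked later (Step~2 of the proof of Proposition~\ref{prop:more-ind} conditions on $X_\infty^s(\tau-s)$). You correctly identified the gluing parameter; the fix is to prove the statement for $s=0$ and shift, not to try to manufacture $X^s_\infty(t-s)$ out of $X_\infty(t)$.
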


\begin{proof}
	Upon shifting time, it is enough to prove the result for $s = 0$. By definition we have
	\begin{equation} \label{eq:expression-1}
		T_\infty = \left[ T_\infty, \Theta_{T^{-1}_\infty(t)}(T_\infty) \right]_{T_\infty^{-1}(t)-} + \epsilon_{T_\infty^{-1}(t)} \Delta T_\infty \left( T^{-1}_\infty(t) \right) 
	\end{equation}
	and
	\begin{equation} \label{eq:expression-2}
		\cZ_\infty = \left[ \cZ_\infty, \Theta_{T^{-1}_\infty(t)}(\cZ_\infty) \right]_{T_\infty^{-1}(t)-} + \epsilon_{T_\infty^{-1}(t)} \Delta \cZ_\infty \left( T^{-1}_\infty(t) \right).
	\end{equation}
		
	Now we claim that the shifted processes are given by
	\begin{equation} \label{eq:shifted}
		\Theta_{T^{-1}_\infty(t)}(T_\infty) = \Theta_{(\cZ^t_\infty)^{-1}\circ X_\infty(t)}(T^t_\infty) \ \text{ and } \ \Theta_{T^{-1}_\infty(t)}(\cZ_\infty) = \Theta_{(\cZ^t_\infty)^{-1}\circ X_\infty(t)}(\cZ^t_\infty)
	\end{equation}
	while the jumps are given by
	\begin{equation} \label{eq:jumps}
		\Delta T_\infty(T^{-1}_\infty(t)) = T^t_\infty \circ (\cZ^t_\infty)^{-1}\circ X_\infty(t) + \left(t-T_\infty(T^{-1}_\infty(t)-)\right)
	\end{equation}
	and
	\begin{equation} \label{eq:expression-last}
		\Delta \cZ_\infty(T^{-1}_\infty(t)) = \Delta \cZ^t_\infty \left( T^t_\infty \circ (\cZ^t_\infty)^{-1} \circ X_\infty(t) \right).
	\end{equation}
	
	Indeed, all relations~\eqref{eq:shifted}--\eqref{eq:expression-last} hold at the discrete level, i.e., by replacing $\infty$ by $p < \infty$: for~\eqref{eq:shifted} this is obtained after scaling from Lemma~\ref{prop:discrete-snake}; for~\eqref{eq:jumps} and~\eqref{eq:expression-last} this comes from the fact that $T(T^{-1}(m)) - m = T^m \circ (\cZ^m)^{-1} \circ X(m)$ which expresses the fact that, for the shifted process, the time needed to exit the ``valley'' straddling $t$ is equal to the time needed to go above level $X(m)$ (see Figure \ref{fig:discrete-snake}): this directly implies~\eqref{eq:jumps} and also~\eqref{eq:expression-last} because both sides then correspond to the overshoot when exiting the valley.
	
	Since these relations hold at the discrete level, $(T_p, \cZ_p) \Rightarrow (T_\infty, \cZ_\infty)$ and we consider a continuity point of all the processes involved, we can invoke Lemma~\ref{lemma:continuity-properties} to justify the passage to the limit $p = \infty$ and thus obtain~\eqref{eq:shifted}--\eqref{eq:expression-last}. This completes the proof of the result.
\end{proof}

A direct consequence of the next result is that Proposition~\ref{prop:more-ind} holds for negative indices. Recall the random variables $R^u_\infty$ introduced after Lemma~\ref{lemma:conv-RmidT}, which are weak limits of the $R^u_p$ assumed to live on the same probability space than $S_\infty$.

\begin{corollary} \label{cor:ind}
	Assume that the assumptions of Theorem~\ref{thm:height'} hold. Then for any finite set $J\subset \R \cap (-\infty, 0)$ we have
	\[ R_\infty \ci \left(\left( \cZ_\infty^u, T^u_\infty, R^u_\infty\mid_{(T^u_\infty)^{-1}(-u)} \right)_{u \in J}, \hat S^0_\infty \right). \]
\end{corollary}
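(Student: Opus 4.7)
The plan is to reduce the claim to a finite-$p$ exact independence that follows from the i.i.d.\ structure of the sticks, pass to the limit along a joint-convergence subsequence, and finally apply Proposition~\ref{prop:T-Z} to express $(T^u_\infty, \cZ^u_\infty)$ through quantities already controlled.

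For any $u < 0$ in $J$, the truncation $\mid_{(T^u_p)^{-1}(-u)-}$ keeps only those ladder indices of the shifted walk $S^u_p$ whose underlying shifted time is strictly less than $-u$, i.e., whose original index lies strictly in $[[pu], 0)$. Consequently $\Xi^u_p(0) = \big(T^u_p\mid_{(T^u_p)^{-1}(-u)-}, \cZ^u_p\mid_{(T^u_p)^{-1}(-u)-}\big)$ and $R^u_p\mid_{(T^u_p)^{-1}(-u)-}$ belong to $m\cF_{<0}$, as does $\hat S^0_p$; meanwhile $R_p, T_p, \cZ_p \in m\cF_{\geq 0}$. Since the sticks are i.i.d., this forces the exact finite-$p$ independence
\begin{equation*}
(R_p, T_p, \cZ_p) \ \ci \ \Big( \hat S^0_p, \big(\Xi^u_p(0), R^u_p\mid_{(T^u_p)^{-1}(-u)-}\big)_{u \in J} \Big).
\end{equation*}

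Next, by extracting a subsequence along which $\big(R_p, T_p, \cZ_p, \hat S^0_p, (R^u_p, T^u_p, \cZ^u_p)_{u \in J}\big)$ converges jointly (all marginal sequences are tight, with marginals already identified in Section~\ref{sub:proof-of-thm:main-result-2}), and using Lemma~\ref{lemma:conv-RmidT} to handle the continuity of the $\mid_{h-}$ truncation at the random endpoint $(T^u_\infty)^{-1}(-u)$ (an argument enabled by $R^u_\infty \ci T^u_\infty$, which rules out a joint jump there), I would pass to the limit while preserving independence:
\begin{equation*}
(R_\infty, T_\infty, \cZ_\infty) \ \ci \ \Big( \hat S^0_\infty, \big(\Xi^u(0), R^u_\infty\mid_{(T^u_\infty)^{-1}(-u)}\big)_{u \in J} \Big).
\end{equation*}
A routine three-factor conditioning combines this with the asymptotic independence $R_\infty \ci (T_\infty, \cZ_\infty)$ established just before Proposition~\ref{prop:more-ind} to yield
\[ R_\infty \ \ci \ \Big( (T_\infty, \cZ_\infty), \hat S^0_\infty, \big(\Xi^u(0), R^u_\infty\mid_{(T^u_\infty)^{-1}(-u)}\big)_{u \in J} \Big). \]

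Finally, I would apply Proposition~\ref{prop:T-Z} with $s = u < 0$ and $t = 0$: for each fixed $u$ the condition $\Delta S_\infty(-u) = 0$ holds almost surely and $X_\infty(0) = 0$, so each pair $(T^u_\infty, \cZ^u_\infty)$ becomes a measurable function of $(T_\infty, \cZ_\infty, \Xi^u(0))$; the whole target collection of the corollary is therefore measurable with respect to the right-hand side of the previous display, giving the claim. I expect the main technical hurdle to be the careful justification of the limiting procedure for the truncated processes — namely, verifying the continuity of $\mid_{h-}$ at the random endpoints $(T^u_\infty)^{-1}(-u)$ jointly for all $u \in J$ — which is precisely what Lemma~\ref{lemma:conv-RmidT} together with the Skorohod machinery recalled in Lemma~\ref{lemma:continuity-properties} are designed to handle.
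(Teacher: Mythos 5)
Your proof is correct and follows essentially the same route as the paper's: finite-$p$ measurability of the reversed/truncated quantities with respect to $\cF_{<0}$ versus $\cF_{\geq 0}$ (Lemma~\ref{lemma:ind}), passage to the limit via Lemma~\ref{lemma:conv-RmidT}, combination with the already-established $R_\infty \ci (T_\infty, \cZ_\infty)$, and recovery of $(T^u_\infty, \cZ^u_\infty)$ through Proposition~\ref{prop:T-Z}. The only cosmetic difference is that you carry $\Xi^u(0)$ explicitly through the limit, whereas the paper recovers it (together with the relevant reflected quantity) at the end as a measurable function of $\hat S^0_\infty$.
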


\begin{proof}
	Let $G_\infty = ( R^u_\infty\mid_{(T^u_\infty)^{-1}(-u)} )_{u\in J}$ and $G_p = ( R^u_p\mid_{(T^u_p)^{-1}(-u)-} )_{u\in J}$ for $p \in \N$. By working under appropriate subsequences, it follows from Lemma~\ref{lemma:conv-RmidT} that
	\[ \left( G_p, \hat S_p^0, R_p, \cZ_p, T_p \right) \Rightarrow \left( G_\infty, \hat S_\infty^0, R_\infty, \cZ_\infty, T_\infty \right) \]
	Since for finite $p \in \N$ we have $(G_p,\hat S_p^0) \in m \cF_{<0}$ while $(R_p, \cZ_p, T_p) \in m \cF_{\geq 0}$, Lemma~\ref{lemma:ind} shows that $(G_\infty, \hat S_\infty^0) \ci \left(R_\infty, \cZ_\infty, T_\infty \right)$. Since by assumption we have $R_\infty \ci \left(\cZ_\infty, T_\infty\right)$, this gives further $R_\infty \ci (\cZ_\infty, T_\infty, G_\infty, \hat S^0_\infty)$. This finally entails the desired result as the remaining random variables $(\cZ^u_\infty, T^u_\infty)_{u\leq 0}$ are measurable with respect to $(\cZ_\infty, T_\infty, \hat S^0_\infty)$ in view of Proposition~\ref{prop:T-Z} (because $T_\infty \mid_{T^{-1}_\infty(t)-}$, $\cZ_\infty \mid_{T^{-1}_\infty(t)-}$ and $X_\infty(t)$ are measurable with respect to $\hat S^t_\infty$).
\end{proof}

\begin{remark}
	Contrary to the shifted process $T_\infty^s$ that can be directly defined through the shifted path $S_\infty^s$, the process $R_\infty^s$
	has no other obvious definition at the continuum than its existence in terms of the limit of its discrete counterpart.
\end{remark}	

We will finally need the following result, whose proof uses the same continuity arguments as in the proof of Proposition~\ref{prop:T-Z}. Note in particular (and this is rather crucial) that $R_\infty$ and $R^s_\infty$ are continuous at the points considered because $R_\infty \ci T_\infty$ and $R^s_\infty \ci (\cZ^s_\infty, X_\infty(s))$. Note also that in contrast with the similar decompositions~\eqref{eq:expression-1} and~\eqref{eq:expression-2} for $T_\infty$ and $\cZ_\infty$, there is no extra atom in the decomposition of $R_\infty$ because of this independence structure.

\begin{prop}\label{lem:constinuous-snake-L}
	Assume that the assumptions of Theorem~\ref{thm:height'} hold. Then for every $s \geq 0$, we have almost surely that
	\begin{equation}\label{eq:continuous-snake-L}
		 R_\infty = \left[ R_\infty, \Theta_{(\cZ^s_\infty)^{-1}\circ X_\infty(s)}(R^s_\infty) \right]_{T^{-1}_\infty(s)} = \left[ R_\infty, \Theta_{(\cZ^s_\infty)^{-1}\circ X_\infty(s)}(R^s_\infty) \right]_{T^{-1}_\infty(s)-}.
	\end{equation}
\end{prop}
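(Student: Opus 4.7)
The plan is to lift the discrete snake-like identity of Lemma \ref{prop:discrete-snake} to the continuum, mirroring the strategy used in the proof of Proposition \ref{prop:T-Z}.

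First, apply Lemma \ref{prop:discrete-snake} with $W = R$: writing $R = \sum_{k\ge 0} Z \circ \theta_{T(k)}\,\epsilon_{k+1}$ with $Z = A_{\cZ(\{1\})}(\cP_{T(1)-1}) \in m\cF_{<T(1)}$ places $R$ in the form required by the lemma. Take $m = [ps]$. Under Condition~\ref{ass-G'} the rescaled Lukasiewicz path converges to an infinite-variation L\'evy process, so (as in Remark~\ref{rk:proba-1}) the event ``$[ps]$ is a weak record time'' has vanishing probability and can be ignored. On its complement the lemma gives
\[
R = \left[R,\ \Theta_{(\cZ^{[ps]})^{-1}\circ X([ps])}\bigl(R^{[ps]}\bigr)\right]_{\tilde T^{-1}([ps])+1},
\]
which after rescaling reads
\[
R_p = \left[R_p,\ \Theta_{(\cZ^s_p)^{-1}\circ X_p(s)}\bigl(R^s_p\bigr)\right]_{T^{-1}_p(s)}
\]
up to an index shift of size $g_p \to 0$.

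Next, pass to the limit $p\to\infty$. By working along appropriate subsequences we may assume the joint convergence
\[
\bigl(R_p,\ \cZ^s_p,\ T^s_p,\ R^s_p,\ X_p(s),\ T^{-1}_p(s)\bigr) \Rightarrow \bigl(R_\infty,\ \cZ^s_\infty,\ T^s_\infty,\ R^s_\infty,\ X_\infty(s),\ T^{-1}_\infty(s)\bigr),
\]
using Proposition~\ref{prop:convergence-genealogy} for the genealogical part together with the tightness and independence established after Proposition~\ref{prop:more-ind}. To invoke the continuity assertions of Lemma~\ref{lemma:continuity-properties} for the shift and concatenation operators, we need that $R_\infty$ has no jump at $T^{-1}_\infty(s)$ and that $R^s_\infty$ has no jump at $(\cZ^s_\infty)^{-1}\circ X_\infty(s)$. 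Since $R_\infty$ is a subordinator independent of $T_\infty$ (which is the input to~\ref{eq:independence-condition'}), and $T^{-1}_\infty(s) \in m\,\sigma(T_\infty)$, the first statement follows from the absence of fixed-time atoms of $R_\infty$ conditionally on $T_\infty$. The same subordinator argument, combined with $R^s_\infty \ci (\cZ^s_\infty, X_\infty(s))$ (the time-shifted analogue of Corollary~\ref{cor:ind}, obtained by the same tightness-plus-$\sigma$-algebra argument), gives the second.

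Putting these pieces together yields
\[
R_\infty = \left[R_\infty,\ \Theta_{(\cZ^s_\infty)^{-1}\circ X_\infty(s)}\bigl(R^s_\infty\bigr)\right]_{T^{-1}_\infty(s)}.
\]
Finally, since $R_\infty$ is almost surely continuous at $T^{-1}_\infty(s)$, the restrictions $R_\infty\mid_{T^{-1}_\infty(s)}$ and $R_\infty\mid_{T^{-1}_\infty(s)-}$ coincide, so the concatenation with indices $T^{-1}_\infty(s)$ and $T^{-1}_\infty(s)-$ agree, giving both identities in~\eqref{eq:continuous-snake-L}.

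The main obstacle is verifying the independence that ensures the absence of atoms: one must establish $R^s_\infty \ci (\cZ^s_\infty, X_\infty(s))$ (so that no jump of $R^s_\infty$ occurs at the random level $(\cZ^s_\infty)^{-1}\circ X_\infty(s)$), without having available the full strength of Proposition~\ref{prop:more-ind}. This is handled by the same Lemma~\ref{lemma:ind} dichotomy used in Corollary~\ref{cor:ind}, applied to the shifted walk $S^s$ so that $R^s_p$ is $\cF_{\ge [ps]}$-measurable while a suitable functional encoding $X_p(s)$ lives in $\cF_{<[ps]}$; the conclusion is then chained with the assumed $R_\infty \ci (\cZ_\infty, T_\infty)$. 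Everything else is a routine continuity argument for the Skorohod topology.
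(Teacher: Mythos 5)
Your proof is correct and takes essentially the same route as the paper's (largely implicit) argument: apply the discrete snake identity of Lemma~\ref{prop:discrete-snake} with $W=R$, pass to the limit by the same continuity arguments as in Proposition~\ref{prop:T-Z}, and use $R_\infty \ci T_\infty$ together with $R^s_\infty \ci (\cZ^s_\infty, X_\infty(s))$ (obtained exactly as in Corollary~\ref{cor:ind} via Lemma~\ref{lemma:ind}) to ensure continuity at the random times involved, which also explains the absence of an extra atom and the equality of the two concatenations.
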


\subsection{Another path decomposition}
The proof of Proposition~\ref{prop:more-ind}
relies on one extra path decomposition presented now.
We begin with the following result, which is a consequence of excursion theory applied to the process reflected at its supremum.

\begin{lemma}
	Let $(\cG_t)$ be a filtration such that $S_\infty$ is still a L\'evy process with respect to this filtration. Fix some $\tau\geq0$ and let $e := \left(S_\infty(t+H) - S_\infty(H), t\in[0,\tau - H]\right)$ with $H= T_\infty (T^{-1}_\infty(\tau)-)$ be the last negative excursion of $S_\infty$ away from its supremum. If $X \in m\cG_H$ then $X$ and $e$ are independent conditionally on $H$.
\end{lemma}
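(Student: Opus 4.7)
The plan is to apply It\^o's excursion theory to the L\'evy process $S_\infty$ reflected at its running supremum. Let $n$ denote the associated excursion measure and $\zeta(\omega)$ the length of an excursion $\omega$. Since each ladder time $T_\infty(\alpha)$ is a $(\cG_t)$-stopping time and $S_\infty$ is a L\'evy process with respect to $(\cG_t)$, the strong Markov property applied at these times shows that $T_\infty$ is a subordinator with L\'evy measure $\nu(\mathrm{d}z)=n(\zeta\in\mathrm{d}z)$ with respect to the time-changed filtration $\cG^T_\alpha := \cG_{T_\infty(\alpha)}$, and that the point process of excursions $\sum_\alpha \delta_{(\alpha,e_\alpha)}$ of $S_\infty$ away from its supremum is Poisson with intensity $\mathrm{d}\alpha\otimes n(\mathrm{d}\omega)$ relative to $(\cG^T_\alpha)$.

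The first step is to identify the objects appearing in the lemma through this Poisson point process. Setting $\alpha^*:=T^{-1}_\infty(\tau)$, I observe that $\alpha^*$ is the first-passage time of the subordinator $T_\infty$ above the fixed level $\tau$, hence a $(\cG^T_\alpha)$-stopping time; moreover $H=T_\infty(\alpha^*-)$, and $e$ coincides with the restriction of $e_{\alpha^*}$ to $[0,\tau-H]$. By the classical first-passage description for subordinators, or equivalently by the Palm formula applied to $\sum_\alpha\delta_{(\alpha,e_\alpha)}$, conditionally on $\cG^T_{\alpha^*-}$ the excursion $e_{\alpha^*}$ has law $n(\,\cdot\mid\zeta>\tau-H)$. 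Since $\tau$ is fixed, this conditional law depends on $\cG^T_{\alpha^*-}$ only through $H$, so $e_{\alpha^*}$, and hence $e$, is conditionally independent of $\cG^T_{\alpha^*-}$ given $H$.

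To finish I need the inclusion $\cG_H\subseteq\cG^T_{\alpha^*-}$, which together with the previous step yields the conditional independence of any $X\in m\cG_H$ and $e$ given $H$. For the fixed time $\tau$ and $S_\infty$ of infinite variation, almost surely $\tau$ is not a ladder time, so $\alpha^*$ is a jump time of $T_\infty$; one can then pick rationals $q_k\uparrow\alpha^*$, so the $(\cG_t)$-stopping times $T_\infty(q_k)$ increase to $H$, and right-continuity of the filtration gives $\cG_H\subseteq\bigvee_k\cG_{T_\infty(q_k)}\subseteq\cG^T_{\alpha^*-}$.

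I expect the main obstacle to lie precisely in this last measurability step, since $H$ is not itself a $(\cG_t)$-stopping time. A cleaner alternative would be to invoke Maisonneuve's exit system for the strong Markov process $\sup_{[0,\cdot]}S_\infty-S_\infty(\cdot)$, which directly encodes the conditional independence between the past up to the start of the excursion straddling a fixed time and that excursion; whichever route is chosen, the argument rests entirely on the L\'evy property of $S_\infty$ with respect to $(\cG_t)$ postulated in the hypothesis.
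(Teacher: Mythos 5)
Your proof is correct and is precisely the argument the paper has in mind: the lemma is stated there without proof, merely as ``a consequence of excursion theory applied to the process reflected at its supremum'', and your write-up supplies the standard fleshing-out of that remark (Poisson point process of excursions in local time, first passage of the inverse local time $T_\infty$ above the fixed level $\tau$, and the conditional law $n(\,\cdot \mid \zeta > \tau - H)$ of the straddling excursion, which depends on the past only through $H$). You also correctly identify the one delicate point, namely giving a meaning to $\cG_H$ and checking $\cG_H \subseteq \cG^T_{\alpha^*-}$ when $H$ is a last-exit rather than a stopping time; in the paper's application $X = Z_H$ for an explicit left-continuous adapted process, so your approximation by the stopping times $T_\infty(q_k)$ with $q_k \uparrow \alpha^*$ suffices.
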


\begin{corollary}\label{lemma:subtelty}
	Let $J \subset \R$ be any finite set with $\min J = 0$, $\tau > \max J$ and $e$ and $H$ defined as in the previous lemma. 
	Then $e$ and $R_\infty \mid_{T_\infty^{-1}(\tau)}$ are independent conditionally on the shifted/stopped processes $(\Xi^s(\tau), s \in J)$.
\end{corollary}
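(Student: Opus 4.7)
\textit{Strategy.} The plan is to apply the preceding excursion-theoretic lemma to a suitable enlarged filtration. Set $\cG_t := \cF_t^{S_\infty} \vee \sigma(R_\infty)$, so that $R_\infty$ becomes observable from time $0$, and hope that the L\'evy property of $S_\infty$ is preserved under $\cG$ -- this latter step requires the stronger independence $R_\infty \ci S_\infty$ and will be the main obstacle.

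\textit{Identification of $\cG_H$-measurable data.} Split $J = J_- \sqcup J_+$ where $J_- := \{s \in J : s \leq H\}$ and $J_+ := \{s \in J : s > H\}$. Observe that $H$ coincides with the largest atom of the stopped measure $T_\infty \mid_{T_\infty^{-1}(\tau)-}$, so $H \in \sigma(\Xi^0(\tau))$, and since $0 \in J$ this places $H$ inside the conditioning $\sigma$-algebra. As no ladder time lies in $(H,\tau]$, we have $T_\infty^{-1}(\tau) = T_\infty^{-1}(H)$, and thus $Y := R_\infty\mid_{T_\infty^{-1}(\tau)}$ is $\cG_H$-measurable. For $s \in J_-$, the running supremum of $S_\infty^s$ on $[0,\tau-s]$ is attained at $H-s$ (this is exactly the statement that $H$ is the last ladder time of $S_\infty$ on $[0,\tau]$), so $\Xi^s(\tau)$ depends only on the restriction of $S_\infty$ to $[s,H]$ and is $\cG_H$-measurable. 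For $s \in J_+$, $\Xi^s(\tau)$ is a deterministic function of the excursion $e$.

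\textit{Conclusion via the excursion-theoretic lemma.} Define $X := (Y,(\Xi^s(\tau))_{s \in J_-})$ (a $\cG_H$-measurable random object, the random length of the tuple being $\sigma(H)$-measurable and hence $\cG_H$-measurable). The preceding lemma gives $X \ci e \mid H$, and a standard factorization of conditional independence -- namely that $(Y,A) \ci e \mid H$ implies $Y \ci e \mid (H,A)$ for any $A$ in $\sigma(X)$ -- yields $Y \ci e \mid (H,(\Xi^s(\tau))_{s \in J_-})$. Augmenting the conditioning by $(\Xi^s(\tau))_{s \in J_+}$, a function of $e$ (and $H$), preserves the conditional independence, so $Y \ci e \mid (H,(\Xi^s(\tau))_{s \in J})$. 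Finally, since $H$ is already $\sigma(\Xi^0(\tau))$-measurable, it is redundant in the conditioning and one concludes $Y \ci e \mid (\Xi^s(\tau))_{s \in J}$.

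\textit{Main obstacle.} The delicate step is justifying the enlargement $\cG_t = \cF_t^{S_\infty} \vee \sigma(R_\infty)$: at this stage of the proof only $R_\infty \ci (T_\infty,\cZ_\infty)$ is available, while the L\'evy property of $S_\infty$ under $\cG$ requires the stronger $R_\infty \ci S_\infty$. The natural route is via It\^o excursion theory for the spectrally positive L\'evy process $S_\infty$: the excursion point process below the running supremum is independent of the ladder height subordinator $\cZ_\infty$, so $S_\infty$ is determined by $(T_\infty,\cZ_\infty)$ together with an independent Poisson data, and one needs to upgrade $R_\infty \ci (T_\infty,\cZ_\infty)$ to independence of $R_\infty$ from this excursion data as well -- presumably through a recoupling argument on an enriched probability space that preserves the joint law of $(R_\infty,T_\infty,\cZ_\infty)$ while decorrelating $R_\infty$ from the excursions. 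This is the technical core of the argument; once it is in place, the remainder is essentially the bookkeeping carried out in the three preceding paragraphs.
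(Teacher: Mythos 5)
There is a genuine gap, and you have in fact pointed at it yourself: your argument hinges on enlarging the filtration to $\cG_t = \cF^{S_\infty}_t \vee \sigma(R_\infty)$, which requires the full independence $R_\infty \ci S_\infty$. This is not available at this stage — only $R_\infty \ci (T_\infty,\cZ_\infty)$ is — and it cannot be obtained here without circularity: independence of $R_\infty$ from the entire forward path (equivalently, from the ladder data \emph{and} the excursions of $S_\infty$ below its supremum) is essentially what the whole induction of Proposition~\ref{prop:more-ind} is building towards, since the collection $(\cZ^t_\infty, T^t_\infty)_{t\in\Q}$ generates the path. Your proposed ``recoupling on an enriched space'' would have to manufacture exactly this independence, and nothing in the hypotheses supplies it; at the discrete level $R$ and the excursions are genuinely correlated through the sticks sitting at the ladder epochs.

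The paper's proof avoids the obstacle by choosing the \emph{smaller} filtration $\cG_t = \sigma\left( R_\infty\mid_{T_\infty^{-1}(t)},\, S_\infty\mid_t \right)$, i.e., it only injects the \emph{stopped} process $R_\infty\mid_{T_\infty^{-1}(t)}$ rather than all of $R_\infty$. This is enough, because the only $R$-information needed at time $H$ is $R_\infty\mid_{T_\infty^{-1}(\tau)} = R_\infty\mid_{T_\infty^{-1}(H)}$, which is $\cG_H$-measurable. The Lévy property of $S_\infty$ in this filtration then follows from a one-line pre-limit argument: for finite $p$, $\Theta_t(S_p) \in m\cF_{\geq[pt]}$ while $\left( R_p\mid_{T_p^{-1}(t)-}, S_p\mid_t \right) \in m\cF_{<[pt]}$, so the required independence of increments holds exactly at the discrete level and passes to the limit. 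Your subsequent bookkeeping (splitting $J$ at $H$, noting $T_\infty^{-1}(\tau)=T_\infty^{-1}(H)$, that $\Xi^s(\tau)$ for $s$ before the split is a function of $S_\infty\mid_H$ and for $s$ after is a function of $e$, and then peeling off the conditioning) matches the paper's Step~2 almost verbatim; it is only the construction of the filtration that needs to be replaced as above.
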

\begin{proof}
	The main idea is to apply the previous lemma with the filtration $\cG_t = \sigma(Z_t)$ where
	\[ Z_t = \left( R_\infty \mid_{T_\infty^{-1}(t)}, S_\infty \mid_t \right), \ t \geq 0, \]
	 and $X = Z_H$. We decompose the proof into two steps.
	
	\medskip
	
\noindent \textit{Step 1.} We first prove that the assumptions of the previous result hold: by definition we have $X \in m \cG_H$ and so we need to show that $S_\infty$ is a L\'evy process in the filtration $(\cG_t)$. To do so we only have to prove that for any bounded and continuous functions $f$ and $g$ and any $t \geq 0$ we have
	\[ \E \left[ f \left( \Theta_t(S_\infty) \right) g \left( R_\infty\mid_{T_\infty^{-1}(t)}, S_\infty \mid_t \right) \right] = \E \left[ f \left(S_\infty \right) \right] \E \left[ g \left( R_\infty\mid_{T^{-1}_\infty(t)}, S_\infty \mid_t \right) \right]. \]
	For finite $p$ this is true as
	\[ \Theta_t(S_p) \in m \cF_{\geq [pt]} \ \text{ while } \ \left( R_p\mid_{T_p^{-1}(t)-}, S_p \mid_t \right) \in m \cF_{< [p t]} \]
	and so the result follows by letting $p \to \infty$.
	\\

	\noindent \textit{Step 2.} In the first step we have proved that the assumptions of the previous lemma hold, which gives the independence between $Z_H$ and the excursion $e$ conditionally on $H$. By definition of $H$, $S_\infty$ does not accumulate any local time at its maximum on $[H, \tau]$ and so we have $T_\infty^{-1}(\tau) = T_\infty^{-1}(H)$. Thus, we obtain the independence between
	\begin{equation} \label{eq:ind}
		\left( R_\infty\mid_{T_\infty^{-1}(\tau) }, S_\infty \mid_H \right) \ \text{ and } \ e
	\end{equation}
	conditionally on $H$. Let $\sigma^* = \min\{s \in J \cup \{\tau\}: s \geq H\}$ and define	
	\[ A = \left(\Xi^s(\tau), s \in J \cap [0,\sigma^*) \right) \ \text{ and } \ B = \left( \Xi^s(\tau), s \in J \cap [\sigma^*, \tau] \right). \]
	Then by definition of $H$ we have $A\in m \sigma \left( S_\infty \mid_H \right)$ while $B \in m \sigma \left( e \right)$. Thus~\eqref{eq:ind} implies that
	\[ \left( R_\infty\mid_{T_\infty^{-1}(\tau)}, A\right) \ \text{ and } \ \left(e,B\right) \]
	are independent conditionally on $H$. If $X \ci (Y, Z)$, then $X$ and $Y$ are independent conditionally on $Z$: using this observation twice, we obtain from the previous independence statement that
	\[ \left( R_\infty\mid_{T_\infty^{-1}(\tau)} \right) \ \text{ and } \ e \]
	are independent conditionally on $(H, A, B)$ and so also simply conditionally on $(A, B)$ since $H \in m \sigma(A)$. This completes the proof of the result.
\end{proof}

\subsection{Proof of Proposition~\ref{prop:more-ind}}
We decompose the proof into several steps. The last step of the proof will use the following lemma.

\begin{lemma}\label{lem:sub}
	If $S_1$ and $S_2$ are two i.i.d.\ subordinators, then for any random time $\kappa$ independent from $(S_1, S_2)$, $[S_1, S_2]_\kappa$ is distributed as $S_1$ and is independent of~$\kappa$.
\end{lemma}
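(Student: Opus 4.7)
The plan is to condition on $\kappa$ and reduce the statement to the standard fact that a subordinator enjoys the (simple) Markov property at any deterministic time, combined with the fact that $S_1$ and $S_2$ are i.i.d. More precisely, by independence of $\kappa$ from $(S_1,S_2)$, it suffices to show that for every deterministic $h \geq 0$, the process $[S_1,S_2]_h$ is distributed as $S_1$; integrating against the law of $\kappa$ will then give both that $[S_1,S_2]_\kappa$ has the law of $S_1$ and that $[S_1,S_2]_\kappa$ is independent of $\kappa$ (since its conditional law given $\kappa = h$ does not depend on $h$).

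To prove the deterministic statement, I would unpack the definition of $[\cdot,\cdot]_h$: the concatenated path equals $S_1(x)$ for $x \leq h$ and $S_1(h) + S_2(x-h)$ for $x > h$. On the one hand, by the Markov property of $S_1$ at the deterministic time $h$, the processes $(S_1(x))_{x \leq h}$ and $(S_1(x+h)-S_1(h))_{x \geq 0}$ are independent, with the second one distributed as $S_1$. On the other hand, by construction $S_2$ is independent of $S_1$ and has the same law as $S_1$. Hence $(S_1(x))_{x \leq h}$ and $(S_2(x))_{x \geq 0}$ form the same joint law as $(S_1(x))_{x \leq h}$ and $(S_1(x+h)-S_1(h))_{x \geq 0}$, so that $[S_1, S_2]_h$ and $S_1$ have the same finite-dimensional distributions, and hence the same law as c\`adl\`ag processes.

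Finally, to conclude the independence-from-$\kappa$ part cleanly, I would write, for any bounded continuous functionals $F$ on the Skorokhod space and $g$ on $\R_+$,
\[
\E\bigl[F([S_1,S_2]_\kappa)\, g(\kappa)\bigr]
= \int \E\bigl[F([S_1,S_2]_h)\bigr]\, g(h)\, \P_\kappa(\d h)
= \E[F(S_1)]\,\E[g(\kappa)],
\]
where the first equality uses $\kappa \ci (S_1,S_2)$ and the second uses the deterministic identity in law just established. This simultaneously yields $[S_1,S_2]_\kappa \stackrel{d}{=} S_1$ and $[S_1,S_2]_\kappa \ci \kappa$. There is no serious obstacle here; the only point worth being careful about is the measurability of $(h, \omega) \mapsto [S_1,S_2]_h(\omega)$, which is immediate from the explicit piecewise formula defining $[\cdot,\cdot]_h$.
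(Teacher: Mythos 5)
Your argument is correct; note that the paper states Lemma~\ref{lem:sub} without proof, treating it as a standard fact, and your write-up supplies exactly the expected justification: condition on $\kappa=h$ using $\kappa\ci(S_1,S_2)$, observe that $((S_1(x))_{x\leq h},(S_2(x))_{x\geq0})$ has the same joint law as $((S_1(x))_{x\leq h},(\Theta_h(S_1)(x))_{x\geq0})$ by the independence and stationarity of increments together with the i.i.d.\ hypothesis, and conclude via the identity $S_1=[S_1,\Theta_h(S_1)]_h$ before integrating over the law of $\kappa$. Nothing is missing.
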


\noindent \textit{Step 0.} The very first step is to show that it is enough to prove the result for finite sets $J$ such that $0 = \min J$. Indeed, assume this is the case and consider any finite set $J \subset \R$ with $0 \in J$. Write $J = J_- \cup J_+$ where $J_-$ consists of all the strictly negative indices and $J_+ \subset \R_+$. Lemma~\ref{lemma:ind} implies that
\[ \left(R_\infty,\left( \cZ^s_\infty, T^s_\infty \right)_{s \in J_+} \right)\ci \hat S^0_\infty \]
and thus
\[ R_\infty \ci \left(\hat S^0_\infty,\left( \cZ^s_\infty, T^s_\infty \right)_{s \in J_+} \right) \]
since we assume $R_\infty \ci \left( \cZ^s_\infty, T^s_\infty \right)_{s \in J_+}$. Since $\min J_+ = 0$ and $(\cZ^u_\infty ,T^u_\infty )\in m \sigma(\cZ_\infty, T_\infty, \hat S^0_\infty)$ for any $u \leq 0$ by Proposition~\ref{prop:T-Z} we get the desired result.
\\

Therefore, the rest of the proof is devoted to proving that
\[ R_\infty \ci \left( T^t_\infty, \cZ^t_\infty \right)_{t \in J} \]
for any finite set $J \subset \Q$ with $\min J = 0$. The proof operates by induction on $\lvert J \rvert \geq 1$. For $\lvert J \rvert = 1$ this is simply 
$R_\infty \ci \left(\cZ_\infty, T_\infty\right)$ which holds by assumption, so let $J \subset \Q$ be a finite set with $\min J = 0$, let $\tau > \max J$ and $J_+ = J \cup \{\tau\}$: assuming that $R_\infty \ci (T^s_\infty, \cZ^s_\infty)_{s \in J}$, the rest of the proof is devoted to proving that $R_\infty \ci (\cZ^s_\infty, T^s_\infty)_{s \in J_+}$, i.e., that
\begin{equation} \label{eq:goal-ind}
	\E \left[ f \left( R_\infty \right) g \left( (\cZ_\infty^s, T^s_\infty)_{s \in J_+} \right) \right] = \E \left[ f \left( R_\infty \right) \right] \E \left[ g \left( (\cZ_\infty^s, T^s_\infty)_{s \in J_+} \right) \right]
\end{equation}
for any bounded measurable functions $f, g$. Recall the random variables $R_\infty^s$ introduced after Lemma~\ref{lemma:conv-RmidT} and in order to ease the notation, for $s \in J$ define 
\[ \Gamma = \Theta_{(\cZ^{\tau}_\infty)^{-1} \circ X_\infty(\tau)}(R_\infty^\tau) \]
so that 
\[R_\infty = \left[ R_\infty, \Gamma \right]_{T_\infty^{-1}(\tau)} \]
in view of Proposition~\ref{lem:constinuous-snake-L}. We now investigate in more details 
 the structure of this decomposition.

\bigskip

\noindent \textit{Step 1.}
A consequence of Corollary~\ref{cor:ind} (replacing $R_\infty$ with $R^\tau_\infty$ and taking $u=-\tau$) is that
\[ R^{\tau}_\infty \ci \left( (\cZ^s_\infty, T^s_\infty)_{s \in J_+}, R_\infty\mid_{T_\infty^{-1}(\tau)}, X_\infty(\tau) \right). \]
As $R^{\tau}_\infty$ is a $\beta$-stable subordinator, this entails that $\Gamma$
is a $\beta$-stable subordinator independent from $\left((\cZ^s_\infty, T^s_\infty)_{s \in J_+}, R_\infty\mid_{T_\infty^{-1}(\tau)}\right)$.
\\

\noindent \textit{Step 2.} In this step we prove that
\begin{equation} \label{eq:step-3}
	\E \left[ f\left( R_\infty\mid_{T_\infty^{-1}(\tau)} \right) \mid (\cZ_\infty^s, T^s_\infty)_{s \in J_+} \right] = 
	\E \left[ f\left( R_\infty\mid_{T_\infty^{-1}(\tau)} \right) \mid T_\infty^{-1}(\tau)\right].
\end{equation}
To prove this we will use the following independence and measurability results:
\begin{equation} \label{eq:step-2}
	\left( R_\infty\mid_{T_\infty^{-1}(\tau)}, (\Xi^s(\tau), X_\infty^s(\tau-s))_{s \in J} \right) \ci \left(\cZ^{\tau}_\infty, T^{\tau}_\infty\right)
\end{equation}
as a consequence of Lemma~\ref{lemma:ind}, and
\begin{equation}\label{eq:step-22}
\left(X_\infty^s(\tau-s), s\in J_+\right) \ \in m \sigma(e)
\end{equation}
where $e$ is defined as in Corollary~\ref{lemma:subtelty}. Indeed, either $e$ does not straddle $s$ and then $X_\infty^s(\tau-s) = -\min e$, or $e$ straddles $s$ and then $X_\infty^s(\tau-s)$ can be computed from~$e$.

Let us now proceed with the proof of~\eqref{eq:step-3}. By shifting at time $s$, Proposition~\ref{prop:T-Z} implies that for any $s \in J$ we have
\[ (\cZ_\infty^s, T^s_\infty) \in m \sigma \left( \cZ^{\tau}_\infty, T^{\tau}_\infty, \Xi^s(\tau), X_\infty^s(\tau-s) \right) \]
and thus
\[ (\cZ_\infty^s, T^s_\infty)_{s \in J_+} \in m \sigma \left( \cZ^{\tau}_\infty, T^{\tau}_\infty, \left(\Xi^s(\tau), X_\infty^s(\tau-s)\right)_{s\in J} \right). \]
In particular, the law of total expectation gives
\begin{multline}
	\E \left[ f\left( R_\infty\mid_{T_\infty^{-1}(\tau)} \right) \mid (\cZ_\infty^s, T^s_\infty)_{s \in J_+} \right] \\
	= \E\left(\E \left[ f\left( R_\infty\mid_{T_\infty^{-1}(\tau)} \right) \mid \cZ^{\tau}_\infty, T^{\tau}_\infty, \left(\Xi^s(\tau), X_\infty^s(\tau-s)\right)_{s\in J} \right] \mid (\cZ_\infty^s, T^s_\infty)_{s \in J_+} \right). \label{eq:hh}
\end{multline}
Next, if $X \ci (Y, Z)$, then $X$ and $Y$ are independent conditionally on $Z$: using this observation with the independence relation~\eqref{eq:step-2} allows to get rid of 
$\left(\cZ^\tau_\infty, T^{\tau}_\infty\right)$ in the conditioning, i.e.,
\begin{multline*}
	\E \left[ f \left( R_\infty\mid_{T_\infty^{-1}(\tau)} \right) \mid \cZ^\tau_\infty, T^{\tau}_\infty, (\Xi^s(\tau), X_\infty^s(\tau-s))_{s \in J} \right]\\
	= \E \left[ f \left( R_\infty\mid_{T_\infty^{-1}(\tau)} \right) \mid (\Xi^s(\tau), X_\infty^s(\tau-s))_{s \in J}\right]
\end{multline*}
which leads further to
\[ \E \left[ f\left( R_\infty\mid_{T_\infty^{-1}(\tau)} \right) \mid \cZ^\tau_\infty, T^{\tau}_\infty, (\Xi^s(\tau), X_\infty^s(\tau-s))_{s \in J}\right] = \E \left[ f\left( R_\infty\mid_{T_\infty^{-1}(\tau)} \right) \mid (\Xi^s(\tau))_{s \in J}\right] \]
according to Corollary~\ref{lemma:subtelty} and~\eqref{eq:step-22}. At this point, we have therefore proved that
\[ \E \left[ f\left( R_\infty\mid_{T_\infty^{-1}(\tau)} \right) \mid \cZ^\tau_\infty, T^{\tau}_\infty, (\Xi^s(\tau), X_\infty^s(\tau-s))_{s \in J} \right] 	= \E \left[ f\left( R_\infty\mid_{T_\infty^{-1}(\tau)} \right) \mid (\Xi^s(\tau))_{s \in J} \right] \]
and since 
\begin{equation}\label{eq:ddf}
(\Xi^s(\tau))_{s \in J} \in m \sigma(\cZ^s_\infty, T^s_\infty, s \in J) \subset m \sigma(\cZ^s_\infty, T^s_\infty, s \in J_+),
\end{equation}\eqref{eq:hh} implies that
\[ \E \left[ f\left( R_\infty\mid_{T_\infty^{-1}(\tau)} \right) \mid (\cZ_\infty^s, T^s_\infty)_{s \in J_+} \right] = \E \left[ f\left( R_\infty\mid_{T_\infty^{-1}(\tau)} \right) \mid (\Xi^s(\tau))_{s \in J}\right]. \]
Let us now evaluate the right-hand side of the latter identity. Since $(R_\infty) \ci (\cZ_\infty^s, T^s_\infty)_{s \in J}$ by induction hypothesis and because of~\eqref{eq:ddf}, we get that
$ R_\infty\mid_{T_\infty^{-1}(\tau)}$ only depends on $(\Xi^s(\tau), s \in J)$ through $T_\infty^{-1}(\tau)$, i.e.,
\[ \E \left[ f\left( R_\infty\mid_{T_\infty^{-1}(\tau)} \right) \mid (\Xi^s(\tau))_{s \in J} \right] = \E \left[ f\left( R_\infty\mid_{T_\infty^{-1}(\tau)} \right) \mid T_\infty^{-1}(\tau) \right]. \]
This concludes the proof of this step.
\\

\noindent \textit{Step $3$.} We now conclude the proof: Step 1 and~\eqref{eq:step-3} entail, using total expectation,
\begin{align*}
	\E \left[ f \left( R_\infty \right) g \left( (\cZ^s_\infty, T^s_\infty)_{s \in J_+} \right) \right] & = \E \left[ f \left( \left[ R_\infty\mid_{T_\infty^{-1}(\tau)}, \Gamma\right] \right) g \left( (\cZ^s_\infty, T^s_\infty)_{s \in J_+} \right) \right]\\
	& = \E \left[ g \left( (\cZ^s_\infty, T^s_\infty)_{s \in J_+} \right) \E \left[ f \left( \left[ R_\infty\mid_{T_\infty^{-1}(\tau)}, \Gamma\right] \right) \mid (\cZ^s_\infty, T^s_\infty)_{s \in J_+}\right] \right]\\
	& = \E \left[g \left( (\cZ^s_\infty, T^s_\infty)_{s \in J_+}\right) \E \left[ f \left( \left[ R_\infty\mid_{T_\infty^{-1}(\tau)}, \Gamma\right] \right) \mid T_\infty^{-1}(\tau) \right] \right].
\end{align*} 
At this point we want to apply Lemma~\ref{lem:sub}. We first note that 
$\left(R_\infty\mid_{(T_\infty^s)^{-1}(\tau)}, \Gamma\right)$ is equal in distribution to
$\left(R_\infty\mid_{(T_\infty^s)^{-1}(\tau)}, \tilde \Gamma\right)$ where $\tilde \Gamma$ is independent of $(R_\infty, T_\infty)$
according to Step 1. Further, since $R_\infty \ci T_\infty$, it follows that
$R_\infty$, $\tilde \Gamma$ and $T_\infty$ are mutually independent. Finally, 
applying Lemma~\ref{lem:sub} with $S_1 = R_\infty$, $S_2 = \tilde \Gamma$ and $\kappa = T_\infty^{-1}(\tau)$ we get that $[R_\infty\mid_{T_\infty^{-1}(\tau)}, \tilde \Gamma]$ is equal in distribution to $R_\infty$ and is independent from $T_\infty^{-1}(\tau)$, i.e.,
\[ \E \left[ \E \left[ f \left( \left[ R_\infty\mid_{T_\infty^{-1}(\tau)}, \tilde \Gamma\right] \right) \mid T_\infty^{-1}(\tau) \right] \right] \ = \ \E \left[ f \left( R_\infty \right) \right]. \]
This concludes the proof of Proposition~\ref{prop:more-ind}.

\begin{figure}[p!]
	\centering
		\begin{tikzpicture}
			[color=black]
				% omega_0
				\begin{scope}[shift={(0,0)}]
					\draw (0,0) -- (0,2);
					\stub{1.5}
					\stub{.5}
					\node[anchor=north] at (0,0) {$\omega_0$};
				\end{scope}
				% omega_1
				\begin{scope}[shift={(1,0)}]
					\draw (0,0) -- (0,1.5);
					\stub{1.2}
					\stub{.5}
					\node[anchor=north] at (0,0) {$\omega_1$};
				\end{scope}
				% omega_2
				\begin{scope}[shift={(2,0)}]
					\draw (0,0) -- (0,1.5);
					\stub{.9}
					\node[anchor=north] at (0,0) {$\omega_2$};
				\end{scope}
				% omega_3
				\begin{scope}[shift={(3,0)}]
					\draw (0,0) -- (0,1);
					\node[anchor=north] at (0,0) {$\omega_3$};
				\end{scope}
				% omega_4
				\begin{scope}[shift={(4,0)}]
					\draw (0,0) -- (0,2);
					\node[anchor=north] at (0,0) {$\omega_4$};
				\end{scope}
				% omega_5
				\begin{scope}[shift={(5,0)}]
					\draw (0,0) -- (0,4);
					\stub{3.5}
					\stub{2.5}
					\stub{1}
					\node[anchor=north] at (0,0) {$\omega_5$};
				\end{scope}
				% omega_6
				\begin{scope}[shift={(6,0)}]
					\draw (0,0) -- (0,2);
					\node[anchor=north] at (0,0) {$\omega_6$};
				\end{scope}
				% omega_7
				\begin{scope}[shift={(7,0)}]
					\draw (0,0) -- (0,1);
					\node[anchor=north] at (0,0) {$\omega_7$};
				\end{scope}
				% omega_8
				\begin{scope}[shift={(8,0)}]
					\draw (0,0) -- (0,1);
					\stub{1}
					\node[anchor=north] at (0,0) {$\omega_8$};
				\end{scope}
				% omega_9
				\begin{scope}[shift={(9,0)}]
					\draw (0,0) -- (0,1);
					\node[anchor=north] at (0,0) {$\omega_9$};
				\end{scope}
		\end{tikzpicture}
	\caption[Initial sequence of sticks]{Sequence of sticks used in the next figures: this sequence corresponds to one chronological tree.}
	\label{fig:sequential-construction-sticks}
\end{figure}
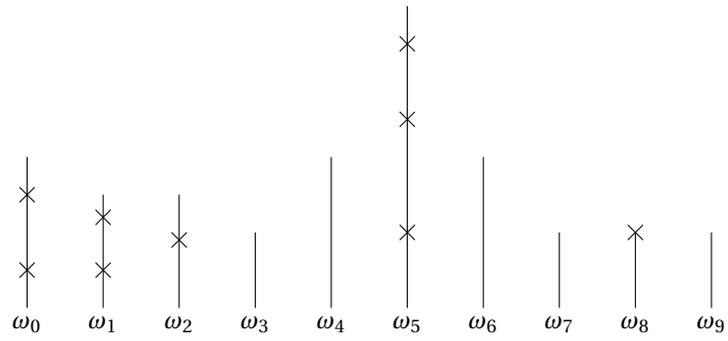

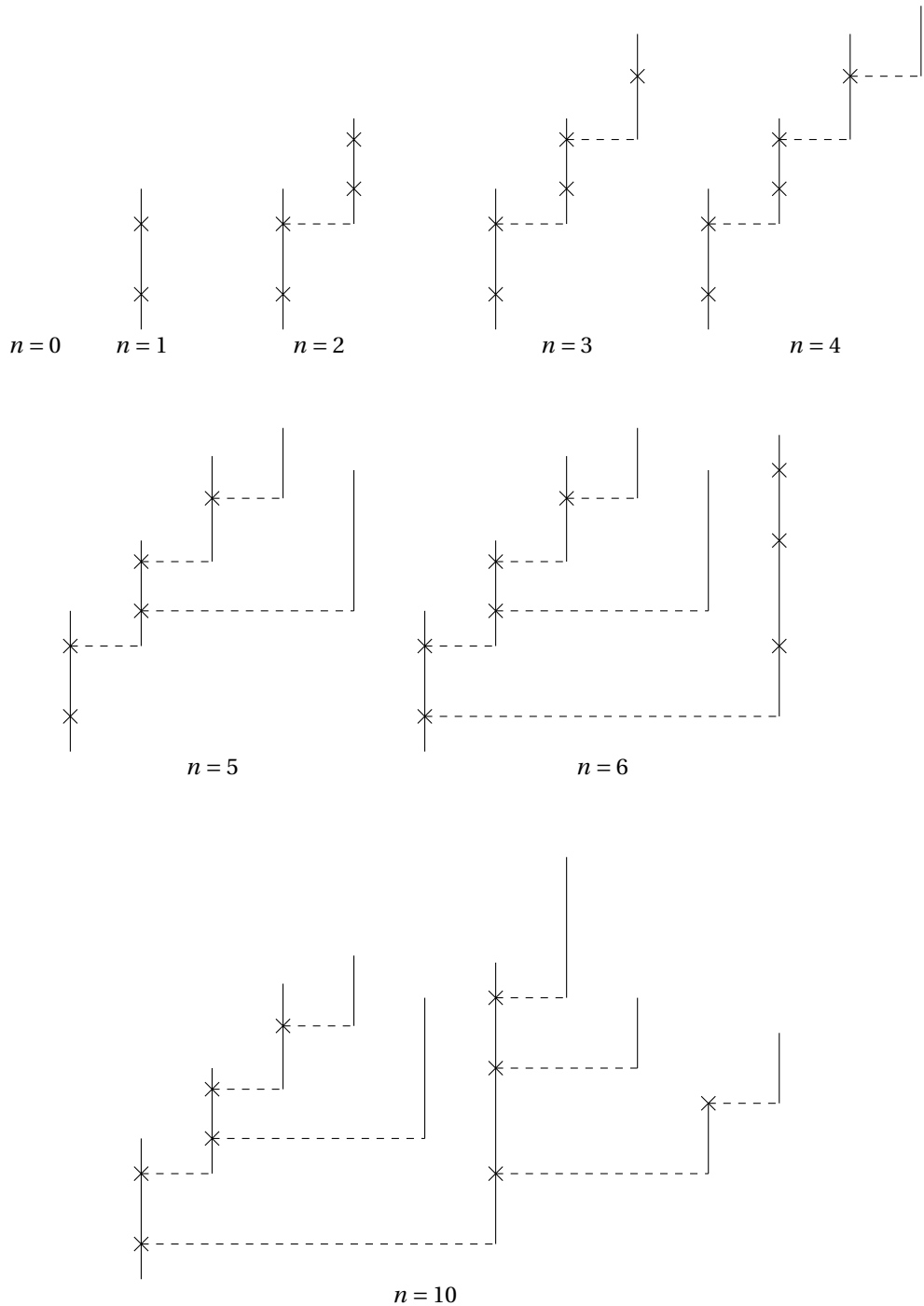
\begin{figure}[p!]
	\centering
		\begin{tikzpicture}
			[color=black]
			\begin{scope}[shift={(-.5,0)}]
				\node[anchor=north] at (0,0) {$n = 0$};
			\end{scope}
			% \n/\x/\shift = 1/0/1
			\begin{scope}[shift={(0,0)}]
				% root
				\begin{scope}[shift={(1,0)}]
					\draw (0,0) -- (0,2);
					\stub{1.5}
					\stub{.5}
				\end{scope}
				\node[anchor=north] at (1,0) {$n = 1$};
			\end{scope}
			% \n/\x/\shift = 2/.5/3
			\begin{scope}[shift={(3,0)}]
				% root
				\draw (0,0) -- (0,2);
				\stub{1.5}
				\draw[dashed] (0,1.5) -- (1,1.5);
				\begin{scope}[shift={(1,1.5)}]
					\draw (0,0) -- (0,1.5);
					\stub{1.2}
					\stub{.5}
				\end{scope}
				\stub{.5}
				\node[anchor=north] at (.5,0) {$n = 2$};
			\end{scope}
			% \n/\x/\shift = 3/1/6
			\begin{scope}[shift={(6,0)}]
				% root
				\draw (0,0) -- (0,2);
				\stub{1.5}
				% 1
				\draw[dashed] (0,1.5) -- (1,1.5);
				\begin{scope}[shift={(1,1.5)}]
					\draw (0,0) -- (0,1.5);
					\stub{1.2}
					% 2
					\draw[dashed] (0,1.2) -- (1,1.2);
					\begin{scope}[shift={(1,1.2)}]
						% 3
						\draw (0,0) -- (0,1.5);
						\stub{.9}
					\end{scope}
					\stub{.5}
				\end{scope}
				\stub{.5}
				\node[anchor=north] at (1,0) {$n = 3$};
			\end{scope}
			% \n/\x/\shift = 4/1.5/9
			\begin{scope}[shift={(9,0)}]
				% root
				\draw (0,0) -- (0,2);
				\stub{1.5}
				% 1
				\draw[dashed] (0,1.5) -- (1,1.5);
				\begin{scope}[shift={(1,1.5)}]
					\draw (0,0) -- (0,1.5);
					\stub{1.2}
					% 2
					\draw[dashed] (0,1.2) -- (1,1.2);
					\begin{scope}[shift={(1,1.2)}]
						% 3
						\draw (0,0) -- (0,1.5);
						\stub{.9}
						% 4
						\draw[dashed] (0,.9) -- (1,.9);
						\begin{scope}[shift={(1,.9)}]
							\draw (0,0) -- (0,1);
						\end{scope}
					\end{scope}
					\stub{.5}
				\end{scope}
				\stub{.5}

				\node[anchor=north] at (1.5,0) {$n = 4$};
			\end{scope}

			% \n/\x/\shift = 5/2/0
			\begin{scope}[shift={(0,-6)}]
				% root
				\draw (0,0) -- (0,2);
				\stub{1.5}
				% 1
				\draw[dashed] (0,1.5) -- (1,1.5);
				\begin{scope}[shift={(1,1.5)}]
					\draw (0,0) -- (0,1.5);
					\stub{1.2}
					% 2
					\draw[dashed] (0,1.2) -- (1,1.2);
					\begin{scope}[shift={(1,1.2)}]
						% 3
						\draw (0,0) -- (0,1.5);
						\stub{.9}
						% 4
						\draw[dashed] (0,.9) -- (1,.9);
						\begin{scope}[shift={(1,.9)}]
							\draw (0,0) -- (0,1);
						\end{scope}
					\end{scope}
					\stub{.5}
					% 5
					\draw[dashed] (0,.5) -- (3,.5);
					\begin{scope}[shift={(3,.5)}]
						\draw (0,0) -- (0,2);
					\end{scope}
				\end{scope}
				\stub{.5}
				\node[anchor=north] at (2,0) {$n = 5$};
			\end{scope}
			% \n/\x/\shift = 6/2.5/5
			\begin{scope}[shift={(5,-6)}]
				% root
				\draw (0,0) -- (0,2);
				\stub{1.5}
				% 1
				\draw[dashed] (0,1.5) -- (1,1.5);
				\begin{scope}[shift={(1,1.5)}]
					\draw (0,0) -- (0,1.5);
					\stub{1.2}
					% 2
					\draw[dashed] (0,1.2) -- (1,1.2);
					\begin{scope}[shift={(1,1.2)}]
						% 3
						\draw (0,0) -- (0,1.5);
						\stub{.9}
						% 4
						\draw[dashed] (0,.9) -- (1,.9);
						\begin{scope}[shift={(1,.9)}]
							\draw (0,0) -- (0,1);
						\end{scope}
					\end{scope}
					\stub{.5}
					% 5
					\draw[dashed] (0,.5) -- (3,.5);
					\begin{scope}[shift={(3,.5)}]
						\draw (0,0) -- (0,2);
					\end{scope}
				\end{scope}
				\stub{.5}
				% 6
				\draw[dashed] (0,.5) -- (5,.5);
				\begin{scope}[shift={(5,.5)}]
					\draw (0,0) -- (0,4);
					\stub{3.5}
					\stub{2.5}
					\stub{1}
				\end{scope}

				\node[anchor=north] at (2.5,0) {$n = 6$};
			\end{scope}

			% \n/\x/\shift = 10/4/1
				\begin{scope}[shift={(1,-13.5)}]
					% root
					\draw (0,0) -- (0,2);
					\stub{1.5}
					% 1
					\draw[dashed] (0,1.5) -- (1,1.5);
					\begin{scope}[shift={(1,1.5)}]
						\draw (0,0) -- (0,1.5);
						\stub{1.2}
						% 2
						\draw[dashed] (0,1.2) -- (1,1.2);
						\begin{scope}[shift={(1,1.2)}]
							% 3
							\draw (0,0) -- (0,1.5);
							\stub{.9}
							% 4
							\draw[dashed] (0,.9) -- (1,.9);
							\begin{scope}[shift={(1,.9)}]
								\draw (0,0) -- (0,1);
							\end{scope}
						\end{scope}
						\stub{.5}
						% 5
						\draw[dashed] (0,.5) -- (3,.5);
						\begin{scope}[shift={(3,.5)}]
							% 6
							\draw (0,0) -- (0,2);
						\end{scope}
					\end{scope}
					\stub{.5}
					% 7
					\draw[dashed] (0,.5) -- (5,.5);
					\begin{scope}[shift={(5,.5)}]
						\draw (0,0) -- (0,4);
						\stub{3.5}
						% 8
						\draw[dashed] (0,3.5) -- (1,3.5);
						\begin{scope}[shift={(1,3.5)}]
							\draw (0,0) -- (0,2);
						\end{scope}
						\stub{2.5}
						% 9
						\draw[dashed] (0,2.5) -- (2,2.5);
						\begin{scope}[shift={(2,2.5)}]
							\draw (0,0) -- (0,1);
						\end{scope}
						\stub{1}
						% 10
						\draw[dashed] (0,1) -- (3,1);
						\begin{scope}[shift={(3,1)}]
							\draw (0,0) -- (0,1);
							\stub{1}
							% 11
							\draw[dashed] (0,1) -- (1,1);
							\begin{scope}[shift={(1,1)}]
								\draw (0,0) -- (0,1);
							\end{scope}
						\end{scope}
					\end{scope}
					\node[anchor=north] at (4,0) {$n = 10$};
				\end{scope}
		\end{tikzpicture}
		\caption[Sequential construction of a chronological tree]{Sequential construction of the chronological tree from the sequence of sticks of Figure~\ref{fig:sequential-construction-sticks}: as long as there is a stub available, we graft the next stick at the highest one. At $n=10$ the construction is complete (there is no more stub available) and the next stick will therefore start the next tree in the forest.}
	\label{fig:sequential-construction}
\end{figure}

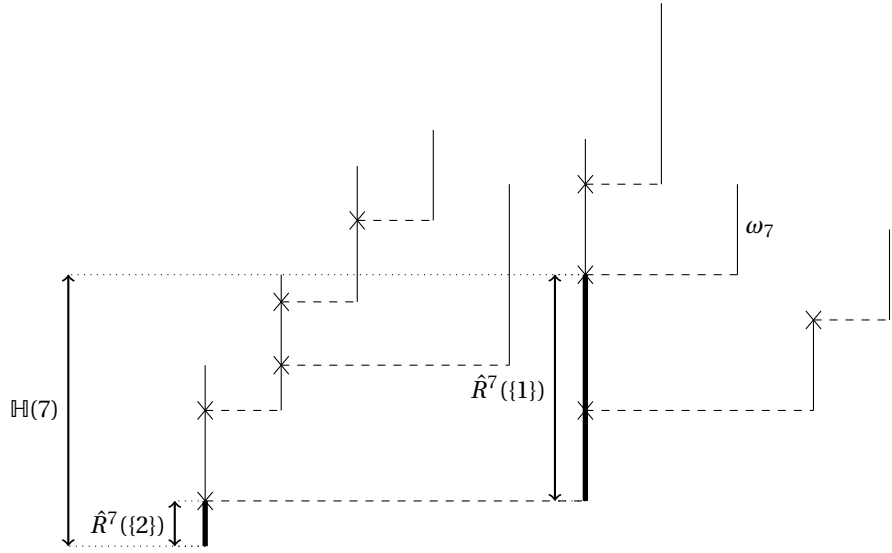
\begin{figure}[htbp]
	\centering
		\begin{tikzpicture}[yscale=1.2]
			\draw (0,0) coordinate (y0d) -- (0,2);
			\stub{1.5}
			% 1
			\draw[dashed] (0,1.5) -- (1,1.5);
			\begin{scope}[shift={(1,1.5)}]
				\draw (0,0) -- (0,1.5);
				\stub{1.2}
				% 2
				\draw[dashed] (0,1.2) -- (1,1.2);
				\begin{scope}[shift={(1,1.2)}]
					% 3
					\draw (0,0) -- (0,1.5);
					\stub{.9}
					% 4
					\draw[dashed] (0,.9) -- (1,.9);
					\begin{scope}[shift={(1,.9)}]
						\draw (0,0) -- (0,1);
					\end{scope}
				\end{scope}
				\stub{.5}
				% 5
				\draw[dashed] (0,.5) -- (3,.5);
				\begin{scope}[shift={(3,.5)}]
					% 6
					\draw (0,0) -- (0,2);
				\end{scope}
			\end{scope}
			\laststub{.5}
			% 7
			\draw[dashed] (0,.5) coordinate (y0u) -- (5,.5) coordinate (y1d);
			\begin{scope}[shift={(5,.5)}]
				\draw (0,0) -- (0,4);
				\stub{3.5}
				% 8
				\draw[dashed] (0,3.5) -- (1,3.5);
				\begin{scope}[shift={(1,3.5)}]
					\draw (0,0) -- (0,2);
				\end{scope}
				\laststub{2.5}
				% 9
				\draw[dashed] (0,2.5) coordinate (y1u) -- (2,2.5);
				\begin{scope}[shift={(2,2.5)}]
					\draw (0,0) -- (0,1) node [midway, right] {$\omega_7$};
				\end{scope}
				\stub{1}
				% 10
				\draw[dashed] (0,1) -- (3,1);
				\begin{scope}[shift={(3,1)}]
					\draw (0,0) -- (0,1);
					\stub{1}
					% 11
					\draw[dashed] (0,1) -- (1,1);
					\begin{scope}[shift={(1,1)}]
						\draw (0,0) -- (0,1);
					\end{scope}
				\end{scope}
			\end{scope}

			\draw [dotted] (y0d) -- ++ (-.4,0) coordinate (y0dp);
			\draw [dotted] (y0u) -- ++ (-.4,0) coordinate (y0up);
			\draw [<->, thick] (y0up) -- (y0dp) node [midway, left] {$\hat R^7(\{2\})$};

			\draw [dotted] (y1d) -- ++ (-.4,0) coordinate (y1dp);
			\draw [dotted] (y1u) -- ++ (-.4,0) coordinate (y1up);
			\draw [<->, thick] (y1up) -- (y1dp) node [midway, left] {$\hat R^7(\{1\})$};

			\draw [dotted] (y0d) -- ++ (-1.8,0) coordinate (y0dp);
			\draw [dotted] (y1u) -- (y0dp |- y1u) coordinate (y1up);
			\draw [<->, thick] (y1up) -- (y0dp) node [midway, left] {$\H(7)$};
		\end{tikzpicture}
	\caption[Spine decomposition]{Spine decomposition of the individual $7$: $\Pi(7) = \hat R^7(\{2\}) \epsilon_0 + \hat R^7(\{1\}) \epsilon_1$ where $\hat R^7(\{k\})$ is the age of the $k$th ancestor of $7$ when giving birth to the next individual on the spine. In particular, $\H(7) = \hat R^7(\{1\}) + \hat R^7(\{2\}) = \lvert \Pi(7) \rvert$ expressing that the chronological height of $7$ is obtained by summing up the chronological contribution of each ancestor on her spine.}
	\label{fig:y-spine}
\end{figure}

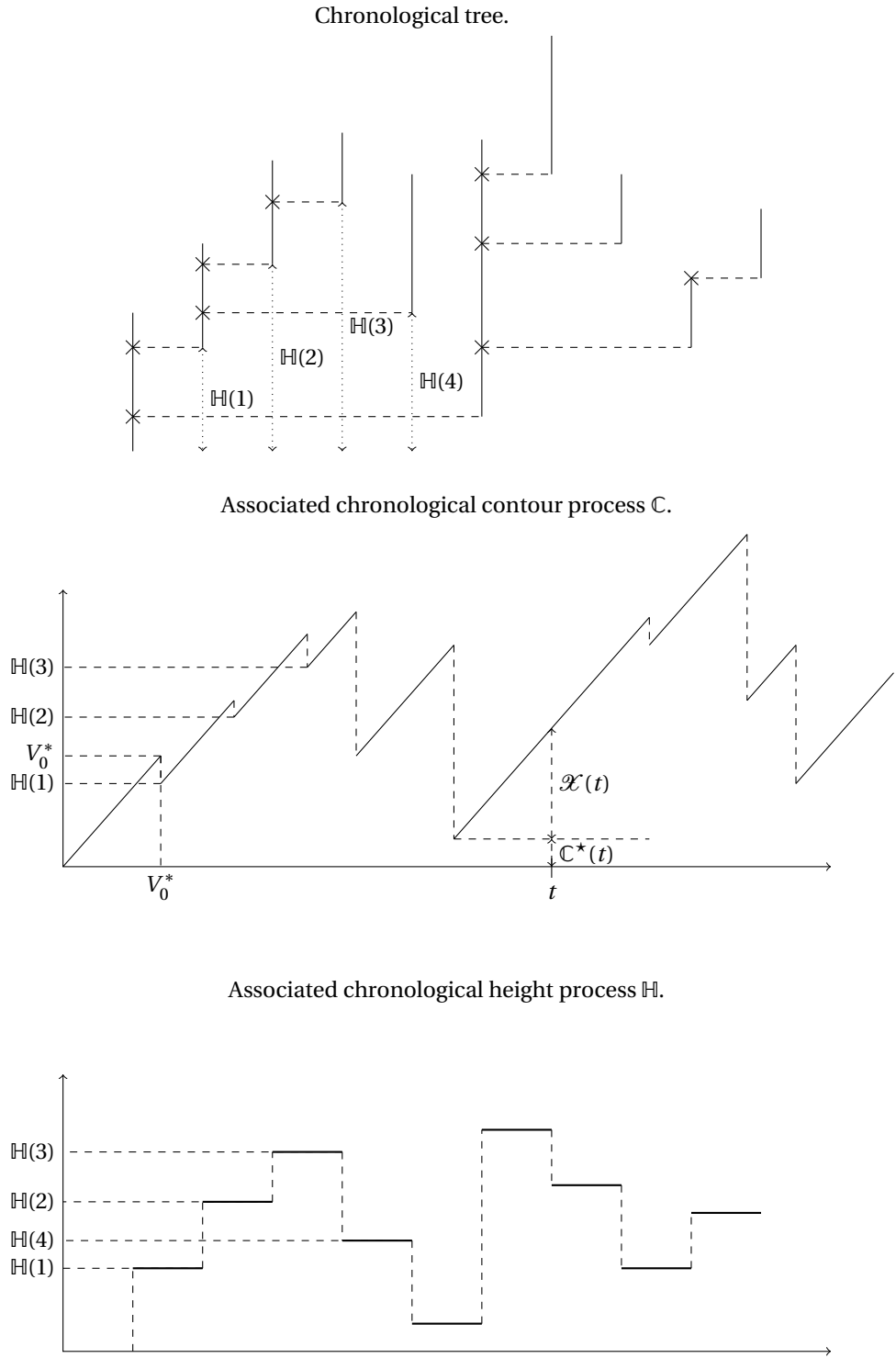
\begin{figure}[htbp]
	\centering
		\begin{tikzpicture}
			% root
			\coordinate (orig) at (0,0);
			\node[anchor=south] at (4,6) {Chronological tree.};
			\draw (0,0) -- (0,2);
			\stub{1.5}
			% 1
			\draw[dashed] (0,1.5) -- (1,1.5);
			\begin{scope}[shift={(1,1.5)}]
				\draw [dotted, <->] (0,0) -- (0,0 |- orig) node [midway, right] {$\H(1)$};
				\draw (0,0) -- (0,1.5);
				\stub{1.2}
				% 2
				\draw[dashed] (0,1.2) -- (1,1.2);
				\begin{scope}[shift={(1,1.2)}]
					\draw [dotted, <->] (0,0) -- (0,0 |- orig) node [midway, right] {$\H(2)$};
					% 3
					\draw (0,0) -- (0,1.5);
					\stub{.9}
					% 4
					\draw[dashed] (0,.9) -- (1,.9);
					\begin{scope}[shift={(1,.9)}]
						\draw [dotted, <->] (0,0) -- (0,0 |- orig) node [midway, right] {$\H(3)$};
						\draw (0,0) -- (0,1);
					\end{scope}
				\end{scope}
				\stub{.5}
				% 5
				\draw[dashed] (0,.5) -- (3,.5);
				\begin{scope}[shift={(3,.5)}]
					\draw [dotted, <->] (0,0) -- (0,0 |- orig) node [midway, right] {$\H(4)$};
					% 6
					\draw (0,0) -- (0,2);
				\end{scope}
			\end{scope}
			\stub{.5}
			% 7
			\draw[dashed] (0,.5) -- (5,.5);
			\begin{scope}[shift={(5,.5)}]
				\draw (0,0) -- (0,4);
				\stub{3.5}
				% 8
				\draw[dashed] (0,3.5) -- (1,3.5);
				\begin{scope}[shift={(1,3.5)}]
					\draw (0,0) -- (0,2);
				\end{scope}
				\stub{2.5}
				% 9
				\draw[dashed] (0,2.5) -- (2,2.5);
				\begin{scope}[shift={(2,2.5)}]
					\draw (0,0) -- (0,1);
				\end{scope}
				\stub{1}
				% 10
				\draw[dashed] (0,1) -- (3,1);
				\begin{scope}[shift={(3,1)}]
					\draw (0,0) -- (0,1);
					\stub{1}
					% 11
					\draw[dashed] (0,1) -- (1,1);
					\begin{scope}[shift={(1,1)}]
						\draw (0,0) -- (0,1);
					\end{scope}
				\end{scope}
			\end{scope}
			
			\begin{scope}[shift={(-1,-6)}, yscale=0.8]
				\node at (5.5, 6.5) {Associated chronological contour process $\C$.};
				\draw[<->] (11,0) -- (0,0) -- (0,5);
				\path (0,0) \foreach \u/\d/\n in
				{2/.5/0,
				1.5/.3/1,
				1.5/.6/2,
				1/2.6/3,
				2/3.5/4,
				4/.5/5,
				2/3/6,
				1/2.5/7,
				2/3.5/8}
				{coordinate (d\n) -- ++ (.7*\u,\u) coordinate (u\n) -- ++ (0,-\d)};
				\draw[name path=C] \foreach \m/\n in {0,1,2,...,8} {(d\m) -- (u\m)};
				\draw[dashed] \foreach \m/\n in
				{0/1,
				1/2,
				2/3,
				3/4,
				4/5,
				5/6,
				6/7,
				7/8}
				{(u\m) -- (d\n)};
				\foreach \n in {1,2,3} {\draw[dashed] (d\n) -- (0,0 |- d\n) node [left] {$\H(\n)$};}
				\draw[dashed] (u0) -- (0,0 |- u0) node [left] {$V^*_0$};
				\draw[dashed] (u0) -- (u0 |- 0,0) node [below] {$V^*_0$};
				
				\coordinate (t) at ($(d5 |- 0,0)!0.5!(d6 |- 0,0)$);
				\coordinate (baseline) at (t |- d5);
				\draw [dashed] (d5) -- (d6 |- d5);
				\draw ($(t)+(0,.2)$) -- ++ (0,-.4) node [below] {$t$};
				\path [name path=tup] (t) -- ++ (0,5);
				\path [name intersections={of=C and tup,by=mtmp}];
				\draw [dashed, <->] (t) -- (baseline) node [midway, right] {$\C^\star(t)$};
				\draw [dashed, <->] (baseline) -- (mtmp) node [midway, right] {$\X(t)$};
			\end{scope}

			\begin{scope}[shift={(-1,-13)}, yscale=0.8]
				\node at (5.5, 6.5) {Associated chronological height process $\H$.};
				\draw[<->] (11,0) -- (0,0) -- (0,5);
				\path (0,0) coordinate (v0) \foreach \h/\n in
				{1.5/1,
				1.2/2,
				.9/3,
				-1.6/4,
				-1.5/5,
				3.5/6,
				-1/7,
				-1.5/8,
				1/9,
				-2.5/10}
				{ -- ++ (1, 0) coordinate (u\n) -- ++ (0,\h) coordinate (v\n)};
				\foreach \n in {1,2,...,9} {\draw[dashed] (u\n) -- (v\n);}
				\foreach \n/\m in {1/2,2/3,3/4,4/5,5/6,6/7,7/8,8/9} {\draw[thick] (v\n) -- (u\m);}
				\draw[thick] (v9) -- ++ (1,0);
				\foreach \m/\n in {2/1,3/2,4/3,5/4}{\draw [dashed] (u\m) -- (0,0 |- u\m) node [left] {$\H(\n)$};}
			\end{scope}
		\end{tikzpicture}
	\caption[Chronological tree and associated processes]{Chronological height and contour processes associated to the chronological tree constructed from the sequence of sticks of Figure~\ref{fig:sequential-construction-sticks}.}
	\label{fig:chronological-processes}
\end{figure}

\begin{figure}[htbp]
	\centering
		\begin{tikzpicture}
			
			% root
			\node[anchor=south] at (0,2.5) {\begin{minipage}{0.8\textwidth}Genealogical tree associated with the chronological tree of Figure~\ref{fig:chronological-processes}.\end{minipage}};
			\begin{scope}[shift={(0,0)}]
				\tikzset{grow=up, level distance=6mm, level 3/.style={sibling distance=7mm}, level 2/.style={sibling distance=15mm}, level 1/.style={sibling distance=20mm}}
				\coordinate [fill] circle (2pt)
					child { [fill] circle (2pt)
						child { [fill] circle (2pt)
							child{ [fill] circle (2pt)
								child{ [fill] circle (2pt)}
						}
							child{ [fill] circle (2pt)}
							child{ [fill] circle (2pt)}
						}
						child { [fill] circle (2pt)
							child { [fill] circle (2pt)}
							child { [fill] circle (2pt)
								child{ [fill] circle (2pt)}
							}
						}
					}
				;
			\end{scope}

			\begin{scope}[shift={(-5.5,-5)}, yscale=.8]
				\node at (5.5, 5) {Associated genealogical height process $\cH$.};
				\draw[<->] (11,0) -- (0,0) -- (0,5);
				\path (0,0) coordinate (v0) \foreach \h/\n in
				{1/1,
				1/2,
				1/3,
				1/4,
				-2/5,
				-1/6,
				1/7,
				1/8,
				-1/9,
				1/10,
				-1/11,
				1/12,
				1/13}
				{ -- ++ (1, 0) coordinate (u\n) -- ++ (0,\h) coordinate (v\n)};
				\foreach \n in {1,2,...,9} {\draw[dashed] (u\n) -- (v\n);}
				\foreach \n/\m in {1/2,2/3,3/4,4/5,5/6,6/7,7/8,8/9} {\draw[thick] (v\n) -- (u\m);}
				\draw[thick] (v9) -- ++ (1,0);
			\end{scope}
			
			\begin{scope}[shift={(-5.5,-10)}, yscale=.8]
				\node at (5.5, 3.5) {Associated Lukasiewicz path $S$.};
				\draw[<->] (11,0) -- (0,0) -- (0,4);
				\path (0,0) coordinate (v0) \foreach \h/\n in
				{1/1,
				1/2,
				0/3,
				-1/4,
				-1/5,
				2/6,
				-1/7,
				-1/8,
				0/9,
				-1/10}
				{ -- ++ (1, 0) coordinate (u\n) -- ++ (0,\h) coordinate (v\n)};
				\foreach \n in {1,2,...,10} {\draw[dashed] (u\n) -- (v\n);}
				\foreach \n/\m in {1/2,2/3,3/4,4/5,5/6,6/7,7/8,8/9,9/10} {\draw[thick] (v\n) -- (u\m);}
				\draw[thick] (v10) -- ++ (1,0);
			\end{scope}
		\end{tikzpicture}
	\caption[Genealogical tree and associated processes]{The genalogical tree of the chronological tree of Figure~\ref{fig:chronological-processes}, together with the genealogical processes $S$ and $\cH$. The genealogical tree is obtained by resizing all the sticks to unit size and putting all the atoms at one. The genealogical height process is then obtained as before, but from the genealogical tree.}
	\label{fig:genealogical-processes}
\end{figure}

\begin{landscape}
	\begin{figure}[htbp]
		\centering
			\begin{tikzpicture}
				% axes
				\draw [->] (0,-2) -- (0,7.5);
				\draw [->] (-.2,0) -- (18,0);
				% S
				\path (0,0)
				\foreach \n/\d/\j in {
				1/1/3.3,
				2/2/1,
				3/1/6,
				4/4/1.5,
				5/2/1.5,
				6/2/4,
				7/2/1.5,
				8/1/4.5,
				9/3/1}
				{coordinate (l\n) -- ++ (\d, -\d) coordinate (r\n) -- ++ (0,\j)};
				\draw [name path=S] \foreach \n in {1,...,9} {(l\n) -- (r\n)};
				\draw [dashed] \foreach \n/\o in {1/2,2/3,3/4,4/5,5/6,6/7,7/8,8/9} {(r\n) -- (l\o)};
				\coordinate (m) at ($(l5)!0.5!(r5)$);
				\draw[dashed] (m) -- (m |- 0,0) node [below] {$m$};
				\draw [dashed] (0,0 |- m) node [left] (S) {$S(m)$} -- ++ (16,0);
				\draw [dashed] (0,0 |- l4) node [left] (sup) {$\sup_{[0,m]} S$} -- ++ (16,0);
				\draw [dashed] (r8) -- (r8 |- 0,0) node [below] {$T(T^{-1}(m))$};
				\draw [<->, dashed] ($(r8 |- S)+(.7,0)$) -- ($(r8 |- sup)+(.7,0)$) node [midway, right] {$X(m)$};
				\draw [dashed] ($(l7 |- l9)+(-1,0)$) -- (l9);
				\draw [<->, dashed] ($(l9)-(.5,0)$) -- ($(l9 |- sup)-(.5,0)$) node [midway, left] {$\Delta \cZ (T^{-1}(m))$};
				\draw [dashed] ($(l7)+(-1,0)$) -- (l9 |- l7);
				\draw [<->, dashed] ($(l7 |- l9)+(-1,0)$) -- ($(l7)+(-1,0)$) node [midway, left] {$\Delta \cZ^m \circ (\cZ^m)^{-1} (X(m))$};
			\end{tikzpicture}
		\caption[Illustration of two key relations]{Proof of the relation $\Theta_{T^{-1}(m)}(W) = \Theta_{(\cZ^m)^{-1} \circ X(m)}(W^m)$ for Lemma~\ref{prop:discrete-snake}. By definition, $\Theta_{T^{-1}(m)}(W)$ is computed from $S$ shifted at time $T(T^{-1}(m))$ which is the first ascending ladder height time after $m$. In terms of the process shifted at time $m$, this means that we have to wait for $T(T^{-1}(m)) - m$ which corresponds to skipping $(\cZ^m)^{-1} \circ X(m)$ excursions. On the picture, we have for instance $(\cZ^m)^{-1} \circ X(m) = 4$. Indeed, $\cZ^{-1}(\ell)$ is by definition the number of excursions needed to reach level $\ell$, and $X(m)$ is precisely the depth of the ``valley'' in which $m$ sits and from which the process shifted at time $m$ needs to exit in order to coincide with the process shifted at time $T(T^{-1}(m))$.}
		\label{fig:discrete-snake}
	\end{figure}
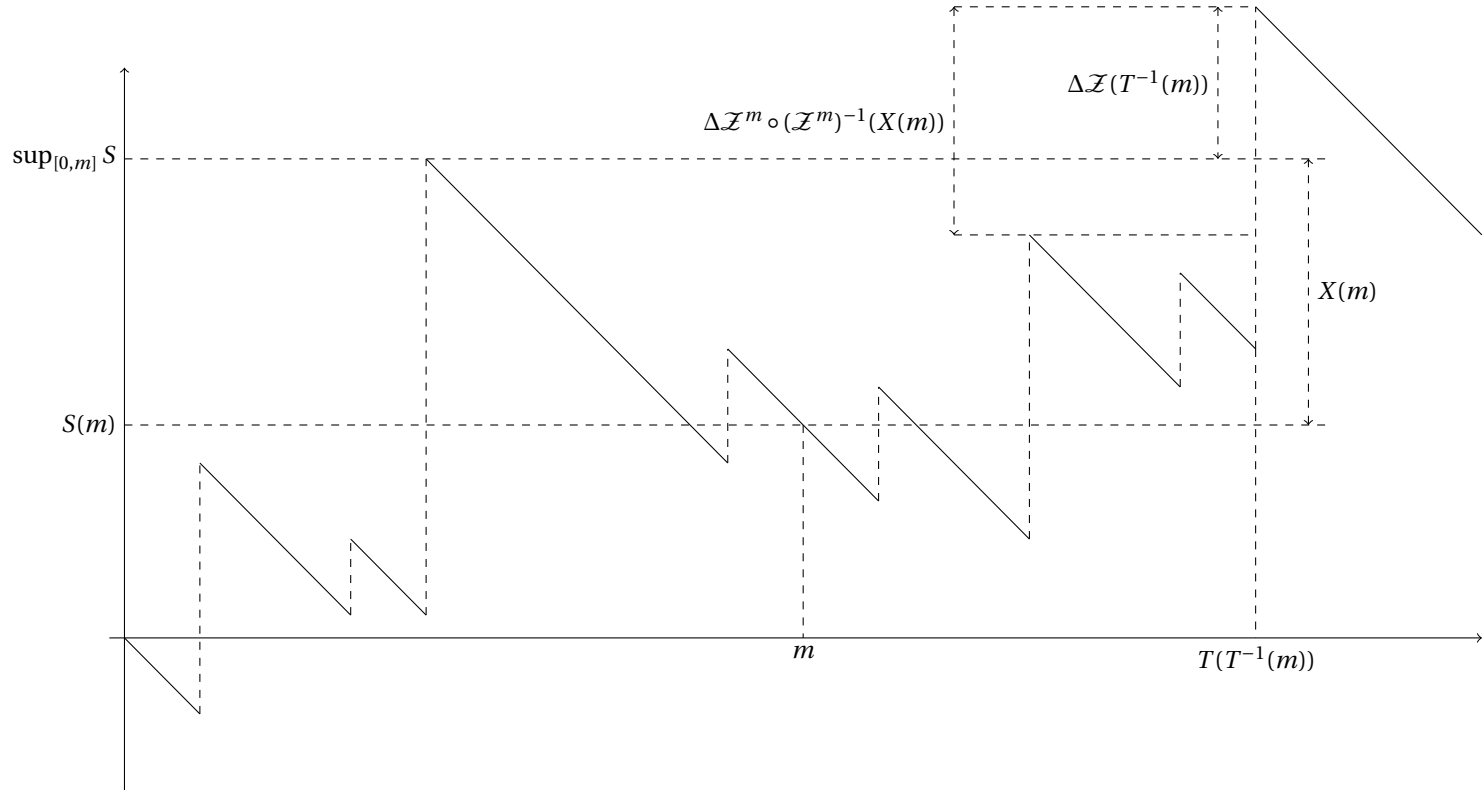
\end{landscape}

\end{document}